\newtheorem{theorem}{Theorem}[section]
\newtheorem{lemma}[theorem]{Lemma}
\newtheorem{proposition}[theorem]{Proposition}
\newtheorem{remark}[theorem]{Remark}
\newtheorem{corollary}[theorem]{Corollary}
\newcommand{\R}{\mathbb{R}}
\newcommand{\E}{\mathrm{E}}
\renewcommand{\P}{\mathrm{P}}
\title[Local asymptotic properties for CIR]{Local asymptotic properties for Cox-Ingersoll-Ross process with discrete observations}
\date{\today}
\author[Mohamed Ben Alaya, Ahmed Kebaier and Ngoc Khue Tran]{Mohamed Ben Alaya, Ahmed Kebaier and Ngoc Khue Tran}
\address{Mohamed Ben Alaya, Laboratoire De Math\'ematiques Rapha{\"{e}}l Salem, UMR 6085, Universit\'e De Rouen, Avenue de L'Universit\'e Technop\^ole du Madrillet, 76801 Saint-Etienne-Du-Rouvray, France}
\email{mohamed.ben-alaya@univ-rouen.fr}
\address{Ahmed Kebaier, Universit\'e  Sorbonne Paris Nord, LAGA, CNRS (UMR 7539), F-93430 Villetaneuse, France}
\email{kebaier@math.univ-paris13.fr}
\address{Ngoc Khue Tran, Department of Natural Science Education - Pham Van Dong University, 509 Phan Dinh Phung, Quang Ngai City, Quang Ngai, Vietnam}
\email{tnkhueprob@gmail.com}
\thanks{This research is supported by Laboratory of Excellence MME-DII, Grant no. ANR11LBX-0023-01 (\url{http://labex-mme-dii.u-cergy.fr/}).  Ahmed Kebaier benefited from the support of the chair Risques Financiers, Fondation du Risque. Ngoc Khue Tran wishes to thank Universit\'e Sorbonne Paris Nord for the hospitality. This research is funded by Vietnam National Foundation for Science and Technology Development (NAFOSTED) under grant number 101.03-2017.316.}
\subjclass[2010]{60H07; 65C30; 62F12; 62M05}
\keywords{Cox-Ingersoll-Ross process; local asymptotic (mixed) normality property; local asymptotic quadraticity property; Malliavin calculus; parametric estimation; square root coefficient}
\begin{document}
\maketitle
\begin{abstract} 
In this paper, we consider a one-dimensional Cox-Ingersoll-Ross (CIR) process whose drift coefficient depends on unknown parameters. Considering the process discretely observed at high frequency, we prove the local asymptotic normality property in the subcritical case, the local asymptotic quadraticity in the critical case, and the local asymptotic mixed normality property in the supercritical case. To obtain these results, we use the Malliavin calculus techniques developed recently for CIR process by Al\`os et {\it al.} \cite{AE08} and Altmayer et {\it al.} \cite{AN14} together with the $L^p$-norm estimation for positive and negative moments of the CIR process obtained by Bossy et {\it al.} \cite{BD07} and Ben Alaya et {\it al.} \cite{BK12,BK13}. In this study, we require the same conditions of high frequency $\Delta_n\rightarrow 0$ and infinite horizon $n\Delta_n\rightarrow\infty$ as in the case of ergodic diffusions with globally Lipschitz coefficients studied earlier by Gobet \cite{G02}. However, in the non-ergodic cases, additional assumptions on the decreasing rate of $\Delta_n$ are required due to the fact that the square root diffusion coefficient of the CIR process is not regular enough. Indeed, we assume $n\Delta_n^{3}\to 0$ for the critical case and $\Delta_n^{2}e^{-b_0n\Delta_n}\to 0$ for the supercritical case.
\end{abstract}

\section{Introduction}

On a complete probability space $(\widehat{\Omega}, \widehat{\mathcal{F}},  \widehat{\P})$, we consider a Cox-Ingersoll-Ross (CIR) process $X^{a,b}=(X_t^{a,b})_{t \geq 0}$ in $\R$ defined by 
\begin{equation}\label{c2eq1}
X_t^{a,b}=x_0+\int_0^t (a-bX_s^{a,b})ds +\int_0^t\sqrt{2\sigma X_s^{a,b}}dB_s,
\end{equation}
where $X_0^{a,b}=x_0>0$ is a given initial condition and we assume that $a\geq \sigma>0$. Here, $B=(B_t)_{t \geq 0}$ is a standard Brownian motion. The parameters $(a,b)\in \Theta\times \Sigma$ are unknown to be estimated, and $\Theta$ and $\Sigma$ are closed intervals of $\R^{\ast}_+$ and $\R$, respectively, where $\R^{\ast}_+=\R_+\setminus\{0\}$. Let $\{\widehat{\mathcal{F}}_t\}_{t\geq 0}$ denote the natural filtration generated by $B$. We denote by $\widehat{\P}^{a,b}$ the probability measure induced by the CIR process $X^{a,b}$ on the canonical space $(C(\R_{+},\R),\mathcal{B}(C(\R_{+},\R))$ endowed with the natural filtration $\{\widehat{\mathcal{F}}_t\}_{t\geq 0}$. Here $C(\R_{+},\R)$ denotes the set of $\R$-valued continuous functions defined on $\R_{+}$, and $\mathcal{B}(C(\R_{+},\R))$ is its Borel $\sigma$-algebra. We denote by $\widehat{\E}^{a,b}$ the expectation with respect to (w.r.t.) $\widehat{\P}^{a,b}$. Let $\overset{\widehat{\P}^{a,b}}{\longrightarrow}$, $\overset{\mathcal{L}(\widehat{\P}^{a,b})}{\longrightarrow}$, $\widehat{\P}^{a,b}$-a.s., and $\overset{\mathcal{L}}{=}$ denote the convergence in $\widehat{\P}^{a,b}$-probability, in $\widehat{\P}^{a,b}$-law, in $\widehat{\P}^{a,b}$-almost surely, and equality in law, respectively. $\ast$ denotes the transpose.

By applying It\^o's formula to \eqref{c2eq1}, the unique strong solution of the equation \eqref{c2eq1} which is non-negative is given by
\begin{equation}
X_t^{a,b}=x_0e^{-bt}+a\int_0^te^{-b(t-s)}ds+\int_0^te^{-b(t-s)}\sqrt{2\sigma X_s^{a,b}}dB_s,
\end{equation}
for all $t\geq 0$. Notice that condition $a\geq \sigma>0$ guarantees that the process $X^{a,b}$ is always positive, i.e., $\P(X_t^{a,b}>0,\ \forall t\geq 0)=1$. 

Recall that the CIR process is extensively used in mathematical finance to model the evolution of short-term interest rates or to describe the dynamic of the stochastic volatility in the Heston model. The CIR process appears in the financial literature also as part of the class of affine processes, and a lot of interesting material can be found in this way, e.g. in the work of Teichmann et {\it al.} \cite{CFMT11},
Duffie, Filipovi{\'c} and Schachermayer \cite{DFS03}, Kallsen \cite{K06},  Keller-Ressel and Mijatovi{\'c} \cite{KM12} and other authors.

A fundamental concept in asymptotic theory of statistics is the local asymptotic normality (LAN) property introduced by Le Cam \cite{LC60} and then extended by Jeganathan \cite{JP82} to the local asymptotic mixed normality (LAMN) property. The local asymptotic quadraticity (LAQ) property was introduced by e.g. \cite{H14, CY90}. Initiated by Gobet \cite{G01}, the techniques of Malliavin calculus can be used to analyze the log-likelihood ratio of the discrete observation of continuous diffusion processes. Concretely, Gobet \cite{G01, G02} obtained the LAMN and LAN properties respectively for multidimensional elliptic diffusions and ergodic diffusions on the basis of discrete observations at high frequency. In the presence of jumps, several cases have been largely investigated, see e.g. A\"it-Sahalia and Jacod \cite{AJ07}, Kawai \cite{K13}, Cl\'ement et {\it al.} \cite{CDG14, CG15}, Kohatsu-Higa et {\it al.} \cite{KNT14, KNT15}, and Tran \cite{T15}. However, all these results deal with the stochastic differential equations whose coefficients are continuously differentiable and satisfy a global Lipschitz condition. The case where the coefficient functions of the model do not satisfy these standard assumptions, for instance the square root diffusion function in the CIR model which is neither differentiable at $0$ nor globally Lipschitz, still remains an open problem when the model is discretely observed at high frequency.

On the other hand, notice that most existing research works on statistics for CIR process and more generally for affine diffusions mainly focus on parameter estimation based on continuous observations. More precisely, Overbeck \cite{O98} showed the asymptotic properties of maximum likelihood estimator (MLE) as well as the LAN, LAMN and LAQ properties for CIR process in the subcritical (ergodic), critical, and supercritical (non-ergodic) submodels. Later, Ben Alaya and Kebaier \cite{BK12, BK13} show various asymptotic properties of MLE associated to the partial and global drift parameters of the CIR process in both ergodic and non-ergodic cases. Recently, Barczy and Pap \cite{BP16} have studied the asymptotic properties of MLE for Heston models. Later on, Benke and Pap \cite{BP15} have proved the LAN, LAMN and LAQ properties for Heston models. More recently, Barczy et {\it al.} \cite{BBKP16,BBKP162} have studied the asymptotic properties of MLE for jump-type Heston models and jump-type CIR process. Some results on parameter estimation based on discrete observations of CIR process can be found e.g. in \cite{OR97,BK13}. In \cite{OR97}, Overbeck and Ryd\'en proved the LAN property for the ergodic CIR process based on discrete observations at low frequency and did one-step-improvement in the sense of Le Cam \cite{LC60}.

However, as mentioned just above, it seems that the validity of the LAN, LAMN and LAQ properties for CIR process in ergodic and non-ergodic cases on the basis of discrete observations in a high frequency context has never been addressed in the literature. One difficulty comes from the fact that its diffusion coefficient is the square root function. Motivated by this fact, the main objective of this paper is to derive the LAN property in the subcritical case, the LAQ property in the critical case, and the LAMN property in the supercritical case for $X^{a,b}$ based on discrete observations. 

Let us now recall the notion of the LAQ, LAMN and LAN properties in our setting. Given the process $X^{a,b}=(X_t^{a,b})_{t \geq 0}$ and $n\geq 1$, we consider a discrete observation at equidistant and deterministic times $t_k=k \Delta_n$, $k \in \{0,\ldots,n\}$ of the process $X^{a,b}$, which is denoted by $X^{n,a,b}=(X_{t_0}^{a,b}, X_{t_1}^{a,b},\ldots,X_{t_n}^{a,b})$, where $\Delta_n\leq 1$ for all $n\geq 1$. We suppose that the high-frequency and infinite horizon conditions satisfy: $\Delta_n\rightarrow 0$ and $n\Delta_n\rightarrow\infty$ as $n\rightarrow\infty$. We denote by $\P^{a,b}_n$ and $p_n(\cdot;(a,b))$ the probability law and the density of the random vector $X^{n,a,b}$, respectively. 

For fixed $(a_0,b_0)\in \Theta\times \Sigma$, we consider a discrete observation of the process $X^{a_0,b_0}$ given by $X^{n,a_0,b_0}=(X_{t_0}^{a_0,b_0}, X_{t_1}^{a_0,b_0},\ldots,X_{t_n}^{a_0,b_0})$. For $(u,v)\in\R^2$, we set $a_n:=a_0+u\varphi_{1,n}(a_0,b_0)$ and $b_n:=b_0+v\varphi_{2,n}(a_0,b_0)$, where we assume that $\varphi_{1,n}(a_0,b_0)$ and $\varphi_{2,n}(a_0,b_0)$ tend to zero as $n\to\infty$. Suppose that there exist a $\R^2$-valued random vector $U(a_0,b_0)$ and a (random) matrix $I(a_0,b_0)$ such that for all $z=(u,v)^{\ast}\in\R^2$, as $n\to\infty$,
\begin{align}\label{LAQ}
\log\dfrac{d\P_n^{a_n,b_n}}{d\P^{a_0,b_0}_n}(X^{n,a_0,b_0})\overset{\mathcal{L}(\widehat{\P}^{a_0,b_0})}{\longrightarrow} z^{\ast}U(a_0,b_0)-\dfrac{1}{2}z^{\ast} I(a_0,b_0)z.
\end{align}
Then, we say that
\begin{itemize}
	\item[\textnormal a)]  The LAQ property holds at $(a_0,b_0)$ with rates of convergence $(\varphi_{1,n}^{-1}(a_0,b_0),\varphi_{2,n}^{-1}(a_0,b_0))$ and random matrix $I(a_0,b_0)$ if further for all $z=(u,v)^{\ast}\in\R^2$, we have 
	$$
	\widehat{\E}^{a_0,b_0}\Big[e^{z^{\ast}U(a_0,b_0)-\frac{1}{2}z^{\ast} I(a_0,b_0)z}\Big]=1.
	$$
	\item[\textnormal b)] The LAMN property holds at $(a_0,b_0)$ with rates of convergence $(\varphi_{1,n}^{-1}(a_0,b_0),\varphi_{2,n}^{-1}(a_0,b_0))$ and asymptotic random Fisher information matrix $I(a_0,b_0)$ if further we can write $U(a_0,b_0)=I(a_0,b_0)^{\frac{1}{2}}\mathcal{N}\left(0,I_2\right)$, where $\mathcal{N}(0,I_2)$ denotes a centered $\R^2$-valued Gaussian random vector with identity covariance matrix $I_2$, which is independent of the conditional covariance matrix $I(a_0,b_0)$.\\	
	In particular, when $I(a_0,b_0)$ is deterministic, i.e., $U(a_0,b_0)=\mathcal{N}\left(0,I(a_0,b_0)\right)$, we say that the LAN property holds at $(a_0,b_0)$ with rates of convergence $(\varphi_{1,n}^{-1}(a_0,b_0),\varphi_{2,n}^{-1}(a_0,b_0))$ and asymptotic Fisher information matrix $I(a_0,b_0)$.
\end{itemize}
For the full power of the LAN, LAMN or LAQ property, one should consider arbitrary bounded sequences $(u_n,v_n)_{n\geq 1}$ in $\R^2$ when the local scale at $(a_0,b_0)$ is denoted by $(\varphi_n(a_0,b_0))_{n\geq 1}=(\varphi_{1,n}(a_0,b_0),\varphi_{2,n}(a_0,b_0))_{n\geq 1}$, that is, $a_n:=a_0+u_n\varphi_{1,n}(a_0,b_0)$ and $b_n:=b_0+v_n\varphi_{2,n}(a_0,b_0)$. For details, we refer the reader to Subsection $7.1$ of H{\"{o}}pfner \cite{H14} or Le Cam and Lo Yang \cite{CY90}.

As we will see, the rates of convergence for LAQ, LAMN and LAN properties for CIR process depend strongly on the drift parameter $b$. Indeed, the value of the drift parameter $b>0$, $b=0$ and $b<0$ determines respectively the subcritical, critical and supercritical cases.

To show the convergence \eqref{LAQ}, one needs to derive an appropriate stochastic expansion of the log-likelihood ratio. For any $t>s$, the law of $X_t^{a,b}$ conditioned on $X_s^{a,b}=x$ admits a positive explicit transition density $p^{a,b}(t-s,x,y)$. From the explicit expression \eqref{densityexplicit} and \eqref{densityexplicit0} below, the transition density $p^{a,b}(t-s,x,y)$ is differentiable w.r.t. $a$ and $b$. Then using Markov property and the mean value theorem, the log-likelihood ratio can be decomposed as
\begin{equation}\label{maindecompo}\begin{split}
&\log\dfrac{d\P_n^{a_n,b_n}}{d\P^{a_0,b_0}_n}(X^{n,a_0,b_0})=\log\dfrac{p_n\left(X^{n,a_0,b_0};\left(a_n,b_n\right)\right)}{p_n\left(X^{n,a_0,b_0};\left(a_0,b_0\right)\right)}\\
&=\log\dfrac{p_n\left(X^{n,a_0,b_0};\left(a_n,b_0\right)\right)}{p_n\left(X^{n,a_0,b_0};\left(a_0,b_0\right)\right)}+\log\dfrac{p_n\left(X^{n,a_0,b_0};\left(a_n,b_n\right)\right)}{p_n\left(X^{n,a_0,b_0};\left(a_n,b_0\right)\right)}\\
&=\sum_{k=0}^{n-1}\log\dfrac{p^{a_n,b_0}}{p^{a_0,b_0}}(\Delta_n,X_{t_k}^{a_0,b_0},X_{t_{k+1}}^{a_0,b_0})+\sum_{k=0}^{n-1}\log\dfrac{p^{a_n,b_n}}{p^{a_n,b_0}}(\Delta_n,X_{t_k}^{a_0,b_0},X_{t_{k+1}}^{a_0,b_0})\\
&=\sum_{k=0}^{n-1}u\varphi_{1,n}(a_0,b_0)\int_0^1\dfrac{\partial_{a}p^{a(\ell),b_0}}{p^{a(\ell),b_0}}(\Delta_n,X_{t_k}^{a_0,b_0},X_{t_{k+1}}^{a_0,b_0})d\ell\\
&\qquad+\sum_{k=0}^{n-1}v\varphi_{2,n}(a_0,b_0)\int_0^1\dfrac{\partial_{b}p^{a_n,b(\ell)}}{p^{a_n,b(\ell)}}(\Delta_n,X_{t_k}^{a_0,b_0},X_{t_{k+1}}^{a_0,b_0})d\ell,
\end{split}
\end{equation}
where $a_n=a_0+u\varphi_{1,n}(a_0,b_0)$, $b_n=b_0+v\varphi_{2,n}(a_0,b_0)$, $a(\ell):=a_0+\ell u\varphi_{1,n}(a_0,b_0)$ and $b(\ell):=b_0+\ell v\varphi_{2,n}(a_0,b_0)$.

Since we are dealing with the CIR process, one way to proceed could be to use an explicit expression for the transition density function 
which is characterized in terms of a non-central chi-squared distribution (see e.g. \cite{CIR85}). That is, for any $t>0$ and $b\neq 0$,
\begin{align}\label{densityexplicit}
p^{a,b}(t,x,y)=\dfrac{-be^{bt}}{\sigma(1-e^{bt})}\left(\dfrac{y}{xe^{-bt}}\right)^{\frac{\nu}{2}}\exp\Big\{\dfrac{b}{\sigma}\dfrac{x+e^{bt}y}{1-e^{bt}}\Big\}I_{\nu}\Big(\dfrac{-2b\sqrt{xy}e^{\frac{b}{2}t}}{\sigma(1-e^{bt})}\Big),
\end{align}
and $b=0$,
\begin{align}\label{densityexplicit0}
p^{a,0}(t,x,y)=\dfrac{1}{\sigma t}\left(\dfrac{y}{x}\right)^{\frac{\nu}{2}}\exp\Big\{-\dfrac{x+y}{\sigma t}\Big\}I_{\nu}\Big(\dfrac{2\sqrt{xy}}{\sigma t}\Big),
\end{align}
where $\nu=\dfrac{a}{\sigma}-1$, and $I_{\nu}$ is the modified Bessel function of the first 
kind of order $\nu>0$ defined by
\begin{align*}
I_{\nu}(x)=\sum_{n=0}^{\infty}\dfrac{1}{n!\Gamma(\nu+n+1)}\left(\dfrac{x}{2}\right)^{2n+\nu},
\end{align*}
for any $x\in\R$. Here $\Gamma(\cdot)$ is the Gamma function defined by $
\Gamma(z)=\int_{0}^{\infty}x^{z-1}e^{-x}dx$ for $z\in\R_{+}$. To use this approach, one needs to well understand the behavior of the logarithm derivatives of the transition density w.r.t. the parameters $a$ and $b$. However, it is not clear if using \eqref{densityexplicit} and \eqref{densityexplicit0} would help  especially since the parameter $a$ appears in the index $\nu$ of the modified Bessel function and we have at least to find  appropriate estimates of several types of increments involving the parameter $a$.

In this paper, for our purpose, we use an alternative strategy based on the Malliavin calculus approach
initiated by Gobet \cite{G01, G02} in order to derive an explicit expression for the logarithm derivatives of the transition density in terms of a conditional expectation of a Skorohod integral. Let us mention here that Malliavin calculus for CIR process is established by Al\`os and Ewald \cite{AE08} and then Malliavin calculus for constant elasticity of variance (CEV) process is developed by Altmayer and Neuenkirch \cite{AN14}. Furthermore, these articles give an explicit expression for the Malliavin derivative of the CIR process which will be useful for our purpose. Later on, the Malliavin differentiablity of Heston stochastic volatility have recently used e.g. in \cite{AN14, DJ06, KY09, P08}.

The main results of this paper are the LAN property in the subcritical case, the LAQ property in the critical case, and the LAMN property in the supercritical case for CIR process. For this, let us formulate the following assumption on the ratio of the coefficients of equation \eqref{c2eq1} we shall work with 
\begin{align*}
{\bf(A)}\qquad\frac{a}{\sigma}>\frac{11+\sqrt{89}}{2}.
\end{align*}
Under condition {\bf(A)} and $b_0>0$, we obtain in Theorem \ref{c2theorem}	the LAN property for subcritical case with rates of convergence $(\varphi_{1,n}^{-1}(a_0,b_0),\varphi_{2,n}^{-1}(a_0,b_0))=(\sqrt{n\Delta_n},\sqrt{n\Delta_n})$. In the critical case, assuming condition {\bf(A)}, $b_0=0$ and that $n\Delta_n^{3}\to 0$ as $n\to\infty$, we prove in Theorem \ref{c2theorem2} the LAQ property holds with rates of convergence $(\varphi_{1,n}^{-1}(a_0,b_0),\varphi_{2,n}^{-1}(a_0,b_0))=(\sqrt{\log(n\Delta_n)},n\Delta_n)$. Finally, in the supercritical case when $a$ is known and $b$ is only unknown parameter, assuming condition {\bf(A)}, $b_0<0$ and $\Delta_n^{2}e^{-b_0n\Delta_n}\to 0$, we derive the validity of the LAMN property for the likelihood at $b_0$ with rate of convergence $\varphi_{2,n}^{-1}(b_0)=e^{-b_0\frac{n\Delta_n}{2}}$.

	When the LAN property holds at $(a_0,b_0)$ with rates of convergence $(\varphi_{1,n}^{-1}(a_0,b_0),\varphi_{2,n}^{-1}(a_0,b_0))$ and asymptotic Fisher information matrix $I(a_0,b_0)$, a sequence of estimators $\{(\widehat{a}_n,\widehat{b}_n)\}_{n\in\mathbb{N}^{\ast}}$ of $(a_0,b_0)$ is said to be asymptotically efficient at $(a_0,b_0)$ in the sense of H\'ajek-Le Cam convolution theorem if as $n\to\infty$,
	$$
	\varphi_n^{-1}(a_0,b_0)\big((\widehat{a}_n,\widehat{b}_n)-(a_0,b_0)\big)^{\ast}\overset{\mathcal{L}(\widehat{\P}^{a_0,b_0})}{\longrightarrow}\mathcal{N}\left(0,I(a_0,b_0)^{-1}\right),
	$$
	where $\varphi_n^{-1}(a_0,b_0):=\textup{diag}(\varphi_{1,n}^{-1}(a_0,b_0),\varphi_{2,n}^{-1}(a_0,b_0))$ is the diagonal matrix. Notice that a sequence of estimators which is asymptotically efficient in the sense of H\'ajek-Le Cam convolution theorem achieves asymptotically the Cram\'er-Rao lower bound $I(a_0,b_0)^{-1}$ for the estimation variance.	

	When the LAMN property holds for the likelihood at $(a_0,b_0)$ with rates of convergence $(\varphi_{1,n}^{-1}(a_0,b_0),\varphi_{2,n}^{-1}(a_0,b_0))$ and asymptotic random Fisher information matrix $I(a_0,b_0)$, convolution and minimax theorems can be applied. On the one hand, the convolution theorem \cite[Corollary 1]{JP82} suggests the notion of asymptotically efficient estimators. That is, a sequence of estimators $\{(\widehat{a}_n,\widehat{b}_n)\}_{n\in\mathbb{N}^{\ast}}$ of the parameter $(a_0,b_0)$ is said to be asymptotically efficient at $(a_0,b_0)$ in the sense of H\'ajek-Le Cam convolution theorem if as $n\to\infty$,
	$$
	\varphi_n^{-1}(a_0,b_0)\big((\widehat{a}_n,\widehat{b}_n)-(a_0,b_0)\big)^{\ast}\overset{\mathcal{L}(\widehat{\P}^{a_0,b_0})}{\longrightarrow}I(a_0,b_0)^{-\frac{1}{2}}\mathcal{N}\left(0,I_2\right),
	$$
	where $I(a_0,b_0)$ and $\mathcal{N}(0,I_2)$ are independent. On the other hand, as a consequence of the minimax theorem \cite[Proposition 2]{JP82}, the lower bound for the asymptotic variance of any estimators is given by $I(a_0,b_0)^{-1}$. For details, we refer the reader to e.g. \cite{JP82,CY90}. It is worth noticing that the estimators constructed from a discretization of the time-continuous MLE given in \cite[Theorem 8 and 9]{BK13}  are asymptotically efficient in the subcritical and critical cases since their variance attains the lower bound for the asymptotic variance of estimators with the optimal rate of convergence (see Remark \ref{efficiency}).
	
To obtain the aforementioned results, our Malliavin calculus approach allows to obtain an appropriate stochastic expansion of the log-likelihood ratio (see Proposition \ref{c2prop1}). To treat the main term in the asymptotic behavior of the expansion, in the subcritical case, we apply a central limit theorem for triangular arrays of random variables together with the ergodic property whereas in the critical and supercritical cases, the corresponding convergence results \eqref{criticalconvergence2}-\eqref{criticalconvergence} and \eqref{supercriticalconvergence2} on the CIR process are essentially used. The difficult part of the proof is to deal with the negligible terms of the expansion.  In \cite{G01, G02}, a change of transition density functions of the diffusion processes is performed in order to be able to use the upper and lower bounds of Gaussian type of the transition density functions. This allows to measure the deviation of the change of transition density functions when the parameters change. For the CIR process, the transition density estimates of Gaussian type may not exist since the diffusion coefficient and its derivative are not bounded. To overcome these difficulties, instead of changing transition density functions, change of measures is essentially used (see Lemma \ref{change}). Then a technical Lemma \ref{deviation1} is established in order to measure the deviation of the change of measures when the drift parameters change. 

Furthermore, some $L^p$-norm estimation for positive and negative polynomial moments and exponential moment estimates of the CIR process taken from \cite{BK13, BD07} are needed in order to show the convergence of the negligible terms (see Lemma \ref{moment2}-\ref{moment} and Lemma \ref{expmoment}-\ref{estimates}). For this, condition {\bf(A)} above turns out to be crucial (see Remark \ref{conditionA}). 

When using our strategy, we do not require some additional assumptions on the decreasing rate of $\Delta_n$ such as $n\Delta_n^{p}\to 0$ for some $p>1$ in the subcritical (ergodic) case. However, in the non-ergodic cases, we require $n\Delta_n^{3}\to 0$ for the critical case and $\Delta_n^{2}e^{-b_0n\Delta_n}\to 0$ for the supercritical case.

This paper is organized as follows. In Section 2, we state our main results in Theorem \ref{c2theorem}, \ref{c2theorem2} and \ref{c2theorem4} which correspond respectively to the subcritical, critical and supercritical cases. Section 3 is devoted to the proof of the main results, which follows the aforementioned strategy. Using techniques of Malliavin calculus, Section 4 introduces technical results needed for the proof of the main results, which are related to an explicit expression for the score functions in terms of a conditional expectation of a Skorohod integral, and a decomposition of the Skorohod integral. The proofs of these technical results are postponed to Appendix A in order to maintain the flow of the exposition. Section $5$ focuses on studying the convergence of the remainder terms in the expansion \eqref{generalexpansion} of the log-likelihood ratio. Finally, some useful results are presented in Appendix B.

\section{Main results}
In this section, we give a statement of our main results in this paper which is divided into three cases: subcritical, critical and supercritical cases. Recall that our sampling scheme satisfies the high-frequency and infinite horizon conditions. That is, $\Delta_n\rightarrow 0$ and $n\Delta_n\rightarrow\infty$ as $n\rightarrow\infty$. 
\subsection{Subcritical case}
Assume that $b>0$. In this case, $X^{a,b}$ is ergodic (see e.g. \cite{JMRT13}) and its unique stationary distribution which we denote by $\pi_{a,b}(dx)$ is a Gamma law with shape parameter $\frac{a}{\sigma}$ and scale parameter $\frac{\sigma}{b}$ (see \cite{BK13}). That is,
\begin{equation}\label{gamma}
\pi_{a,b}(dx)=\Big(\dfrac{b}{\sigma}\Big)^{\frac{a}{\sigma}}\dfrac{1}{\Gamma(\frac{a}{\sigma})}x^{\frac{a}{\sigma}-1}e^{-\frac{b}{\sigma}x}dx.
\end{equation}
Furthermore, $X_t^{a,b}$ converges in law as $t\to\infty$ towards a random variable $X_{\infty}^{a,b}$ whose distribution is given by $\pi_{a,b}(dx)$ (see, e.g., \cite[Proposition 3 and 4]{BK12} or \cite[Theorem 5.3]{BP15}). 
Moreover, for any $\pi_{a,b}(dx)$-integrable function $h$, $X^{a,b}$ has the ergodic property in the sense that as $t\to\infty$,
\begin{align}\label{convergenceergodic}
\dfrac{1}{t}\int_{0}^{t}h(X_s^{a,b})ds\longrightarrow\widehat{\E}^{a,b}[h(X_{\infty}^{a,b})]=\int_{0}^{\infty}h(x)\pi_{a,b}(dx),\quad \widehat{\P}^{a,b}\textnormal{-a.s.}
\end{align}
Now, for fixed $(a_0,b_0)\in \Theta\times \Sigma_{+}$ where $\Sigma_{+}$ is a closed interval of $\R^{\ast}_+$, we consider a discrete observation $X^{n,a_0,b_0}=(X_{t_0}^{a_0,b_0}, X_{t_1}^{a_0,b_0},\ldots,X_{t_n}^{a_0,b_0})$ of the process $X^{a_0,b_0}$. The first result of this paper is the LAN property.
\begin{theorem}\label{c2theorem} Assume condition {\bf(A)} and $b_0>0$. Then,
the LAN property holds for the likelihood at $(a_0,b_0)$ with rate of convergence $(\sqrt{n\Delta_n},\sqrt{n\Delta_n})$ and asymptotic Fisher information matrix $I(a_0,b_0)$. That is, for all $z=(u,v)^{\ast}\in\R^2$, as $n\to\infty$,
\begin{equation*} 
\log\dfrac{d\P_n^{a_n,b_n}}{d\P^{a_0,b_0}_n}(X^{n,a_0,b_0})
\overset{\mathcal{L}(\widehat{\P}^{a_0,b_0})}{\longrightarrow} z^{\ast}\mathcal{N}\left(0,I(a_0,b_0)\right)-\dfrac{1}{2}z^{\ast} I(a_0,b_0)z,
\end{equation*}
where $a_n:=a_0+\dfrac{u}{\sqrt{n\Delta_n}}$, $b_n:=b_0+\dfrac{v}{\sqrt{n\Delta_n}}$, and $\mathcal{N}(0,I(a_0,b_0))$ is a centered $\R^2$-valued Gaussian vector with covariance matrix 
$$I(a_0,b_0):=\dfrac{1}{2\sigma}\begin{pmatrix}\dfrac{b_0}{a_0-\sigma}&-1\\-1&\dfrac{a_0}{b_0}
\end{pmatrix}.$$
\end{theorem}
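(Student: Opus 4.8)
The plan is to establish the convergence \eqref{LAQ} by analyzing the stochastic expansion of the log-likelihood ratio recalled in the introduction. Write $\varphi_{1,n}=\varphi_{2,n}=1/\sqrt{n\Delta_n}$. Using the Malliavin-calculus representation of the logarithmic derivatives of the transition density (the analogue of Gobet's formula, Proposition \ref{c2prop1}) together with the decomposition of the Skorohod integral (Lemma \ref{delta} and \ref{delta2}), I would decompose each summand $\partial_a p^{a,b}/p^{a,b}(\Delta_n, X_{t_k}^{a_0,b_0}, X_{t_{k+1}}^{a_0,b_0})$ and $\partial_b p^{a,b}/p^{a,b}(\Delta_n, X_{t_k}^{a_0,b_0}, X_{t_{k+1}}^{a_0,b_0})$ into a principal martingale-increment term plus remainders. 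Concretely one expects the leading contribution to take the form of a martingale transform
\begin{equation*}
\frac{1}{\sqrt{n\Delta_n}}\sum_{k=0}^{n-1}\left( u\,\zeta_{k}^{(1)} + v\,\zeta_{k}^{(2)} \right),
\end{equation*}
where $\zeta_k^{(i)}$ are $\widehat{\mathcal{F}}_{t_{k+1}}$-measurable, $\widehat{\E}^{a_0,b_0}$-centered given $\widehat{\mathcal{F}}_{t_k}$, built from the Brownian increment over $[t_k,t_{k+1}]$ weighted by functions of $X_{t_k}^{a_0,b_0}$ (essentially $1/\sqrt{X_{t_k}}$ for the $a$-direction and $\sqrt{X_{t_k}}$ for the $b$-direction, reflecting the Fisher information entries).

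**Key steps.** First, I would invoke Proposition \ref{c2prop1}, Lemma \ref{delta} and \ref{delta2} to write the log-likelihood ratio as a main term plus explicitly identified negligible terms; here the mean value theorem is already applied in the introduction, so the integration in $\ell$ over $a(\ell), b(\ell)$ must be controlled by showing the $\ell$-dependence is asymptotically negligible (the parameter shift is $O(1/\sqrt{n\Delta_n})\to 0$). Second, for the main term I would apply a central limit theorem for triangular arrays of martingale differences (e.g. the standard Jacod--Shiryaev criterion): one must check (a) the conditional variances converge, $\frac{1}{n\Delta_n}\sum_{k=0}^{n-1}\widehat{\E}^{a_0,b_0}[(\zeta_k^{(i)})^2\mid\widehat{\mathcal{F}}_{t_k}] \overset{\widehat{\P}^{a_0,b_0}}{\longrightarrow} I_{ii}(a_0,b_0)$ and similarly the cross term to $I_{12}$, and (b) the conditional Lindeberg/Lyapunov condition $\frac{1}{(n\Delta_n)^{1+\varepsilon/2}}\sum_k \widehat{\E}^{a_0,b_0}[|\zeta_k^{(i)}|^{2+\varepsilon}\mid \widehat{\mathcal{F}}_{t_k}] \to 0$. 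The variance limits are where ergodicity enters: by \eqref{ergo1}--\eqref{ergo2} and more generally the ergodic theorem, $\frac{1}{n}\sum_{k=0}^{n-1} h(X_{t_k}^{a_0,b_0})\Delta_n \to \int h\, d\pi_{a_0,b_0}$, producing $\E[1/X_\infty] = b_0/(a_0-\sigma)$ and $\E[X_\infty]=a_0/b_0$ and hence the matrix $\frac{1}{2\sigma}\begin{pmatrix} b_0/(a_0-\sigma) & -1 \\ -1 & a_0/b_0\end{pmatrix}$. This is precisely the content alluded to in Lemma \ref{main}. Third, the automatic relation $-\frac12 z^\ast I z$ appears from the $O(\Delta_n)$ drift/It\^o-correction part of each summand, or alternatively from the general fact that a converging log-likelihood ratio must have this quadratic form; but since we claim LAN I would verify it directly through the second-order expansion.

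**Negligible terms — the main obstacle.** The genuine difficulty, as the introduction emphasizes, is bounding the remainder terms in the expansion uniformly in $\ell\in[0,1]$. Because the diffusion coefficient $\sqrt{2\sigma x}$ is not globally Lipschitz and degenerates at $0$, I cannot use Gaussian-type transition density bounds; instead I would follow the paper's route: use Girsanov's theorem (the change of measure \eqref{ratio2}) to compare $\widehat{\P}^{a(\ell),b(\ell)}$ with $\widehat{\P}^{a_0,b_0}$, control the Radon--Nikodym deviations via Lemma \ref{deviation1}, and then estimate the remaining conditional expectations using the $L^p$ bounds on positive and negative moments of the CIR process (Lemma \ref{moment}--\ref{estimates}), where condition $\mathbf{(A)}$, $a/\sigma > 5 + 3\sqrt 2$, supplies enough integrability of $X^{-1}$ and its powers (Lemma \ref{estimate}, Remark \ref{conditionA}). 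Schematically each remainder is shown to be $o_{\widehat{\P}^{a_0,b_0}}(1)$ after multiplying by $\sqrt{n\Delta_n}$ and summing, using that it carries an extra power of $\Delta_n$ (or $\sqrt{\Delta_n}$) per summand beyond the main term together with $n\Delta_n\to\infty$, $\Delta_n\to 0$. I would organize this as: (i) isolate each remainder type produced by Lemma \ref{delta}/\ref{delta2}, (ii) bound its conditional $L^1$ norm by a constant times a power of $\Delta_n$ times a moment of $X_{t_k}$ controlled by Lemma \ref{estimate}, (iii) sum and use the growth rates. The subtlety is that in the ergodic case no extra rate assumption on $\Delta_n$ is needed, so the bookkeeping of powers of $\Delta_n$ must be tight — this is where most of the work lies, and I expect it to be the crux of the argument.
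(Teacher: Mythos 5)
Your strategy coincides with the paper's proof: the same expansion of the log-likelihood ratio via Proposition \ref{c2prop1} and Lemmas \ref{delta}--\ref{delta2} into the martingale-increment terms $\xi_{k,n},\eta_{k,n}$ plus remainders, the same central limit theorem for triangular arrays (Lemma \ref{clt}) combined with the ergodic limits \eqref{ergo1}--\eqref{ergo2} yielding $I(a_0,b_0)$ and the $-\tfrac12 z^{\ast}Iz$ term from the conditional means, and the same treatment of the negligible terms via Girsanov's theorem, Lemma \ref{deviation1} and the moment estimates of Lemmas \ref{moment}--\ref{estimate} under condition {\bf(A)}. The only cosmetic difference is that you propose a Lyapunov $(2+\varepsilon)$-moment condition where the paper checks fourth conditional moments, which is the same criterion in practice.
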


\subsection{Critical case}

Assume that $b=0$. In this case, from \cite[Proposition 1]{BK12}, as $t\to\infty$,
\begin{align}
\Big(\dfrac{X_t^{a,0}}{t},\dfrac{1}{t^2}\int_0^tX_s^{a,0}ds\Big)\overset{\mathcal{L}(\widehat{\P}^{a,0})}{\longrightarrow} \Big(R_1^{a,0},\int_0^1R_s^{a,0}ds\Big).\label{criticalconvergence2}
\end{align}
Here, $R^{a,0}=(R_t^{a,0})_{t\geq 0}$ is the CIR process starting from $0$, solution to
\begin{equation}\label{cirr0}
dR_t^{a,0}=adt+\sqrt{2\sigma R_t^{a,0}}dB_t,
\end{equation}
where $R_0^{a,0}=0$. Moreover, from \cite[Proposition 2]{BK12} when $a>\sigma$, as $t\to\infty$,
\begin{equation}\label{criticalconvergence}
\dfrac{1}{\log t}\int_0^t\dfrac{ds}{X_s^{a,0}}\overset{\widehat{\P}^{a,0}}{\longrightarrow}\dfrac{1}{a-\sigma}.
\end{equation}
It is worth noticing that the formulae \eqref{criticalconvergence2} can be obtained by a scaling argument. See Remark on page 614 of Ben Alaya and Kebaier \cite{BK12}.

For fixed $(a_0,0)\in\Theta\times\Sigma$, consider a discrete observation $X^{n,a_0,0}=(X_{t_0}^{a_0,0}, X_{t_1}^{a_0,0},\ldots,X_{t_n}^{a_0,0})$ of the process $X^{a_0,0}$. The second result of this paper is the following LAQ property.
\begin{theorem}\label{c2theorem2}
Assume condition {\bf(A)}, $b_0=0$ and that $n\Delta_n^{3}\to 0$ as $n\to\infty$. Then, the LAQ property holds for the likelihood at $(a_0,0)$ with rates of convergence $(\sqrt{\log(n\Delta_n)},n\Delta_n)$ and random matrix $I(a_0,0)$. That is, for all $z=(u,v)^{\ast}\in\R^2$, as $n\to\infty$,
\begin{equation*}
\log\dfrac{d\P_n^{a_n,b_n}}{d\P^{a_0,0}_n}(X^{n,a_0,0}) 
\overset{\mathcal{L}(\widehat{\P}^{a_0,0})}{\longrightarrow} z^{\ast}U(a_0,0)-\dfrac{1}{2}z^{\ast}I(a_0,0)z,
\end{equation*}
and
$$
\widehat{\E}^{a_0,0}\Big[e^{z^{\ast}U(a_0,0)-\frac{1}{2}z^{\ast}I(a_0,0)z}\Big]=1,
$$ 
where $a_n:=a_0+\frac{u}{\sqrt{\log(n\Delta_n)}}$, $b_n:=0+\frac{v}{n\Delta_n}$, and $U(a_0,0)$ is a $\R^2$-valued random vector given by
$$
U(a_0,0):=\Big(\dfrac{1}{\sqrt{2\sigma(a_0-\sigma)}}G,\dfrac{a_0-R_1^{a_0,0}}{2\sigma}\Big)^{\ast}
$$
with covariance matrix 
$$
I(a_0,0):=\dfrac{1}{2\sigma}\begin{pmatrix}\dfrac{1}{a_0-\sigma}&0\\0&\int_0^1R_s^{a_0,0}ds
\end{pmatrix}.
$$
Here, $G$ is a standard normal random variable independent of $(R_1^{a_0,0},\int_0^1R_s^{a_0,0}ds)$, where $(R_t^{a,0})_{t\geq 0}$ is the CIR process starting from $0$ defined by \eqref{cirr0}.
\end{theorem}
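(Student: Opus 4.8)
The plan is to follow the Malliavin-calculus strategy outlined in the Introduction: produce a stochastic expansion of the log-likelihood ratio, pass to the limit in its main term, show that the remainders are negligible, and finally verify the normalisation identity. \emph{Stochastic expansion.} Starting from the mean-value decomposition of $\log\frac{d\P_n^{a_n,b_n}}{d\P_n^{a_0,0}}$ displayed in the Introduction (with $b_0=0$), I would replace each logarithmic derivative $\frac{\partial_a p^{a(\ell),0}}{p^{a(\ell),0}}$ and $\frac{\partial_b p^{a_n,b(\ell)}}{p^{a_n,b(\ell)}}$, evaluated at $(\Delta_n,X_{t_k}^{a_0,0},X_{t_{k+1}}^{a_0,0})$, by the conditional expectation of a Skorohod integral provided by Proposition~\ref{c2prop1}, and then apply the decomposition of that integral from Lemmas~\ref{delta} and \ref{delta2}. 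This produces a representation
\begin{equation*}
\log\dfrac{d\P_n^{a_n,b_n}}{d\P^{a_0,0}_n}\left(X^{n,a_0,0}\right)=u\,\zeta_n^{(1)}+v\,\zeta_n^{(2)}-\dfrac{u^2}{2}W_n^{(1)}-\dfrac{v^2}{2}W_n^{(2)}-uv\,W_n^{(12)}+\mathcal{R}_n,
\end{equation*}
where $\zeta_n^{(1)}=\varphi_{1,n}\sum_k\xi_{n,k}^{(1)}$ and $\zeta_n^{(2)}=\varphi_{2,n}\sum_k\xi_{n,k}^{(2)}$ are the normalised main parts of the score, with $\widehat{\mathcal F}_{t_k}$-conditionally centred increments behaving like $\xi_{n,k}^{(1)}\sim\tfrac{1}{\sqrt{2\sigma X_{t_k}^{a_0,0}}}(B_{t_{k+1}}-B_{t_k})$ and $\xi_{n,k}^{(2)}\sim-\tfrac1{2\sigma}\int_{t_k}^{t_{k+1}}\sqrt{2\sigma X_s^{a_0,0}}\,dB_s=-\tfrac1{2\sigma}(X_{t_{k+1}}^{a_0,0}-X_{t_k}^{a_0,0}-a_0\Delta_n)$; $W_n^{(1)},W_n^{(2)},W_n^{(12)}$ are the associated normalised conditional (co)variations arising from the second-order term; and $\mathcal{R}_n$ collects all remainders.

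\emph{Main term (Lemma~\ref{main2}).} Since $\varphi_{1,n}^{-1}=\sqrt{\log(n\Delta_n)}$, one has $W_n^{(1)}=\tfrac{1}{2\sigma\log(n\Delta_n)}\sum_k\tfrac{\Delta_n}{X_{t_k}^{a_0,0}}+o_{\P}(1)$, which by a Riemann-sum version of \eqref{criticalconvergence} converges to the deterministic limit $I_{11}(a_0,0)=\tfrac{1}{2\sigma(a_0-\sigma)}$; the $b$-score telescopes, so $v\,\zeta_n^{(2)}=-\tfrac{v}{2\sigma\,n\Delta_n}\big(X_{n\Delta_n}^{a_0,0}-x_0-a_0n\Delta_n\big)+o_{\P}(1)\to v\,U_2(a_0,0)$ by \eqref{criticalconvergence2}; $W_n^{(2)}=\tfrac{1}{2\sigma(n\Delta_n)^2}\sum_k\Delta_n X_{t_k}^{a_0,0}+o_{\P}(1)\to I_{22}(a_0,0)=\tfrac1{2\sigma}\int_0^1R_s^{a_0,0}ds$ by \eqref{criticalconvergence3}; while the cross term $W_n^{(12)}$ is of order $(\log(n\Delta_n))^{-1/2}$ and hence vanishes, which accounts for the off-diagonal zero of $I(a_0,0)$. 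The vanishing of the cross-variation together with the deterministic value of $I_{11}(a_0,0)$ yields, through the stable central limit theorem for martingale-difference triangular arrays combined with the functional convergence $X_{\cdot n\Delta_n}^{a_0,0}/(n\Delta_n)\Rightarrow R_\cdot^{a_0,0}$, the joint limit of $(\zeta_n^{(1)},\zeta_n^{(2)},W_n^{(1)},W_n^{(2)},W_n^{(12)})$ towards $\big(\tfrac{1}{\sqrt{2\sigma(a_0-\sigma)}}G,\,U_2(a_0,0),\,I_{11}(a_0,0),\,I_{22}(a_0,0),\,0\big)$ with $G\sim\mathcal{N}(0,1)$ independent of $(R_1^{a_0,0},\int_0^1R_s^{a_0,0}ds)$, i.e.\ exactly $z^{\ast}U(a_0,0)-\tfrac12z^{\ast}I(a_0,0)z$. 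This is where $\tfrac{n\Delta_n^{3/2}}{\log(n\Delta_n)}\to0$ is used: the discretisation errors $\sum_k\int_{t_k}^{t_{k+1}}\big(\tfrac1{X_{t_k}^{a_0,0}}-\tfrac1{X_s^{a_0,0}}\big)ds$ and $\sum_k\int_{t_k}^{t_{k+1}}(X_{t_k}^{a_0,0}-X_s^{a_0,0})ds$ are of order $n\Delta_n^{3/2}$ in $L^1$ (an increment of $X^{a_0,0}$ over one step has $L^p$-norm of order $\Delta_n^{1/2}$), which must be negligible after division by $\log(n\Delta_n)$.

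\emph{Negligible terms (the main obstacle).} Establishing $\mathcal{R}_n\overset{\widehat{\P}^{a_0,0}}{\longrightarrow}0$ is the delicate part, and I expect it to be the hardest step. Because the CIR transition density admits no Gaussian-type two-sided bounds (its diffusion coefficient and its derivative being unbounded), one cannot follow Gobet's density-comparison argument; instead I would use the Girsanov change of measure \eqref{ratio2} turning $\widehat{\P}^{a_0,0}$ into $\widehat{\P}^{a(\ell),b(\ell)}$ (and into $\widehat{\P}^{a_n,b(\ell)}$), apply Lemma~\ref{deviation1} to control the deviation of the corresponding Radon--Nikodym density from $1$ when the drift parameters move by $O(\varphi_{1,n})=O((\log(n\Delta_n))^{-1/2})$ and $O(\varphi_{2,n})=O((n\Delta_n)^{-1})$, and then bound the remaining conditional expectations by the positive and negative polynomial moment bounds and the exponential moment estimates of the CIR process in Lemmas~\ref{moment}--\ref{estimates}. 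Condition~\textbf{(A)}, $\tfrac a\sigma>5+3\sqrt2$, enters precisely here: it guarantees finiteness and uniform-in-$k$ control of the negative moments $\widehat{\E}^{a_0,0}[(X_t^{a_0,0})^{-p}]$ at the orders required by the Skorohod-integral remainder (cf.\ Lemma~\ref{estimate} and Remark~\ref{conditionA}). The main technical burden is obtaining summable, uniform-in-$k$ bounds for the Skorohod-integral remainder and for the change-of-measure effect while losing only the permitted power of $\Delta_n$.

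\emph{Verification of the LAQ normalisation.} It remains to check $\widehat{\E}^{a_0,0}[e^{z^{\ast}U(a_0,0)-\frac12z^{\ast}I(a_0,0)z}]=1$. Since $I(a_0,0)$ is diagonal and $G$ is independent of $(R_1^{a_0,0},\int_0^1R_s^{a_0,0}ds)$, the expectation factorises as $\widehat{\E}^{a_0,0}[e^{uU_1-\frac12u^2I_{11}}]\cdot\widehat{\E}^{a_0,0}[e^{vU_2-\frac12v^2I_{22}}]$; the first factor equals $1$ by the moment generating function of the standard normal, because $I_{11}(a_0,0)=\tfrac{1}{2\sigma(a_0-\sigma)}$ is deterministic. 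For the second, note that $\widehat{\E}^{a_0,0}[R_t^{a_0,0}]=a_0t$, hence $R_1^{a_0,0}-a_0=\int_0^1\sqrt{2\sigma R_s^{a_0,0}}\,d\overline B_s$, so that
\begin{equation*}
e^{vU_2(a_0,0)-\frac12v^2I_{22}(a_0,0)}=\exp\left\{-\dfrac{v}{\sqrt{2\sigma}}\int_0^1\sqrt{R_s^{a_0,0}}\,d\overline B_s-\dfrac12\dfrac{v^2}{2\sigma}\int_0^1R_s^{a_0,0}\,ds\right\}=\dfrac{d\P_{R,1}^{a_0,-v}}{d\P_{R,1}^{a_0,0}}\left((R_s^{a_0,0})_{s\in[0,1]}\right),
\end{equation*}
whose expectation under $\P_R^{a_0,0}$ is $1$ by the martingale property of the Radon--Nikodym derivative process associated with \eqref{RN}. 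This gives $\widehat{\E}^{a_0,0}[e^{z^{\ast}U(a_0,0)-\frac12z^{\ast}I(a_0,0)z}]=1$ and completes the proof of the LAQ property.
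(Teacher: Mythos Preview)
Your proposal is correct and follows essentially the same three-step strategy as the paper: expansion via Proposition~\ref{c2prop1} and Lemmas~\ref{delta}--\ref{delta2} (this is Lemma~\ref{expansion2}), convergence of the main term (Lemma~\ref{main2}), negligibility of the remainders via Girsanov and Lemma~\ref{deviation1} (Lemma~\ref{negligible2}), and the LAQ normalisation via the Radon--Nikodym martingale associated with \eqref{RN}. The one place where your sketch differs from the paper is the main-term step: you invoke a stable martingale CLT coupled with the functional scaling limit of $X^{a_0,0}$, whereas the paper instead rewrites $\int_0^{t_n}(X_s^{a_0,0})^{-1/2}\,dB_s$ via It\^o's formula as a functional of $\log X_{t_n}^{a_0,0}$ and $\int_0^{t_n}(X_s^{a_0,0})^{-1}ds$ and then quotes the joint convergence result of \cite[Theorem~6]{BK13}, which delivers directly the independence of $G$ from $(R_1^{a_0,0},\int_0^1 R_s^{a_0,0}ds)$ that your stable-CLT description leaves implicit. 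A minor imprecision: the rate condition $\tfrac{n\Delta_n^{3/2}}{\log(n\Delta_n)}\to0$ is needed only for the $a$-direction discretisation errors (those divided by $\log(n\Delta_n)$); for the $b$-direction the normalisation by $(n\Delta_n)^2$ already makes the corresponding error vanish under $n\Delta_n\to\infty$ alone.
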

We notice that Theorem \ref{c2theorem2} solves the problem left open in the last three lines in Subsection 3.2 of Overbeck \cite{O98} for time-continuous observations.

	Next, we state the result in the case where there is only one unknown parameter.
	\begin{corollary}\label{oneparacritical}
			\textnormal{(i)} Let $b=0$ be known and let $a$ be unknown parameter. Assume condition {\bf(A)}. Then, the LAN property holds for the likelihood at $a_0$ with rate of convergence $\sqrt{\log(n\Delta_n)}$ and asymptotic Fisher information $I(a_0)=\frac{1}{2\sigma(a_0-\sigma)}$. That is, for all $u\in\R$, as $n\to\infty$,
			\begin{equation*}
			\log\dfrac{d\P_n^{a_n,0}}{d\P^{a_0,0}_n}(X^{n,a_0,0}) 
			\overset{\mathcal{L}(\widehat{\P}^{a_0,0})}{\longrightarrow} u\mathcal{N}(0,I(a_0))-\dfrac{u^2}{2}I(a_0).
			\end{equation*}
			
			\textnormal{(ii)} Let $a$ be known and let $b=0$ be unknown parameter. Assume condition {\bf(A)} and that $n\Delta_n^{3}\to 0$ as $n\to\infty$. Then, the LAQ property holds for the likelihood at $b_0=0$ with rate of convergence $n\Delta_n$ and random variable $U(0)=\frac{a-R_1^{a,0}}{2\sigma}$ whose conditional variance is given by $I(0)=\frac{1}{2\sigma}\int_0^1R_s^{a,0}ds$. That is, for all $v\in\R$, as $n\to\infty$,
			\begin{equation*}
			\log\dfrac{d\P_n^{a,b_n}}{d\P^{a,0}_n}(X^{n,a,0}) 
			\overset{\mathcal{L}(\widehat{\P}^{a,0})}{\longrightarrow} vU(0)-\dfrac{v^2}{2}I(0),
			\end{equation*}
			and
			$$
			\widehat{\E}^{a,0}\big[e^{vU(0)-\frac{v^2}{2}I(0)}\big]=1.
			$$			
		\end{corollary}

\subsection{Supercritical case}

Assume that $b<0$. In this case, from \cite[Proposition 3]{BK12}, as $t\to\infty$,
\begin{align}
&\Big(e^{bt}X_t^{a,b},e^{bt}\int_0^tX_s^{a,b}ds\Big)\overset{\mathcal{L}(\widehat{\P}^{a,b})}{\longrightarrow} \Big(R_{-\frac{1}{b}}^a,-\dfrac{1}{b}R_{-\frac{1}{b}}^a\Big).\label{supercriticalconvergence2}
\end{align}
Here, $(R_t^a)_{t\geq 0}$ is the CIR process starting from $x_0$, solution to 
\begin{equation}\label{cirr}
dR_t^a=adt+\sqrt{2\sigma R_t^a}dB_t,
\end{equation}
where $R_0^a=x_0$. The first component in the right hand side in \eqref{supercriticalconvergence2} specifies the limit variable called $W$ in the last line of Theorem 3 (i) of Overbeck \cite{O98}. This limit is also immediate from the formula (8.10) on page 236 of Ikeda-Watanabe \cite{IW89}	combined with the formula (1.1) of Pinsky \cite{P72}.

Let $a$ be known and let $b<0$ be unknown parameter. For fixed $b_0\in \Sigma_{-}$ where $\Sigma_{-}$ is a closed interval of $\R^{\ast}_-=\R_{-}\setminus\{0\}$, we consider a discrete observation $X^{n,a_0,b_0}=(X_{t_0}^{a_0,b_0}, X_{t_1}^{a_0,b_0},\ldots,X_{t_n}^{a_0,b_0})$ of the process $X^{a_0,b_0}$ where $a_0$ is known. The last result of this paper is the following LAMN property in the case where $b$ is the only one unknown parameter.
\begin{theorem}\label{c2theorem4} Assume condition {\bf(A)}, $b_0<0$ and that $\Delta_n^{2}e^{-b_0n\Delta_n}\to 0$ as $n\to\infty$. Then, the LAMN property holds for the likelihood at $b_0$ with rate of convergence $e^{-b_0\frac{n\Delta_n}{2}}$ and asymptotic random Fisher information $I(b_0):=-\frac{1}{2\sigma b_0}R_{-\frac{1}{b_0}}^{a_0}$. That is, for all $v\in\R$, as $n\to\infty$,
	\begin{equation*}
	\log\dfrac{d\P_n^{a_0,b_n}}{d\P^{a_0,b_0}_n}(X^{n,a_0,b_0})\overset{\mathcal{L}(\widehat{\P}^{a_0,b_0})}{\longrightarrow} v\sqrt{I(b_0)}\mathcal{N}\left(0,1\right)-\dfrac{v^2}{2}I(b_0),
	\end{equation*}
	where $b_n:=b_0+\frac{v}{e^{-b_0\frac{n\Delta_n}{2}}}$, $\mathcal{N}(0,1)$ is a centered standard Gaussian random variable which is independent of $I(b_0)$. Here, $(R_t^a)_{t\geq 0}$ is the CIR process starting from $x_0$ defined by \eqref{cirr}.
\end{theorem}
We notice that Theorem \ref{c2theorem4} improves the result obtained by Overbeck \cite{O98} in Subsection 3.3 for time-continuous observations. 
\begin{remark}\label{conditionA}
Let us mention that condition {\bf(A)} on the ratio of the coefficients $\frac{a}{\sigma}$ required in Theorem \ref{c2theorem}, \ref{c2theorem2} and \ref{c2theorem4} is similar to condition $(10)$ in \cite[Theorem 2.2]{BBD08} which is used to prove the strong convergence of the symmetrized Euler scheme applied to CIR process. In fact, this technical condition comes from our techniques used in this paper and it is needed to treat Lemma \ref{lemma1} and \ref{lemma4}. It is worth noticing that the lower bound $\frac{11+\sqrt{89}}{2}$ appearing in condition {\bf(A)} is fixed in an optimal way in the sense that the H\"older weights used to control the remainder terms are carefully chosen so that our final condition {\bf (A)}: $\frac{a}{\sigma}\gtrsim 10.22$ is slightly different from the  condition $\frac{a}{\sigma}> 5+3\sqrt{2}\approx 9.24$ needed for the representation of the score function in Proposition \ref{c2prop1}.
\end{remark}
\begin{remark}\label{efficiency} The asymptotic efficiency of MLE based on continuous observations can be obtained using \cite[Theorem 5 and 6]{BK13} and \cite[Theorem 1]{BK12}. Indeed, combining our main results Theorem \ref{c2theorem} and Theorem \ref{c2theorem2} together with Theorem 8 and Theorem 9 in \cite{BK13}, the discrete MLE estimator given by (15) in \cite{BK13} achieves the lower bound given by the inverse of the asymptotic Fisher information matrix $I(a_0,b_0)$ and $I(a_0,0)$ in the subcritical and critical cases, respectively. Note that the MLE estimator for $a$ or $(a,b)$ is not consistent in the supercritical case (see Theorem 2 in \cite{BK12} or Theorem 7 in \cite{BK13}). We can only estimate the parameter $b$ for this supercritical case (see Subsection 3.3 in \cite{O98}).
\end{remark}
\begin{remark}
From Section 3 of \cite{O98} and Theorem \ref{c2theorem}, \ref{c2theorem2} and \ref{c2theorem4}, it is worth noticing that in the subcritical, critical and supercritical cases, no information is lost with respect to the continuous case under the high-frequency discrete observation scheme since the rates of convergence and the limit information coincide in both continuous and discrete cases.
\end{remark}
As usual, positive constants will be denoted by $C, C_1, C_2$. They may change of value from one line to the next.

\section{Proof of main results}
In this section, we give the proof of Theorem \ref{c2theorem}, \ref{c2theorem2} and \ref{c2theorem4}. Let us first recall the following useful results taken from \cite{BK13}.
\begin{lemma}\label{moment2}\cite[Proposition $3$]{BK13} 
	\begin{itemize}
		\item[\textnormal{(i)}] For all $p<\frac{a}{\sigma}$ and $b>0$, we have \; $\sup_{t\geq 0}\widehat{\E}^{a,b}[\frac{1}{(X_t^{a,b})^{p}}]<\infty$.
		\item[\textnormal{(ii)}] For all $p<\frac{a}{\sigma}$ and  $b=0$, we have $$\sup_{0\leq t\leq 1}\widehat{\E}^{a,0}[\frac{1}{(X_t^{a,0})^{p}}]<\infty\quad\textnormal{and}\quad \sup_{t\geq 1}\frac{\widehat{\E}^{a,0}[\frac{1}{(X_t^{a,0})^{p}}]}{t^{^{-p}}}<\infty.$$
		\item[\textnormal{(iii)}] For all $0<p<\frac{a}{\sigma}$ and $b<0$, we have $$
		\sup_{0\leq t\leq 1}\widehat{\E}^{a,b}[\frac{1}{(X_t^{a,b})^{p}}]<\infty\quad\textnormal{and}\quad\sup_{t\geq 1}\frac{\widehat{\E}^{a,b}[\frac{1}{(X_t^{a,b})^{p}}]}{t^{^{-p}}}<\infty.$$
	\end{itemize}	
\end{lemma}
Recall that \textnormal{(i)} and \textnormal{(ii)} of Lemma \ref{moment2} are taken from \cite[Proposition $3$]{BK13}. The statement \textnormal{(iii)} of Lemma \ref{moment2} is due to the Comparison Theorem giving that $X_t^{a,0}\leq X_t^{a,b}$ a.s. for all $t\geq 0$ when $b<0$ as their corresponding drift coefficients satisfy $a-0x<a-bx$.
\begin{lemma}\label{moment}\cite[Proposition $4$ and $5$]{BK13} Let $t>s\geq 0$ satisfying that $0<t-s<1$.
	\begin{itemize}
		\item[\textnormal{(i)}] Assume that $b>0$. Then for any $q\geq 1$, there exists a constant $C>0$ such that
		$$
		\widehat{\E}^{a,b}\big[\vert X_t^{a,b}-X_s^{a,b}\vert^q\big]\leq C\left(t-s\right)^{\frac{q}{2}}.
		$$
		\item[\textnormal{(ii)}] Assume that $b=0$. Then there exists a constant $C>0$ such that
		\begin{align*}
		\widehat{\E}^{a,0}\big[\vert X_t^{a,0}-X_s^{a,0}\vert^q\big]\leq\begin{cases}
		C\left(t-s\right)^{\frac{q}{2}}\underset{u\in[s,t]}{\sup}\widehat{\E}^{a,0}[(X_u^{a,0})^{\frac{q}{2}}],\quad\textnormal{for any}\ q\geq 2,\\
		C\left(at+x_0\right)^{\frac{q}{2}}\left(t-s\right)^{\frac{q}{2}},\quad\textnormal{for any}\ q\in[1,2).
		\end{cases} 
		\end{align*}
	\end{itemize}	
\end{lemma}
From \eqref{maindecompo}, the log-likelihood ratio can be written as follows
\begin{equation}\label{generalexpansion}\begin{split}
\log\dfrac{d\P_n^{a_n,b_n}}{d\P^{a_0,b_0}_n}(X^{n,a_0,b_0})
&=\sum_{k=0}^{n-1}u\varphi_{1,n}(a_0,b_0)\int_0^1\dfrac{\partial_{a}p^{a(\ell),b_0}}{p^{a(\ell),b_0}}(\Delta_n,X_{t_k}^{a_0,b_0},X_{t_{k+1}}^{a_0,b_0})d\ell\\
&\qquad+\sum_{k=0}^{n-1}v\varphi_{2,n}(a_0,b_0)\int_0^1\dfrac{\partial_{b}p^{a_n,b(\ell)}}{p^{a_n,b(\ell)}}(\Delta_n,X_{t_k}^{a_0,b_0},X_{t_{k+1}}^{a_0,b_0})d\ell\\
&=\sum_{k=0}^{n-1}\left(\xi_{k,n}+\eta_{k,n}\right)+\sum_{k=0}^{n-1}\left(R_{k,n}+H_{k,n}\right),
\end{split}
\end{equation}
where $\xi_{k,n}$ and $\eta_{k,n}$ are the main terms whereas $R_{k,n}$, $H_{k,n}$ are the remainder terms in the expansion. Two couples $(\xi_{k,n},R_{k,n})$ and $(\eta_{k,n},H_{k,n})$ will appear from the analysis of two corresponding score functions $\frac{\partial_{a}p^{a(\ell),b_0}}{p^{a(\ell),b_0}}(\Delta_n,X_{t_k}^{a_0,b_0},X_{t_{k+1}}^{a_0,b_0})$ and $\frac{\partial_{b}p^{a_n,b(\ell)}}{p^{a_n,b(\ell)}}(\Delta_n,X_{t_k}^{a_0,b_0},X_{t_{k+1}}^{a_0,b_0})$. The main terms are given by
\begin{equation}\label{xieta}\begin{split}
\xi_{k,n}&=u\varphi_{1,n}(a_0,b_0)\dfrac{1}{2\sigma X_{t_k}^{a_0,b_0}}\big(\sqrt{2\sigma X_{t_k}^{a_0,b_0}}\left(B_{t_{k+1}}-B_{t_{k}}\right)-\dfrac{1}{2}u\varphi_{1,n}(a_0,b_0)\Delta_n\big),\\
\eta_{k,n}&=\dfrac{-v}{2\sigma}\varphi_{2,n}(a_0,b_0)\Big(\sqrt{2\sigma X_{t_k}^{a_0,b_0}}(B_{t_{k+1}}-B_{t_{k}})\\
&\qquad-\Delta_n\big(u\varphi_{1,n}(a_0,b_0)-\frac{1}{2}v\varphi_{2,n}(a_0,b_0)X_{t_k}^{a_0,b_0}\big)\Big),
\end{split}
\end{equation}
with in the subcritical case ($b_0>0$) $(\varphi_{1,n}(a_0,b_0),\varphi_{2,n}(a_0,b_0))=(\frac{1}{\sqrt{n\Delta_n}},\frac{1}{\sqrt{n\Delta_n}})$, in the critical case ($b_0=0$) $(\varphi_{1,n}(a_0,0),\varphi_{2,n}(a_0,0))=(\frac{1}{\sqrt{\log(n\Delta_n)}},\frac{1}{n\Delta_n})$, and in the supercritical case ($b_0<0$) $(\varphi_{1,n}(a_0,b_0),\varphi_{2,n}(a_0,b_0))=(0,e^{b_0\frac{n\Delta_n}{2}})$. The remainder terms $R_{k,n}$, $H_{k,n}$ are implicitly defined by the decomposition \eqref{generalexpansion} and they will be explicitly determined in the proof of Lemma \ref{negligibleterms}. Then, our aim will be to study the asymptotic of the main terms $\sum_{k=0}^{n-1}(\xi_{k,n}+\eta_{k,n})$. Note that the convergence of this sum requires a weaker condition than {\bf(A)}, that is $\frac{a}{\sigma}>4$. However, the convergence of the remainder terms $\sum_{k=0}^{n-1}(R_{k,n}+H_{k,n})$ given in the following lemma will require condition {\bf(A)}.
\begin{lemma}\label{negligibleterms}
	Assume condition {\bf(A)} and our setting $\Delta_n\rightarrow 0$ and $n\Delta_n\rightarrow\infty$ as $n\rightarrow\infty$. Then, for the subcritical case, as $n\to\infty$,
	\begin{align}\label{remainder}
	&\sum_{k=0}^{n-1}\left(R_{k,n}+H_{k,n}\right)\overset{\widehat{\P}^{a_0,b_0}}{\longrightarrow}0.
	\end{align}	
	Furthermore, if we assume the additional condition $n\Delta_n^{3}\to 0$ for the critical case and $\Delta_n^{2}e^{-b_0n\Delta_n}\to 0$ for the supercritical case, then \eqref{remainder} remains valid for both cases.
\end{lemma}
The proof of this lemma will be given in Section $5$. 

From \eqref{generalexpansion} and Lemma \ref{negligibleterms}, to prove the main results, it suffices to show that as $n\to\infty$,
\begin{align*} 
&\sum_{k=0}^{n-1}\left(\xi_{k,n}+\eta_{k,n}\right)\overset{\mathcal{L}(\widehat{\P}^{a_0,b_0})}{\longrightarrow} z^{\ast}U(a_0,b_0)-\dfrac{1}{2}z^{\ast} I(a_0,b_0)z.
\end{align*}
In order to control the convergence in the subcritical case, we will need the following classical discrete ergodic theorem which is analogous to \cite[Lemma 3.1]{G02} established in the case of ergodic diffusions with globally Lipschitz coefficients.
\begin{lemma}\label{c3ergodic} Let $b_0>0$. Then, for all $p<\frac{1}{2}(\frac{a_0}{\sigma}-1)$, as $n\to\infty$,
	\begin{equation*}
	\dfrac{1}{n}\sum_{k=0}^{n-1}\frac{1}{(X_{t_k}^{a_0,b_0})^p}\overset{\widehat{\P}^{a_0,b_0}}{\longrightarrow}\int_{0}^{\infty}\frac{1}{x^p}\pi_{a_0,b_0}(dx).
	\end{equation*}
\end{lemma}
\begin{proof} First, using It\^o's formula, \textnormal{(i)} of Lemma \ref{moment2}, and Burkholder-Davis-Gundy's (BDG's) inequality, for any $t_k\leq s\leq t_{k+1}$ we get that
	\begin{align*}
	&\widehat{\E}^{a_0,b_0}\Big[\big\vert \frac{1}{(X_s^{a_0,b_0})^p}-\frac{1}{(X_{t_k}^{a_0,b_0})^p}\big\vert\Big]=\widehat{\E}^{a_0,b_0}\Big[\big\vert \int_{t_k}^{s}\big(\frac{b_0p}{(X_u^{a_0,b_0})^{p}}+((p+1)\sigma-a_0)\frac{p}{(X_u^{a_0,b_0})^{p+1}}\big)du\\
	&-\sqrt{2\sigma}p\int_{t_k}^{s}\frac{dB_u}{(X_u^{a_0,b_0})^{p+\frac{1}{2}}}\big\vert\Big]\leq C(s-t_k)\big(\sup_{u\in[t_k,s]}\widehat{\E}^{a_0,b_0}[\frac{1}{(X_u^{a_0,b_0})^{p}}]+\sup_{u\in[t_k,s]}\widehat{\E}^{a_0,b_0}[\frac{1}{(X_u^{a_0,b_0})^{p+1}}]\big)\\
	&+C\big(\widehat{\E}^{a_0,b_0}\big[\big\vert \int_{t_k}^{s}\frac{dB_u}{(X_u^{a_0,b_0})^{p+\frac{1}{2}}}\big\vert^2\big]\big)^{\frac{1}{2}}\leq C\Delta_n+C\big(\int_{t_k}^{s}\widehat{\E}^{a_0,b_0}\big[\frac{1}{(X_u^{a_0,b_0})^{2p+1}}\big]du\big)^{\frac{1}{2}} 	\\ &\leq C\Delta_n+C\big((s-t_k)\sup_{u\in[t_k,s]}\widehat{\E}^{a_0,b_0}\big[\frac{1}{(X_u^{a_0,b_0})^{2p+1}}\big]\big)^{\frac{1}{2}}\leq C\sqrt{\Delta_n},	
	\end{align*}
	for some constant $C>0$ which is independent of $t_k$ and $s$, where the moments above are bounded thanks to $p<\frac{1}{2}(\frac{a_0}{\sigma}-1)$. This shows that
	\begin{align}\label{converg1}
	&\widehat{\E}^{a_0,b_0}\Big[\big\vert\frac{1}{n\Delta_n}\int_0^{n\Delta_n}\frac{ds}{(X_s^{a_0,b_0})^p}-\frac{1}{n}\sum_{k=0}^{n-1}\frac{1}{(X_{t_k}^{a_0,b_0})^p}\big\vert\Big]\notag\\
	&\leq\frac{1}{n\Delta_n}\sum_{k=0}^{n-1}\int_{t_k}^{t_{k+1}}\widehat{\E}^{a_0,b_0}\Big[\big\vert \frac{1}{(X_s^{a_0,b_0})^p}-\frac{1}{(X_{t_k}^{a_0,b_0})^p}\big\vert\Big]ds\leq C\sqrt{\Delta_n},
	\end{align}
	which tends to zero as $n\to\infty$. On the other hand, from \eqref{convergenceergodic}, as $n\to\infty$,
	\begin{align}\label{converg2}
	\frac{1}{n\Delta_n}\int_0^{n\Delta_n}\frac{ds}{(X_s^{a_0,b_0})^p}\longrightarrow\int_{0}^{\infty}\frac{1}{x^p}\pi_{a_0,b_0}(dx),\quad \widehat{\P}^{a_0,b_0}\textnormal{-a.s,}
	\end{align}
	where the limit above is finite. Thus, the result follows from \eqref{converg1} and \eqref{converg2}.
\end{proof}

\subsection{Proof of Theorem \ref{c2theorem}}
\label{sec:proof}
\begin{proof}
	From Lemma \ref{negligibleterms} with $(\varphi_{1,n}(a_0,b_0),\varphi_{2,n}(a_0,b_0))=(\frac{1}{\sqrt{n\Delta_n}},\frac{1}{\sqrt{n\Delta_n}})$, we need only to prove that
as $n\to\infty$,
	\begin{align*}
\sum_{k=0}^{n-1}\left(\xi_{k,n}+\eta_{k,n}\right)\overset{\mathcal{L}(\widehat{\P}^{a_0,b_0})}{\longrightarrow} z^{\ast}\mathcal{N}\left(0,I(a_0,b_0)\right)-\dfrac{1}{2}z^{\ast} I(a_0,b_0)z,
\end{align*}
where $\xi_{k,n}$ and $\eta_{k,n}$ are given by \eqref{xieta}. To do so, we will use Theorem \ref{clt}. At first, let us recall that in the subcritical case ($b>0$) the stationary distribution $\pi_{a,b}(dx)$ (see \eqref{gamma}) satisfies
\begin{align*}
\int_{0}^{\infty}x\pi_{a,b}(dx)=\dfrac{a}{b},\;
\int_{0}^{\infty}\dfrac{1}{x}\pi_{a,b}(dx)=\dfrac{b}{a-\sigma}.
\end{align*}
Then, by Lemma \ref{c3ergodic}, it is easy to check that as $n \rightarrow \infty$,
	\begin{align*}
	\sum_{k=0}^{n-1}\widehat{\E}^{a_0,b_0}[\xi_{k,n}\vert \widehat{\mathcal{F}}_{t_k}]=-\dfrac{u^2}{4\sigma}\dfrac{1}{n}\sum_{k=0}^{n-1}\frac{1}{X_{t_k}^{a_0,b_0}}&\overset{\widehat{\P}^{a_0,b_0}}{\longrightarrow}-\dfrac{u^2}{4\sigma}\int_{0}^{\infty}\frac{1}{x}\pi_{a_0,b_0}(dx)=-\dfrac{u^2}{2}\dfrac{b_0}{2\sigma(a_0-\sigma)},
	\end{align*}
	under condition $\frac{a}{\sigma}>3$, and 
	\begin{align*}
	&\sum_{k=0}^{n-1}\widehat{\E}^{a_0,b_0}[\eta_{k,n}\vert \widehat{\mathcal{F}}_{t_k}]=\dfrac{uv}{2\sigma}-\dfrac{v^2}{4\sigma}\dfrac{1}{n}\sum_{k=0}^{n-1}X_{t_k}^{a_0,b_0}\overset{\widehat{\P}^{a_0,b_0}}{\longrightarrow}\dfrac{uv}{2\sigma}-\dfrac{v^2}{4\sigma}\int_{0}^{\infty}x\pi_{a_0,b_0}(dx)=\dfrac{uv}{2\sigma}-\dfrac{v^2}{2}\dfrac{a_0}{2\sigma b_0},
	\end{align*}
	which gives condition (i) of Theorem \ref{clt}.
	
	Next, we check condition (ii) of Theorem \ref{clt} similarly by using the same arguments. We get
	\begin{align*}
	&\sum_{k=0}^{n-1}\big(\widehat{\E}^{a_0,b_0}[\xi_{k,n}^2\vert \widehat{\mathcal{F}}_{t_k}]-\big(\widehat{\E}^{a_0,b_0}[\xi_{k,n}\vert\widehat{\mathcal{F}}_{t_k}]\big)^2\big)=\dfrac{u^2}{2\sigma}\dfrac{1}{n}\sum_{k=0}^{n-1}\frac{1}{X_{t_k}^{a_0,b_0}}\overset{\widehat{\P}^{a_0,b_0}}{\longrightarrow}u^2\dfrac{b_0}{2\sigma(a_0-\sigma)},\\
	&\sum_{k=0}^{n-1}\big(\widehat{\E}^{a_0,b_0}[\eta_{k,n}^2\vert \widehat{\mathcal{F}}_{t_k}]-\big(\widehat{\E}^{a_0,b_0}[\eta_{k,n}\vert \widehat{\mathcal{F}}_{t_k}]\big)^2\big)=\dfrac{v^2}{2\sigma}\dfrac{1}{n}\sum_{k=0}^{n-1}X_{t_k}^{a_0,b_0}\overset{\widehat{\P}^{a_0,b_0}}{\longrightarrow}v^2\dfrac{a_0}{2\sigma b_0},\\
	&\sum_{k=0}^{n-1}\big(\widehat{\E}^{a_0,b_0}[\xi_{k,n}\eta_{k,n}\vert \widehat{\mathcal{F}}_{t_k}]-\widehat{\E}^{a_0,b_0}[\xi_{k,n}\vert \widehat{\mathcal{F}}_{t_k}]\widehat{\E}^{a_0,b_0}[\eta_{k,n}\vert \widehat{\mathcal{F}}_{t_k}]\big)=-\dfrac{uv}{2\sigma},
	\end{align*}
	where the first convergence holds under condition $\frac{a_0}{\sigma}>3$. Finally, it remains to check condition (iii) of Theorem \ref{clt}. Indeed, there exists $c>0$ such that
\begin{align*}
	&\sum_{k=0}^{n-1}\widehat{\E}^{a_0,b_0}[\xi_{k,n}^4\vert \widehat{\mathcal{F}}_{t_k}]\leq \dfrac{c}{n^2}\sum_{k=0}^{n-1}\dfrac{1}{(X_{t_k}^{a_0,b_0})^2}+\dfrac{c}{n^4}\sum_{k=0}^{n-1}\dfrac{1}{(X_{t_k}^{a_0,b_0})^4},\\
	&\sum_{k=0}^{n-1}\widehat{\E}^{a_0,b_0}[\eta_{k,n}^4\vert \widehat{\mathcal{F}}_{t_k}]\leq \dfrac{c}{n^2}\sum_{k=0}^{n-1}(X_{t_k}^{a_0,b_0})^2+\dfrac{c}{n^3}+\dfrac{c}{n^4}\sum_{k=0}^{n-1}(X_{t_k}^{a_0,b_0})^4,
\end{align*}
for some constant $c>0$. Thus applying Lemma \ref{moment2} \textnormal{(i)}, under condition $\frac{a_0}{\sigma}>4$ we have $\widehat{\E}^{a_0,b_0}[\sum_{k=0}^{n-1}\widehat{\E}^{a_0,b_0}[\xi_{k,n}^4\vert \widehat{\mathcal{F}}_{t_k}]]\leq \frac{C}{n}$ and $\widehat{\E}^{a_0,b_0}[\sum_{k=0}^{n-1}\widehat{\E}^{a_0,b_0}[\eta_{k,n}^4\vert \widehat{\mathcal{F}}_{t_k}]]\leq \frac{C}{n}$, which tends to zero. Clearly, this condition is satisfied under {\bf(A)}. Thus, the result follows.
\end{proof}

\subsection{Proof of Theorem \ref{c2theorem2}}
\begin{proof}
	From \eqref{generalexpansion}, Lemma \ref{negligibleterms} with $(\varphi_{1,n}(a_0,0),\varphi_{2,n}(a_0,0))=(\frac{1}{\sqrt{\log(n\Delta_n)}},\frac{1}{n\Delta_n})$, we need only to prove that
	as $n\to\infty$, 
	\begin{align}\label{laq1}
	\sum_{k=0}^{n-1}\left(\xi_{k,n}+\eta_{k,n}\right)\overset{\mathcal{L}(\widehat{\P}^{a_0,0})}{\longrightarrow} z^{\ast}U(a_0,0)-\dfrac{1}{2}z^{\ast}I(a_0,0)z,
	\end{align}
	and
	\begin{align}\label{laq2}
	\widehat{\E}^{a_0,0}\left[e^{z^{\ast}U(a_0,0)-\frac{1}{2}z^{\ast}I(a_0,0)z}\right]=1.
	\end{align}
To do this, we rewrite
	\begin{equation}\begin{split}\label{cr1}
	\sum_{k=0}^{n-1}\left(\xi_{k,n}+\eta_{k,n}\right)&=z^{\ast}U_n(a_0,0)-\dfrac{1}{2}z^{\ast}I_n(a_0,0)z+R_{n,1}^{a_0,0}+R_{n,2}^{a_0,0}+\dfrac{uv}{2\sigma\sqrt{\log(n\Delta_n)}},
	\end{split}
	\end{equation}		
	where $t_n=n\Delta_n$ and
	\begin{align*}
	&U_n(a_0,0)=\dfrac{1}{\sqrt{2\sigma}}\begin{pmatrix}\dfrac{1}{\sqrt{\log t_n}}\displaystyle\int_{0}^{t_n}\frac{dB_s}{\sqrt{X_s^{a_0,0}}}\\
	-\dfrac{1}{t_n}\displaystyle\int_{0}^{t_n}\sqrt{X_s^{a_0,0}}dB_s
	\end{pmatrix},\\
	&I_n(a_0,0)=\dfrac{1}{2\sigma}\begin{pmatrix}\dfrac{1}{\log t_n}\displaystyle\int_{0}^{t_n}\frac{ds}{X_s^{a_0,0}}&0\\
	0&\dfrac{1}{t_n^2}\displaystyle\int_{0}^{t_n}X_s^{a_0,0}ds
	\end{pmatrix},\\
	R_{n,1}^{a_0,0}&=\dfrac{u^2}{4\sigma\log t_n}\sum_{k=0}^{n-1}\int_{t_k}^{t_{k+1}}\big(\frac{1}{X_s^{a_0,0}}-\frac{1}{X_{t_k}^{a_0,0}}\big)ds+\dfrac{v^2}{4\sigma t_n^2}\sum_{k=0}^{n-1}\int_{t_k}^{t_{k+1}}(X_s^{a_0,0}-X_{t_k}^{a_0,0})ds,\\
	R_{n,2}^{a_0,0}&=-\dfrac{u}{\sqrt{2\sigma}\sqrt{\log t_n}}\sum_{k=0}^{n-1}\int_{t_k}^{t_{k+1}}(\frac{1}{\sqrt{X_s^{a_0,0}}}-\frac{1}{\sqrt{X_{t_k}^{a_0,0}}})dB_s\\
	&\qquad+\dfrac{v}{\sqrt{2\sigma}t_n}\sum_{k=0}^{n-1}\int_{t_k}^{t_{k+1}}(\sqrt{X_s^{a_0,0}}-\sqrt{X_{t_k}^{a_0,0}})dB_s.
	\end{align*}
	First, using It\^o's formula and equation \eqref{c2eq1}, we get that	
	\begin{equation*}\begin{split}
	U_n(a_0,0)=\dfrac{1}{\sqrt{2\sigma}}\begin{pmatrix}\dfrac{1}{\sqrt{2\sigma}\sqrt{\log t_n}}\Big(\log X_{t_n}^{a_0,0}-\log x_0+(\sigma-a_0) \displaystyle\int_{0}^{t_n}\frac{ds}{X_s^{a_0,0}}\Big)\\
	-\dfrac{1}{\sqrt{2\sigma}t_n}\big(X_{t_n}^{a_0,0}-x_0-a_0t_n\big)
	\end{pmatrix}.
	\end{split}
	\end{equation*}
	Moreover, it follows from the proof of \cite[Theorem 6]{BK13} and \eqref{criticalconvergence} that as $n\to\infty$,
	\begin{align*}
	&\left(\dfrac{\log X_{t_n}^{a_0,0}+(\sigma-a_0) \displaystyle\int_{0}^{t_n}\frac{ds}{X_s^{a_0,0}}}{\sqrt{\log t_n}},\ \dfrac{1}{t_n^2}\displaystyle\int_{0}^{t_n}X_s^{a_0,0}ds,\ \dfrac{X_{t_n}^{a_0,0}}{t_n},\ \dfrac{1}{\log t_n}\displaystyle\int_{0}^{t_n}\frac{ds}{X_s^{a_0,0}}\right)\\
	&\qquad\qquad\overset{\mathcal{L}(\widehat{\P}^{a_0,0})}{\longrightarrow}\left(\sqrt{\dfrac{2\sigma}{a_0-\sigma}}G,\ \int_0^1R_s^{a_0,0}ds,\ R_1^{a_0,0},\ \frac{1}{a_0-\sigma}\right),
	\end{align*}
	where $G$ is a standard normal random variable independent of $(R_1^{a_0,0},\int_0^1R_s^{a_0,0}ds)$. This implies that as $n\to\infty$,	
	\begin{equation}\begin{split}\label{cr2}
	\left(U_n(a_0,0),I_n(a_0,0)\right)\overset{\mathcal{L}(\widehat{\P}^{a_0,0})}{\longrightarrow}\left(U(a_0,0),I(a_0,0)\right).
	\end{split}
	\end{equation}
	Next, thanks to the proof of \cite[Theorem 9]{BK13}, under condition $\frac{a_0}{\sigma}>2$, we have that $R_{n,1}^{a_0,0}\overset{\widehat{\P}^{a_0,0}}{\longrightarrow}0$ as $n\to\infty$. Now, we treat $R_{n,2}^{a_0,0}$. For this, we write $R_{n,2}^{a_0,0}=\sum_{k=0}^{n-1}(\zeta_{k,n,1}+\zeta_{k,n,2})$, where 
	\begin{align*}
	\zeta_{k,n,1}&=-\dfrac{u}{\sqrt{2\sigma}\sqrt{\log t_n}}\int_{t_k}^{t_{k+1}}(\frac{1}{\sqrt{X_s^{a_0,0}}}-\frac{1}{\sqrt{X_{t_k}^{a_0,0}}})dB_s,\\
	\zeta_{k,n,2}&=	\dfrac{v}{\sqrt{2\sigma}t_n}\int_{t_k}^{t_{k+1}}(\sqrt{X_s^{a_0,0}}-\sqrt{X_{t_k}^{a_0,0}})dB_s.
	\end{align*}
	We apply Lemma \ref{zero} to $\zeta_{k,n,1}$ and $\zeta_{k,n,2}$. As it is obvious that $\sum_{k=0}^{n-1}\widehat{\E}^{a_0,0}[\zeta_{k,n,1}\vert \widehat{\mathcal{F}}_{t_k}]=0$, we only need to check that $\sum_{k=0}^{n-1}\widehat{\E}^{a_0,0}[\zeta_{k,n,1}^2\vert \widehat{\mathcal{F}}_{t_k}]\overset{\widehat{\P}^{a_0,0}}{\longrightarrow}0$. To do so, we use It\^o's formula, Lemma \ref{moment2} \textnormal{(ii)} and standard calculations to get
	\begin{align*}
	&\widehat{\E}^{a_0,0}\Big[\big\vert\sum_{k=0}^{n-1}\widehat{\E}^{a_0,0}[\zeta_{k,n,1}^2\vert \widehat{\mathcal{F}}_{t_k}]\big\vert\Big]=\sum_{k=0}^{n-1}\widehat{\E}^{a_0,0}[\widehat{\E}^{a_0,0}[\zeta_{k,n,1}^2\vert \widehat{\mathcal{F}}_{t_k}]]=\sum_{k=0}^{n-1}\widehat{\E}^{a_0,0}[\zeta_{k,n,1}^2]\\
	&=\dfrac{u^2}{2\sigma\log t_n}\sum_{k=0}^{n-1}\int_{t_k}^{t_{k+1}}\widehat{\E}^{a_0,0}\Big[\big(\int_{t_k}^{s}(\frac{-a_0}{2}+\frac{3\sigma}{4})\frac{du}{(X_u^{a_0,0})^{\frac{3}{2}}}+\int_{t_k}^{s}\sqrt{\frac{\sigma}{2}}\frac{dB_u}{X_u^{a_0,0}}\big)^2\Big]ds\\
	&\leq C\dfrac{u^2}{\log t_n}\sum_{k=0}^{n-1}\int_{t_k}^{t_{k+1}}\Big((s-t_k)^2\sup_{u\in[t_k,s]}\widehat{\E}^{a_0,0}\big[\frac{1}{(X_u^{a_0,0})^{3}}\big]+(s-t_k)\sup_{u\in[t_k,s]}\widehat{\E}^{a_0,0}\big[\frac{1}{(X_u^{a_0,0})^2}\big]\Big)ds\\
	&\leq C\dfrac{u^2\Delta_n}{\log (n\Delta_n)}\sum_{k=0}^{n-1}\int_{t_k}^{t_{k+1}}\big(\sup_{u\in[t_k,s]}\widehat{\E}^{a_0,0}\big[\frac{1}{(X_u^{a_0,0})^{3}}\big]+\sup_{u\in[t_k,s]}\widehat{\E}^{a_0,0}\big[\frac{1}{(X_u^{a_0,0})^2}\big]\big)ds\\
	&\leq \dfrac{u^2\Delta_n}{\log (n\Delta_n)}\big(C_1+C_2\big(\dfrac{1}{(n\Delta_n)^2}+\dfrac{1}{n\Delta_n}\big)\big),
	\end{align*}
	for some constants $C_1, C_2$, which tends to zero under $\frac{a_0}{\sigma}>3$. Thus,  $\sum_{k=0}^{n-1}\zeta_{k,n,1}\overset{\widehat{\P}^{a_0,0}}{\longrightarrow}0$.
	
	Similarly, we have $\sum_{k=0}^{n-1}\widehat{\E}^{a_0,0}[\zeta_{k,n,2}\vert \widehat{\mathcal{F}}_{t_k}]=0$. Next, applying It\^o's formula, Lemma \ref{moment2} \textnormal{(ii)} and standard calculations, we get
	\begin{align*}
	&\widehat{\E}^{a_0,0}[\vert\sum_{k=0}^{n-1}\widehat{\E}^{a_0,0}[\zeta_{k,n,2}^2\vert \widehat{\mathcal{F}}_{t_k}]\vert]=\sum_{k=0}^{n-1}\widehat{\E}^{a_0,0}[\zeta_{k,n,2}^2]=\dfrac{v^2}{2\sigma t_n^2}\sum_{k=0}^{n-1}\int_{t_k}^{t_{k+1}}\widehat{\E}^{a_0,0}[(\sqrt{X_s^{a_0,0}}-\sqrt{X_{t_k}^{a_0,0}})^2]ds\\
	&=\dfrac{v^2}{2\sigma t_n^2}\sum_{k=0}^{n-1}\int_{t_k}^{t_{k+1}}\widehat{\E}^{a_0,0}\Big[\big(\int_{t_k}^{s}(\frac{a_0}{2}-\frac{\sigma}{4})\frac{du}{\sqrt{X_u^{a_0,0}}}+\int_{t_k}^{s}\sqrt{\frac{\sigma}{2}}dB_u\big)^2\Big]ds\\
	&\leq C\dfrac{v^2}{t_n^2}\sum_{k=0}^{n-1}\int_{t_k}^{t_{k+1}}\big((s-t_k)^2\sup_{u\in[t_k,s]}\widehat{\E}^{a_0,0}\big[\frac{1}{X_u^{a_0,0}}\big]+(s-t_k)\big)ds\\
	&\leq Cv^2\dfrac{1}{(n\Delta_n)^2}\big(\Delta_n^2(C_1+C_2\log (n\Delta_n))+n\Delta_n^2\big),
	\end{align*}
	which tends to zero. Thus, $\sum_{k=0}^{n-1}\widehat{\E}^{a_0,0}[\zeta_{k,n,2}^2\vert \widehat{\mathcal{F}}_{t_k}]\overset{\widehat{\P}^{a_0,0}}{\longrightarrow}0$. Then, we have  $\sum_{k=0}^{n-1}\zeta_{k,n,2}\overset{\widehat{\P}^{a_0,0}}{\longrightarrow}0$. Consequently, under condition $\frac{a_0}{\sigma}>3$, we have that $R_{n,2}^{a_0,0}\overset{\widehat{\P}^{a_0,0}}{\longrightarrow}0$ as $n\to\infty$. Therefore, under $\frac{a_0}{\sigma}>3$, $R_{n,1}^{a_0,0}+R_{n,2}^{a_0,0}\overset{\widehat{\P}^{a_0,0}}{\longrightarrow}0$ as $n\to\infty$. This combined with \eqref{cr1}-\eqref{cr2} gives \eqref{laq1}.
	
	Finally, we treat \eqref{laq2}. For this, let us consider the CIR process $R^{a,b}=(R_t^{a,b})_{t\geq 0}$ starting from $0$ defined by
	\begin{equation*}
	dR_t^{a,b}=(a-b R_t^{a,b})dt+\sqrt{2\sigma R_t^{a,b}}dB_t,
	\end{equation*}
	where $R_0^{a,b}=0$, $(a,b)\in \R^{\ast}_+\times\R$. We denote by $\P_{R}^{a,b}$ the probability measure induced by the CIR process $R^{a,b}$ on the measurable space $(C(\R_{+},\R),\mathcal{B}(C(\R_{+},\R))$. Let $T>0$. For any $t>0$, let $\P_{R,t}^{a,b}$ be the restriction of $\P_{R}^{a,b}$ on $\widehat{\mathcal{F}}_t$. As a consequence of \cite[Lemma 3.1]{BP16}, for any $a>\sigma$, $b\in\R$, the probability measures $\P_{R,t}^{a,0}$ and $\P_{R,t}^{a,b}$ are absolutely continuous with respect to each other and its Radon-Nikodym derivative is given by
	\begin{equation}\label{RN}
	\dfrac{d\P_{R,t}^{a,b}}{d\P_{R,t}^{a,0}}\big((R_s^{a,0})_{s\in[0,t]}\big)=\exp\Big\{\dfrac{b}{\sqrt{2\sigma}}\int_0^t\sqrt{R_s^{a,0}}dB_s-\dfrac{b^2}{4\sigma}\int_0^tR_s^{a,0}ds\Big\}.
	\end{equation}
	On the other hand, as a consequence of Theorem 3.4 in Chapter III of Jacod and Shiryaev \cite{JS03}, the Radon-Nikodym derivative process	$
	(\frac{d\P_{R,t}^{a,b}}{d\P_{R,t}^{a,0}}((R_s^{a,0})_{s\in[0,t]}))_{t\in [0,T]}$ is a martingale w.r.t. the filtration $(\widehat{\mathcal{F}}_t)_{t\in [0,T]}$.
	
	Now, using the independence between $G$ and $(R_1^{a_0,0},\int_0^1R_s^{a_0,0}ds)$, we have 
	$$
	\widehat{\E}^{a_0,0}\big[e^{z^{\ast}U(a_0,0)-\frac{1}{2}z^{\ast}I(a_0,0)z}\big]=\E_1\E_2,
	$$
	where 
	\begin{align*}
	\E_1=\widehat{\E}^{a_0,0}\big[e^{\frac{u}{\sqrt{2\sigma(a_0-\sigma)}}G-\frac{u^2}{4\sigma(a_0-\sigma)}}\big],\;
	\E_2=\widehat{\E}^{a_0,0}\big[e^{\frac{v}{2\sigma}(a_0-R_1^{a_0,0})-\frac{v^2}{4\sigma}\int_0^1R_s^{a_0,0}ds}\big].
	\end{align*}
	Clearly, $\E_1=1$ since $G$ is the standard normal random variable. Using equation \eqref{cirr0} and \eqref{RN}, we have that
	\begin{align*}
	\E_2=\widehat{\E}^{a_0,0}\big[e^{\frac{v}{\sqrt{2\sigma}}\int_0^1\sqrt{R_s^{a_0,0}}dB_s-\frac{v^2}{4\sigma}\int_0^1R_s^{a_0,0}ds}\big]=\widehat{\E}^{a_0,0}\big[\dfrac{d\P_{R,1}^{a_0,v}}{d\P_{R,1}^{a_0,0}}\left((R_s^{a_0,0})_{s\in[0,1]}\right)\big].
	\end{align*}
	Using the fact that the Radon-Nikodym derivative process
	$(\frac{d\P_{R,t}^{a_0,v}}{d\P_{R,t}^{a_0,0}}((R_s^{a_0,0})_{s\in[0,t]}))_{t\in [0,T]}$ is a martingale w.r.t. the filtration $(\widehat{\mathcal{F}}_t)_{t\in [0,T]}$, we get that
	$$
	\widehat{\E}^{a_0,0}\big[\dfrac{d\P_{R,1}^{a_0,v}}{d\P_{R,1}^{a_0,0}}\big((R_s^{a_0,0})_{s\in[0,1]}\big)\big]=\widehat{\E}^{a_0,0}\big[\dfrac{d\P_{R,0}^{a_0,v}}{d\P_{R,0}^{a_0,0}}(R_0^{a_0,0})\big]=1,
	$$
	which implies that $\E_2=1$. This concludes \eqref{laq2}. Thus, the result follows.
\end{proof}	

\subsection{Proof of Corollary \ref{oneparacritical}}
\label{proofoneparacri}
\begin{proof}
	\textnormal{(i)} When $b=0$ is known and $a$ is unknown parameter, the result is obtained as a consequence of Theorem \ref{c2theorem2} by taking $v=0$. This is reason why the condition $n\Delta_n^{3}\to 0$ as $n\to\infty$ is removed.
	
	\textnormal{(ii)} When $a$ is known and $b=0$ is unknown parameter, the result is obtained as a consequence of Theorem \ref{c2theorem2} by taking $u=0$. 
\end{proof}	

\subsection{Proof of Theorem \ref{c2theorem4}}
\begin{proof}
	From \eqref{generalexpansion}, Lemma \ref{negligibleterms} with $(\varphi_{1,n}(a_0,b_0),\varphi_{2,n}(a_0,b_0))=(0,e^{b_0\frac{n\Delta_n}{2}})$, we need only to prove that
	as $n\to\infty$,
	\begin{align}\label{notlaq}
	\sum_{k=0}^{n-1}\left(\xi_{k,n}+\eta_{k,n}\right)\overset{\mathcal{L}(\widehat{\P}^{a_0,b_0})}{\longrightarrow} v\sqrt{I(b_0)}\mathcal{N}\left(0,1\right)-\dfrac{v^2}{2}I(b_0).
	\end{align}
 To do this, we rewrite
	\begin{equation}\begin{split}\label{supercr}
	\sum_{k=0}^{n-1}\left(\xi_{k,n}+\eta_{k,n}\right)=vU_n(a_0,b_0)-\dfrac{v^2}{2}I_n(a_0,b_0)+R_{n,3}^{a_0,b_0}+R_{n,4}^{a_0,b_0},
	\end{split}
	\end{equation}	
	where 
	\begin{align*}
	&U_n(a_0,b_0)=\dfrac{-e^{b_0\frac{t_n}{2}}}{\sqrt{2\sigma}}\int_{0}^{t_n}\sqrt{X_s^{a_0,b_0}}dB_s,\; I_n(a_0,b_0)=\dfrac{1}{2\sigma}e^{b_0t_n}\int_{0}^{t_n}X_s^{a_0,b_0}ds,\\
	&R_{n,3}^{a_0,b_0}=\dfrac{v^2}{4\sigma e^{-b_0n\Delta_n}}\sum_{k=0}^{n-1}\int_{t_k}^{t_{k+1}}(X_s^{a_0,b_0}-X_{t_k}^{a_0,b_0})ds,\\
	&R_{n,4}^{a_0,b_0}=\dfrac{v}{\sqrt{2\sigma}e^{-b_0\frac{n\Delta_n}{2}}}\sum_{k=0}^{n-1}\int_{t_k}^{t_{k+1}}\big(\sqrt{X_s^{a_0,b_0}}-\sqrt{X_{t_k}^{a_0,b_0}}\big)dB_s.
	\end{align*}	
	First, it follows from the proof of \cite[Theorem 7.1]{BP16} that as $n\to\infty$,
	\begin{align*}
	&\Big(e^{b_0\frac{t_n}{2}}\displaystyle\int_{0}^{t_n}\sqrt{X_s^{a_0,b_0}}dB_s,\ e^{b_0t_n}\displaystyle\int_{0}^{t_n}X_s^{a_0,b_0}ds\Big)\overset{\mathcal{L}(\widehat{\P}^{a_0,b_0})}{\longrightarrow}\Big(\big(-\dfrac{1}{b_0}R_{-\frac{1}{b_0}}^{a_0}\big)^{\frac{1}{2}}\mathcal{N}\left(0,1\right),\ -\dfrac{1}{b_0}R_{-\frac{1}{b_0}}^{a_0}\Big),
	\end{align*}
	where $\mathcal{N}\left(0,1\right)$ is a standard normal random variable independent of $R_{-\frac{1}{b_0}}^{a_0}$. This implies that 	
	\begin{equation}\begin{split}\label{supcr2}
	(U_n(a_0,b_0),I_n(a_0,b_0))\overset{\mathcal{L}(\widehat{\P}^{a_0,b_0})}{\longrightarrow}(\sqrt{I(b_0)}\mathcal{N}\left(0,1\right),I(b_0)).
	\end{split}
	\end{equation}
	Next, applying It\^o's formula, Lemma \ref{moment2} \textnormal{(iii)}, \eqref{supercriticalconvergence2} and standard calculations, we get
	\begin{align*}
	&\widehat{\E}^{a_0,b_0}\big[\vert R_{n,3}^{a_0,b_0}\vert\big]\leq \dfrac{v^2}{4\sigma e^{-b_0n\Delta_n}}\sum_{k=0}^{n-1}\int_{t_k}^{t_{k+1}}\widehat{\E}^{a_0,b_0}\Big[\big\vert \int_{t_k}^{s}(a_0-b_0X_u^{a_0,b_0})du+\int_{t_k}^{s}\sqrt{2\sigma X_u^{a_0,b_0}} dB_u\big\vert\Big]ds\\
	&\leq \dfrac{v^2}{4\sigma e^{-b_0n\Delta_n}}\sum_{k=0}^{n-1}\int_{t_k}^{t_{k+1}}\Big((s-t_k)+(s-t_k)\sup_{u\in[t_k,s]}\widehat{\E}^{a_0,b_0}\big[X_u^{a_0,b_0}\big]\\
	&\qquad+\big((s-t_k)\sup_{u\in[t_k,s]}\widehat{\E}^{a_0,b_0}\big[X_u^{a_0,b_0}\big]\big)^{\frac{1}{2}}\Big)ds\leq C\big(n\Delta_n^2e^{b_0n\Delta_n}+\Delta_n+\sqrt{\Delta_n}e^{\frac{1}{2}b_0n\Delta_n}\big),
	\end{align*}
	which tends to zero. Thus, $R_{n,3}^{a_0,b_0}\overset{\widehat{\P}^{a_0,b_0}}{\longrightarrow}0$. Finally, we treat $R_{n,4}^{a_0,b_0}$ by applying Lemma \ref{zero}. First, we have $\sum_{k=0}^{n-1}\widehat{\E}^{a_0,b_0}[R_{n,4}^{a_0,b_0}\vert \widehat{\mathcal{F}}_{t_k}]=0$. Next, applying It\^o's formula, Lemma \ref{moment2} \textnormal{(iii)}, \eqref{supercriticalconvergence2} and standard calculations, we get
	\begin{align*}
	&\widehat{\E}^{a_0,b_0}\Big[\big\vert\sum_{k=0}^{n-1}\widehat{\E}^{a_0,b_0}[(R_{n,4}^{a_0,b_0})^2\vert \widehat{\mathcal{F}}_{t_k}]\big\vert\Big]=\sum_{k=0}^{n-1}\widehat{\E}^{a_0,b_0}[(R_{n,4}^{a_0,b_0})^2]\\	
	&=\dfrac{v^2}{2\sigma e^{-b_0n\Delta_n}}\sum_{k=0}^{n-1}\int_{t_k}^{t_{k+1}}\widehat{\E}^{a_0,b_0}\Big[\big(\int_{t_k}^{s}((\frac{a_0}{2}-\frac{\sigma}{4})\frac{1}{\sqrt{X_u^{a_0,b_0}}}-\frac{b_0}{2}\sqrt{X_u^{a_0,b_0}})du+\int_{t_k}^{s}\sqrt{\frac{\sigma}{2}}dB_u\big)^2\Big]ds\\
	&\leq \dfrac{Cv^2}{e^{-b_0n\Delta_n}}\sum_{k=0}^{n-1}\int_{t_k}^{t_{k+1}}\big((s-t_k)^2\big(\sup_{u\in[t_k,s]}\widehat{\E}^{a_0,b_0}\big[\frac{1}{X_u^{a_0,b_0}}\big]+\sup_{u\in[t_k,s]}\widehat{\E}^{a_0,b_0}\big[X_u^{a_0,b_0}\big]\big)+(s-t_k)\big)ds\\
	&\leq Ce^{b_0n\Delta_n}\Big(\Delta_n^2+\Delta_n^2\log(n\Delta_n)+n\Delta_n^2\Big)+C\Delta_n^2,
	\end{align*}
	which tends to zero under condition $\frac{a_0}{\sigma}>1$. Thus, $\sum_{k=0}^{n-1}\widehat{\E}^{a_0,b_0}[(R_{n,4}^{a_0,b_0})^2\vert \widehat{\mathcal{F}}_{t_k}]\overset{\widehat{\P}^{a_0,b_0}}{\longrightarrow}0$. Then, we have  $\sum_{k=0}^{n-1}R_{n,4}^{a_0,b_0}\overset{\widehat{\P}^{a_0,b_0}}{\longrightarrow}0$. Therefore, under $\frac{a_0}{\sigma}>1$ as $n\to\infty$
	\begin{equation}\begin{split}\label{supercr4}	
	R_{n,3}^{a_0,b_0}+R_{n,4}^{a_0,b_0}\overset{\widehat{\P}^{a_0,b_0}}{\longrightarrow}0.	
	\end{split}
	\end{equation}
	Therefore, from \eqref{supercr}-\eqref{supercr4}, we conclude \eqref{notlaq}.
\end{proof}

\section{Technical results}
\label{sec:prelim}
The aim of this section is to use techniques of Malliavin calculus in order to represent the remainder terms in Lemma \ref{negligibleterms} as a conditional expectation of Skorohod integral (see Proposition \ref{c2prop1}) that will be analyzed in Section \ref{prooflemma21}. To be more precise, we first consider on a complete probability space $(\widetilde{\Omega}, \widetilde{\mathcal{F}}, \widetilde{\P})$ the flow process $Y^{a,b}(s,x)=(Y_t^{a,b}(s,x), t\geq s)$, $x\in\R^{\ast}_+$ on the time interval $[s,\infty)$ and with initial condition $Y_{s}^{a,b}(s,x)=x$ satisfying
\begin{equation}\label{flow}
Y_t^{a,b}(s,x)=x+\int_s^t \big(a-bY_u^{a,b}(s,x)\big)du +\int_s^t\sqrt{2\sigma Y_u^{a,b}(s,x)}dW_u,
\end{equation}
for any $t\geq s$, where $W=(W_t)_{t\geq 0}$ is a standard Brownian motion. Let $\{\widetilde{\mathcal{F}}_t\}_{t\geq 0}$ denote the natural filtration generated by $W$. For all $t\geq 0$ we denote $Y_t^{a,b}\equiv Y_t^{a,b}(0,x_0)$ which is defined by
\begin{equation}\label{c2eq1rajoute}
Y_t^{a,b}=x_0+\int_0^t \big(a-bY_s^{a,b}\big)ds +\int_0^t\sqrt{2\sigma Y_s^{a,b}}dW_s.
\end{equation}
Then, the Malliavin calculus on the Wiener space induced by $W$ will be applied to obtain a new representation of score functions. We refer the reader to Nualart \cite{N} for a detailed exposition of the Malliavin calculus and the notations therein. We denote by $D$ and $\delta$ the Malliavin derivative and the Skorohod integral w.r.t. $W$ on $[t_k,t_{k+1}]$. In what follows, let $\mathbb{D}^{1,2}$ denote the Sobolev space of random variables that are differentiable w.r.t. $W$ in the sense of Malliavin, and $\textnormal{Dom}\ \delta$ denote the domain of $\delta$. The Malliavin calculus for CIR process is developed e.g. in \cite{AE08, AN14}.  

For any $k \in \{0,...,n-1\}$, the process $(Y_t^{a,b}(t_k,x), t\in [t_k,t_{k+1}])$ is defined by
\begin{equation}\label{flowk}
Y_t^{a,b}(t_k,x)=x+\int_{t_k}^t \big(a-bY_u^{a,b}(t_k,x)\big)du +\int_{t_k}^t\sqrt{2\sigma Y_u^{a,b}(t_k,x)}dW_u.
\end{equation}
Recall that condition $a\geq \sigma$ ensures that the CIR process $Y^{a,b}$ remains almost surely strictly positive. Then, by \cite[Theorem V.39]{P05}, the process $(Y_t^{a,b}(t_k,x), t\in [t_k,t_{k+1}])$ is differentiable w.r.t. $x$ that we denote by $(\partial_{x}Y_t^{a,b}(t_k,x), t\in [t_k,t_{k+1}])$. Furthermore, this process admits derivatives w.r.t. $a$ and $b$ that we denote by $(\partial_{a}Y_t^{a,b}(t_k,x), t\in [t_k,t_{k+1}])$ and $(\partial_{b}Y_t^{a,b}(t_k,x), t\in [t_k,t_{k+1}])$, respectively, since this problem is similar to the derivative w.r.t. the initial
condition (see e.g. \cite[pages 294-295]{P16}). Under condition $a\geq \sigma$, these processes are solutions to the following equations
\begin{align}
&\partial_xY_t^{a,b}(t_k,x)=1-b\int_{t_k}^t \partial_xY_u^{a,b}(t_k,x)du +\int_{t_k}^t\frac{\sqrt{\sigma}}{\sqrt{2Y_u^{a,b}(t_k,x)}}\partial_xY_u^{a,b}(t_k,x)dW_u,\label{px}\\
&\partial_{a}Y_t^{a,b}(t_k,x)=\int_{t_k}^t\big(1-b\partial_{a}Y_u^{a,b}(t_k,x)\big)du +\int_{t_k}^t\frac{\sqrt{\sigma}}{\sqrt{2Y_u^{a,b}(t_k,x)}}\partial_aY_u^{a,b}(t_k,x)dW_u,\label{pa}\\
&\partial_{b}Y_t^{a,b}(t_k,x)=-\int_{t_k}^t\big(Y_u^{a,b}(t_k,x)+b\partial_{b}Y_u^{a,b}(t_k,x)\big)du +\int_{t_k}^t\frac{\sqrt{\sigma}\partial_bY_u^{a,b}(t_k,x)}{\sqrt{2Y_u^{a,b}(t_k,x)}}dW_u.\label{pb}
\end{align}
Therefore, their explicit solutions are respectively given by
\begin{align}
\partial_xY_t^{a,b}(t_k,x)&=\exp{\Big\{-b(t-t_k)-\dfrac{\sigma}{4}\int_{t_k}^t\dfrac{du}{Y_u^{a,b}(t_k,x)}+\sqrt{\dfrac{\sigma}{2}}\int_{t_k}^t\dfrac{dW_u}{\sqrt{Y_u^{a,b}(t_k,x)}}\Big\}},\label{dxe}\\
\partial_aY_t^{a,b}(t_k,x)&=\int_{t_k}^{t}\exp{\Big\{-b(t-r)-\dfrac{\sigma}{4}\int_{r}^t\dfrac{du}{Y_u^{a,b}(t_k,x)}+\sqrt{\dfrac{\sigma}{2}}\int_{r}^t\dfrac{dW_u}{\sqrt{Y_u^{a,b}(t_k,x)}}\Big\}}dr,\label{dxa}\\
\partial_bY_t^{a,b}(t_k,x)&=-\int_{t_k}^{t}Y_r^{a,b}(t_k,x)\exp{\big\{-b(t-r)-\frac{\sigma}{4}\int_{r}^t\frac{du}{Y_u^{a,b}(t_k,x)}+\sqrt{\frac{\sigma}{2}}\int_{r}^t\frac{dW_u}{\sqrt{Y_u^{a,b}(t_k,x)}}\big\}}dr.\label{dxb}
\end{align}
Observe that from \eqref{dxe}, \eqref{dxa} and \eqref{dxb}, we can write
\begin{align}
\partial_aY_t^{a,b}(t_k,x)&=\int_{t_k}^{t}\partial_xY_t^{a,b}(t_k,x)(\partial_xY_r^{a,b}(t_k,x))^{-1}dr,\label{dxa2}\\
\partial_bY_t^{a,b}(t_k,x)&=-\int_{t_k}^{t}Y_r^{a,b}(t_k,x)\partial_xY_t^{a,b}(t_k,x)(\partial_xY_r^{a,b}(t_k,x))^{-1}dr.\label{dxb2}
\end{align}
To analyze the score functions, some notations are introduced. Let $\widetilde{\P}^{a,b}$ denote the probability measure induced by  $Y^{a,b}=(Y_t^{a,b})_{t \geq 0}$ on the canonical space $(C(\R_{+},\R),\mathcal{B}(C(\R_{+},\R))$ endowed with the natural filtration $\{\widetilde{\mathcal{F}}_t\}_{t\geq 0}$, and $\widetilde{\E}^{a,b}$ denote the expectation w.r.t. $\widetilde{\P}^{a,b}$. For all $k \in \{0,...,n-1\}$ and $x\in\R^{\ast}_+$, we denote by $\widetilde{\P}_{t_k,x}^{a,b}$ and $\widetilde{\E}_{t_k,x}^{a,b}$ respectively the probability law of $Y^{a,b}$ starting at $x$ at time $t_k$ and the expectation w.r.t. $\widetilde{\P}_{t_k,x}^{a,b}$, i.e., $\widetilde{\P}_{t_k,x}^{a,b}(A)=\widetilde{\E}[{\bf 1}_{A}\vert Y_{t_{k}}^{a,b}=x]$ for all $A\in \widetilde{\mathcal{F}}$ and $\widetilde{\E}_{t_k,x}^{a,b}[V]=\widetilde{\E}[V\vert Y_{t_{k}}^{a,b}=x]$ for all $\widetilde{\mathcal{F}}$-measurable random variables $V$. Thus, $\widetilde{\E}_{t_k,x}^{a,b}$ is the expectation under the probability law of $Y^{a,b}$ starting at $x$ at time $t_k$. 

Next, we recall the following exponential moment estimate taken from \cite{BD07}.
\begin{lemma}\cite[Lemma 3.1]{BD07}\label{expmoment} Assume that $\frac{a}{\sigma}>2$. For any  $\mu\leq(\frac{a}{\sigma}-1)^2\frac{\sigma}{4}$, there exists a constant $C>0$ such that for any $k \in \{0,...,n-1\}$, $x\in\R^{\ast}_+$ and $t\in[t_k,t_{k+1}]$,
	\begin{align*}
	\widetilde{\E}_{t_k,x}^{a,b}\Big[\exp\Big\{\mu\int_{t_k}^t\dfrac{du}{Y_u^{a,b}(t_k,x)}\Big\}\Big]\leq C\big(1+\dfrac{1}{x^{\frac{1}{2}(\frac{a}{\sigma}-1)}}\big).
	\end{align*}
	Moreover, the statement remains valid for $Y^{a,b}$.
\end{lemma}
The proof for $b\geq 0$ is given in \cite[Lemma A.2]{BD07}. The proof for $b<0$ is based on the Comparison Theorem.

As mentioned before, our techniques are based on the Malliavin calculus, which will require some $L^p$-norm estimation for positive and negative polynomial moments of the flow process as well as the Malliavin derivative of the flow process and its inverse. 
\begin{lemma}\label{estimates} Assume condition $a\geq \sigma$. For all $p\geq 1$, there exists $C>0$ such that
	\begin{align}
	&\widetilde{\E}_{t_k,x}^{a,b}\big[\vert Y_t^{a,b}(t_k,x)\vert^p\big]\leq C\left(1+x^p\right).\label{e1}
	\end{align}
	For all $p\in[0,\frac{a}{\sigma}-1)$, there exists $C>0$ such that
	\begin{align}
	&\widetilde{\E}_{t_k,x}^{a,b}\big[\dfrac{1}{\vert Y_t^{a,b}(t_k,x)\vert^p}\big]\leq\dfrac{C}{x^p}.\label{e2}
	\end{align}
	For all $p\geq -\frac{(\frac{a}{\sigma}-1)^2}{2(\frac{a}{\sigma}-\frac{1}{2})}$, there exists $C>0$ such that
	\begin{align}
	&\widetilde{\E}_{t_k,x}^{a,b}\big[\vert \partial_xY_t^{a,b}(t_k,x)\vert^p\big]\leq C\big(1+\dfrac{1}{x^{\frac{\frac{a}{\sigma}-1+p}{2}}}\big),\label{e4}
	\end{align}
	for any $k \in \{0,...,n-1\}$, $x\in\R^{\ast}_+$ and $t\in[t_k,t_{k+1}]$.
\end{lemma} 

Now, let us recall that under condition $a\geq \sigma$, for any $t\in [t_k,t_{k+1}]$, the random variable $Y_t^{a,b}(t_k,x)$ belongs to $\mathbb{D}^{1,2}$ (see \cite[Corollary 4.2]{AE08}). From \eqref{flowk} and the chain rule of the Malliavin calculus, its corresponding Malliavin derivative satisfies the following equation
\begin{align}
D_sY_t^{a,b}(t_k,x)&=\sqrt{2\sigma Y_s^{a,b}(t_k,x)}-b\int_{s}^tD_sY_u^{a,b}(t_k,x)du \notag\\
&\qquad+\int_{s}^t\frac{\sqrt{\sigma}}{\sqrt{2Y_u^{a,b}(t_k,x)}}D_sY_u^{a,b}(t_k,x)dW_u,
\end{align}
for $s\leq t$, and $D_sY_t^{a,b}(t_k,x)=0$ for $s>t$. Furthermore, using the same arguments as \cite[(2.59)]{N}, the Malliavin derivative $D_sY_t^{a,b}(t_k,x)$ can be rewritten as
\begin{equation}\label{expression1}
D_sY_t^{a,b}(t_k,x)=\sqrt{2\sigma Y_s^{a,b}(t_k,x)}\partial_xY_t^{a,b}(t_k,x)(\partial_xY_s^{a,b}(t_k,x))^{-1}{\bf 1}_{[t_k,t]}(s).
\end{equation}
On the other hand, by condition $a\geq \sigma$ and \cite[Corollary 4.2]{AE08}, its explicit expression is given by
\begin{equation}\label{expression2}
D_sY_t^{a,b}(t_k,x)=\sqrt{2\sigma Y_t^{a,b}(t_k,x)}\exp\Big\{\int_s^t\Big(-\dfrac{b}{2}-\big(\dfrac{a}{2}-\dfrac{\sigma}{4}\big)\dfrac{1}{Y_u^{a,b}(t_k,x)}\Big)du\Big\}{\bf 1}_{[t_k,t]}(s).
\end{equation}
The Malliavin differentiability of the flow process and its inverse is guaranteed by the following lemma.
\begin{lemma}\label{Malliderivable} Let $(a,b)\in\Theta\times\Sigma$, $k \in \{0,...,n-1\}$, $t\in [t_k,t_{k+1}]$, and $x\in\R^{\ast}_+$.
	\begin{itemize}
		\item[\textnormal{(i)}] Assume condition $\frac{a}{\sigma}>4$. Then, $\partial_xY_t^{a,b}(t_k,x)\in \mathbb{D}^{1,2}$. Furthermore,
		\begin{align}\label{dmpx}
		D_s\big(\partial_xY_t^{a,b}(t_k,x)\big)&=\partial_xY_t^{a,b}(t_k,x)\Big(\sqrt{\frac{\sigma}{2}}\frac{1}{\sqrt{Y_s^{a,b}(t_k,x)}}+\frac{\sigma}{4}\int_{s}^t\frac{1}{(Y_u^{a,b}(t_k,x))^2}D_sY_u^{a,b}(t_k,x)du\notag\\
		&\qquad-\frac{1}{2}\sqrt{\frac{\sigma}{2}}\int_{s}^t\frac{1}{(Y_u^{a,b}(t_k,x))^{\frac{3}{2}}}D_sY_u^{a,b}(t_k,x)dW_u\Big){\bf 1}_{[t_k,t]}(s).
		\end{align} 
		\item[\textnormal{(ii)}] Assume condition $\frac{a}{\sigma}>\frac{9+\sqrt{57}}{2}$. Then, $(\partial_xY_t^{a,b}(t_k,x))^{-1}\in \mathbb{D}^{1,2}$ and $\frac{Y_{t}^{a,b}(t_k,x)}{\partial_xY_{t}^{a,b}(t_k,x)}\in \mathbb{D}^{1,2}$. Furthermore,
		\begin{align}
		&D_s\big(\dfrac{1}{\partial_xY_{t}^{a,b}(t_k,x)}\big)=\frac{-1}{(\partial_xY_t^{a,b}(t_k,x))^2}D_s(\partial_xY_t^{a,b}(t_k,x)){\bf 1}_{[t_k,t]}(s),\label{Mallinveflow1}\\
		&D_s\big(\dfrac{Y_{t}^{a,b}(t_k,x)}{\partial_xY_{t}^{a,b}(t_k,x)}\big)=\dfrac{1}{\partial_xY_{t}^{a,b}(t_k,x)}D_sY_{t}^{a,b}(t_k,x)+Y_{t}^{a,b}(t_k,x)D_s\big(\dfrac{1}{\partial_xY_{t}^{a,b}(t_k,x)}\big).\label{Mallinveflow2}
		\end{align} 
	\end{itemize}
\end{lemma}
For any $t>s$, the law of $Y_t^{a,b}$ conditioned on $Y_s^{a,b}=x$ possesses the transition density $p^{a,b}(t-s,x,y)$. Following \cite[Proposition 4.1]{G01}, the score functions are expressed in terms of a conditional expectation of a Skorohod integral.
\begin{proposition} \label{c2prop1}
	Assume condition $\frac{a}{\sigma}>\frac{9+\sqrt{57}}{2}$. Then, for all $k \in \{0,...,n-1\}$, $(a,b)\in\Theta\times \Sigma$, $\beta\in\{a,b\}$ and $x, y\in\R^{\ast}_+$,
	\begin{align*}
	\dfrac{\partial_{\beta}p^{a,b}}{p^{a,b}}(\Delta_n,x,y)=\dfrac{1}{\Delta_n}\widetilde{\E}_{t_k,x}^{a,b}\Big[\delta\big(\partial_{\beta}Y_{t_{k+1}}^{a,b}(t_k,x)U^{a,b}(t_k,x)\big)\vert Y_{t_{k+1}}^{a,b}=y\Big],
	\end{align*}
	where $U^{a,b}(t_k,x):=(U^{a,b}_t(t_k,x), t\in[t_k,t_{k+1}])$ with $U^{a,b}_t(t_k,x):=(D_tY_{t_{k+1}}^{a,b}(t_k,x))^{-1}$. 	
	
	Furthermore, under condition $\frac{a}{\sigma}>5+3\sqrt{2}$, the Skorohod integrals are decomposed as follows
	\begin{align}
	\delta\big(\partial_{a}Y_{t_{k+1}}^{a,b}(t_k,x)U^{a,b}(t_k,x)\big)&=\frac{\Delta_n}{\sqrt{2\sigma x}} (W_{t_{k+1}}-W_{t_{k}})+R_1^{a,b}+R_2^{a,b}+R_3^{a,b},\label{deria}\\
	\delta\big(\partial_{b}Y_{t_{k+1}}^{a,b}(t_k,x)U^{a,b}(t_k,x)\big)&=-\frac{\Delta_n}{\sqrt{2\sigma }}\sqrt{x} (W_{t_{k+1}}-W_{t_{k}})-xR_1^{a,b}+H_2^{a,b}+H_3^{a,b},\label{derib}
	\end{align}
	where 
	\begin{align*}
	&R_1^{a,b}=R_1^{a,b}(t_k,x)=\Delta_n\int_{t_k}^{t_{k+1}}\big(\frac{\partial_{x}Y_{s}^{a,b}(t_k,x)}{\sqrt{2\sigma Y_s^{a,b}(t_k,x)}}-\frac{\partial_{x}Y_{t_k}^{a,b}(t_k,x)}{\sqrt{2\sigma Y_{t_k}^{a,b}(t_k,x)}}\big)dW_s,\\
	&R_2^{a,b}=R_2^{a,b}(t_k,x)=\int_{t_k}^{t_{k+1}}\big(\frac{1}{\partial_{x}Y_{r}^{a,b}(t_k,x)}-\frac{1}{\partial_{x}Y_{t_k}^{a,b}(t_k,x)}\big)dr\int_{t_k}^{t_{k+1}}\frac{\partial_{x}Y_{s}^{a,b}(t_k,x)}{\sqrt{2\sigma Y_s^{a,b}(t_k,x)}}dW_s,\\
	&R_3^{a,b}=R_3^{a,b}(t_k,x)=-\int_{t_k}^{t_{k+1}}\int_{s}^{t_{k+1}}D_s\big(\dfrac{1}{\partial_xY_{r}^{a,b}(t_k,x)}\big)dr\frac{\partial_{x}Y_{s}^{a,b}(t_k,x)}{\sqrt{2\sigma Y_s^{a,b}(t_k,x)}}ds,\\
	&H_2^{a,b}=H_2^{a,b}(t_k,x)=-\int_{t_k}^{t_{k+1}}\big(\frac{Y_{r}^{a,b}(t_k,x)}{\partial_{x}Y_{r}^{a,b}(t_k,x)}-\frac{Y_{t_k}^{a,b}(t_k,x)}{\partial_{x}Y_{t_k}^{a,b}(t_k,x)}\big)dr\int_{t_k}^{t_{k+1}}\frac{\partial_{x}Y_{s}^{a,b}(t_k,x)}{\sqrt{2\sigma Y_s^{a,b}(t_k,x)}}dW_s,\\
	&H_3^{a,b}=H_3^{a,b}(t_k,x)=\int_{t_k}^{t_{k+1}}\int_{s}^{t_{k+1}}D_s\big(\dfrac{Y_{r}^{a,b}(t_k,x)}{\partial_xY_{r}^{a,b}(t_k,x)}\big)dr\frac{\partial_{x}Y_{s}^{a,b}(t_k,x)}{\sqrt{2\sigma Y_s^{a,b}(t_k,x)}}ds.
	\end{align*}		
\end{proposition}
The following crucial estimates for the terms appearing in Proposition \ref{c2prop1} will be needed.
\begin{lemma} \label{estimate}
	Let $(a,b)\in\Theta\times\Sigma$, $k \in \{0,...,n-1\}$, and $x\in\R^{\ast}_+$. 
	\begin{itemize}
	\item[\textnormal{(i)}] Under condition $\frac{a}{\sigma}>5+3\sqrt{2}$, we have
	\begin{align}
	&\widetilde{\E}_{t_k,x}^{a,b}\big[R_1^{a,b}+R_2^{a,b}+R_3^{a,b}\big]=0,\label{es1}\\
	&\widetilde{\E}_{t_k,x}^{a,b}\big[-xR_1^{a,b}+H_2^{a,b}+H_3^{a,b}\big]=0.\label{es3}
	\end{align}	
	\item[\textnormal{(ii)}] Let $q\in [1,\frac{13+\sqrt{89}}{20}]$ and assume condition {\bf(A)}. Then there exists a constant $C>0$ such that 
	\begin{align}
	&\widetilde{\E}_{t_k,x}^{a,b}\big[\big\vert R_1^{a,b}+R_2^{a,b}+R_3^{a,b}\big\vert^{2q}\big]\leq C\Delta_n^{4q}\big(\frac{1}{x^q}+\frac{1}{x^{\frac{a}{\sigma}-1}}\big),\label{es2}\\
	&\widetilde{\E}_{t_k,x}^{a,b}\big[\big\vert -xR_1^{a,b}+H_2^{a,b}+H_3^{a,b}\big\vert^{2q}\big]\leq C\Delta_n^{4q}(1+x^{2q})\big(\frac{1}{x^q}+\frac{1}{x^{\frac{a}{\sigma}-1}}\big).\label{es4}
	\end{align}	
	\end{itemize}
\end{lemma}

\section{Proof of Lemma \ref{negligibleterms}}
\label{prooflemma21}

\begin{proof}
To avoid confusion with the observed process $X^{a,b}$ generated by the Brownian motion $B$, on the probability space $(\widetilde{\Omega}, \widetilde{\mathcal{F}}, \widetilde{\P})$ we introduce an independent copy $Y^{a,b}=(Y_t^{a,b})_{t \geq 0}$ of $X^{a,b}$, which is associated to the Brownian motion $W$ independent of $B$ and which is defined by \eqref{c2eq1rajoute}. Then $(\Omega,\mathcal{F},\{\mathcal{F}_t\}_{t \geq 0},\P)$ denotes the product filtered probability space of $(\widehat{\Omega}, \widehat{\mathcal{F}},\{\widehat{\mathcal{F}}_t\}_{t \geq 0}, \widehat{\P})$ and $(\widetilde{\Omega}, \widetilde{\mathcal{F}},\{\widetilde{\mathcal{F}}_t\}_{t \geq 0}, \widetilde{\P})$, i.e., $\Omega=\widehat{\Omega}\times\widetilde{\Omega}$, $\mathcal{F}=\widehat{\mathcal{F}}\otimes\widetilde{\mathcal{F}}$, $\P=\widehat{\P}\otimes\widetilde{\P}$, $\mathcal{F}_t=\widehat{\mathcal{F}}_t\otimes\widetilde{\mathcal{F}}_t$, and $\E=\widehat{\E}\otimes\widetilde{\E}$, where $\E$, $\widehat{\E}$, $\widetilde{\E}$ denote the expectation w.r.t. $\P$, $\widehat{\P}$ and $\widetilde{\P}$.

Using the decomposition \eqref{generalexpansion} and Proposition \ref{c2prop1}, under condition $\frac{a}{\sigma}>5+3\sqrt{2}$, we obtain 
\begin{align*}
&\sum_{k=0}^{n-1}\left(R_{k,n}+H_{k,n}\right)=\sum_{k=0}^{n-1}u\varphi_{1,n}(a_0,b_0)\int_0^1\dfrac{\partial_{a}p^{a(\ell),b_0}}{p^{a(\ell),b_0}}(\Delta_n,X_{t_k}^{a_0,b_0},X_{t_{k+1}}^{a_0,b_0})d\ell\\
&\qquad+\sum_{k=0}^{n-1}v\varphi_{2,n}(a_0,b_0)\int_0^1\dfrac{\partial_{b}p^{a_n,b(\ell)}}{p^{a_n,b(\ell)}}(\Delta_n,X_{t_k}^{a_0,b_0},X_{t_{k+1}}^{a_0,b_0})d\ell-\sum_{k=0}^{n-1}\left(\xi_{k,n}+\eta_{k,n}\right)\\
&=\sum_{k=0}^{n-1}\dfrac{u\varphi_{1,n}(a_0,b_0)}{\Delta_n}\int_0^1\widetilde{\E}_{t_k,X_{t_k}^{a_0,b_0}}^{a(\ell),b_0}\big[\frac{\Delta_n}{\sqrt{2\sigma X_{t_{k}}^{a_0,b_0}}} (W_{t_{k+1}}-W_{t_{k}})+R^{a(\ell),b_0}\big\vert Y_{t_{k+1}}^{a(\ell),b_0}=X_{t_{k+1}}^{a_0,b_0}\big]d\ell\\
&\qquad+\sum_{k=0}^{n-1}\dfrac{v\varphi_{2,n}(a_0,b_0)}{\Delta_n}\int_0^1\widetilde{\E}_{t_k,X_{t_k}^{a_0,b_0}}^{a_n,b(\ell)}\big[\frac{-\Delta_n}{\sqrt{2\sigma }}\sqrt{X_{t_{k}}^{a_0,b_0}} (W_{t_{k+1}}-W_{t_{k}})\\
&\qquad+H^{a_n,b(\ell)}\vert Y_{t_{k+1}}^{a_n,b(\ell)}=X_{t_{k+1}}^{a_0,b_0}\big]d\ell-\sum_{k=0}^{n-1}\left(\xi_{k,n}+\eta_{k,n}\right),
\end{align*}
where 
\begin{align}\label{RH}
R^{a(\ell),b_0}=R_1^{a(\ell),b_0}+R_2^{a(\ell),b_0}+R_3^{a(\ell),b_0},\; H^{a_n,b(\ell)}=-X_{t_{k}}^{a_0,b_0}R_1^{a_n,b(\ell)}+H_2^{a_n,b(\ell)}+H_3^{a_n,b(\ell)}.
\end{align}
Then, from equation \eqref{c2eq1rajoute}, we write
\begin{align*}
W_{t_{k+1}}-W_{t_{k}}&=\frac{1}{\sqrt{2\sigma Y_{t_k}^{a,b}}}\big(Y_{t_{k+1}}^{a,b}-Y_{t_{k}}^{a,b}-(a-bY_{t_k}^{a,b})\Delta_n+b\int_{t_k}^{t_{k+1}}(Y_s^{a,b}-Y_{t_k}^{a,b})ds\\
&\qquad-\int_{t_k}^{t_{k+1}}(\sqrt{2\sigma Y_s^{a,b}}-\sqrt{2\sigma Y_{t_k}^{a,b}})dW_s\big).
\end{align*}
This implies that
\begin{align*}
&\sum_{k=0}^{n-1}\left(R_{k,n}+H_{k,n}\right)=\sum_{k=0}^{n-1}\dfrac{u\varphi_{1,n}(a_0,b_0)}{\Delta_n}\int_0^1\widetilde{\E}_{t_k,X_{t_k}^{a_0,b_0}}^{a(\ell),b_0}\Big[\frac{\Delta_n}{\sqrt{2\sigma X_{t_{k}}^{a_0,b_0}}} \frac{1}{\sqrt{2\sigma Y_{t_k}^{a(\ell),b_0}}}\\
&\qquad\times\big(Y_{t_{k+1}}^{a(\ell),b_0}-Y_{t_{k}}^{a(\ell),b_0}-(a(\ell)-b_0Y_{t_k}^{a(\ell),b_0})\Delta_n+b_0\int_{t_k}^{t_{k+1}}(Y_s^{a(\ell),b_0}-Y_{t_k}^{a(\ell),b_0})ds\\
&\qquad-\int_{t_k}^{t_{k+1}}(\sqrt{2\sigma Y_s^{a(\ell),b_0}}-\sqrt{2\sigma Y_{t_k}^{a(\ell),b_0}})dW_s\big)+R^{a(\ell),b_0}\big\vert Y_{t_{k+1}}^{a(\ell),b_0}=X_{t_{k+1}}^{a_0,b_0}\Big]d\ell\\
&\qquad+\sum_{k=0}^{n-1}\dfrac{v\varphi_{2,n}(a_0,b_0)}{\Delta_n}\int_0^1\widetilde{\E}_{t_k,X_{t_k}^{a_0,b_0}}^{a_n,b(\ell)}\Big[-\frac{\Delta_n}{\sqrt{2\sigma }}\sqrt{X_{t_{k}}^{a_0,b_0}} \frac{1}{\sqrt{2\sigma Y_{t_k}^{a_n,b(\ell)}}}\big(Y_{t_{k+1}}^{a_n,b(\ell)}-Y_{t_{k}}^{a_n,b(\ell)}\\
&\qquad-(a_n-b(\ell)Y_{t_k}^{a_n,b(\ell)})\Delta_n+b(\ell)\int_{t_k}^{t_{k+1}}(Y_s^{a_n,b(\ell)}-Y_{t_k}^{a_n,b(\ell)})ds-\int_{t_k}^{t_{k+1}}(\sqrt{2\sigma Y_s^{a_n,b(\ell)}}\\
&\qquad-\sqrt{2\sigma Y_{t_k}^{a_n,b(\ell)}})dW_s\big)+H^{a_n,b(\ell)}\vert Y_{t_{k+1}}^{a_n,b(\ell)}=X_{t_{k+1}}^{a_0,b_0}\Big]d\ell-\sum_{k=0}^{n-1}\left(\xi_{k,n}+\eta_{k,n}\right)\\
&=\sum_{k=0}^{n-1}\dfrac{u\varphi_{1,n}(a_0,b_0)}{\Delta_n}\int_0^1\Big\{\frac{\Delta_n}{2\sigma X_{t_{k}}^{a_0,b_0}}\big(X_{t_{k+1}}^{a_0,b_0}-X_{t_{k}}^{a_0,b_0}-(a(\ell)-b_0X_{t_{k}}^{a_0,b_0})\Delta_n\big)\\
&\qquad+\widetilde{\E}_{t_k,X_{t_k}^{a_0,b_0}}^{a(\ell),b_0}\Big[-R_4^{a(\ell),b_0}-R_5^{a(\ell),b_0}+R^{a(\ell),b_0}\big\vert Y_{t_{k+1}}^{a(\ell),b_0}=X_{t_{k+1}}^{a_0,b_0}\Big]\Big\}d\ell+\sum_{k=0}^{n-1}\dfrac{v\varphi_{2,n}(a_0,b_0)}{\Delta_n}\\
&\qquad\times\int_0^1\Big\{-\frac{\Delta_n}{2\sigma }\big(X_{t_{k+1}}^{a_0,b_0}-X_{t_{k}}^{a_0,b_0}-(a_n-b(\ell)X_{t_{k}}^{a_0,b_0})\Delta_n\big)+\widetilde{\E}_{t_k,X_{t_k}^{a_0,b_0}}^{a_n,b(\ell)}\big[X_{t_{k}}^{a_0,b_0}R_4^{a_n,b(\ell)}\\
&\qquad+X_{t_{k}}^{a_0,b_0}R_5^{a_n,b(\ell)}+H^{a_n,b(\ell)}\vert Y_{t_{k+1}}^{a_n,b(\ell)}=X_{t_{k+1}}^{a_0,b_0}\big]\Big\}d\ell-\sum_{k=0}^{n-1}(\xi_{k,n}+\eta_{k,n}),
\end{align*}
where 
\begin{align*}
&R_4^{a,b}=-\frac{\Delta_n}{2\sigma \sqrt{X_{t_{k}}^{a_0,b_0}Y_{t_k}^{a,b}}}b\int_{t_k}^{t_{k+1}}(Y_s^{a,b}-Y_{t_k}^{a,b})ds,\\
&R_5^{a,b}=\frac{\Delta_n}{\sqrt{2\sigma X_{t_{k}}^{a_0,b_0}Y_{t_k}^{a,b}}}\int_{t_k}^{t_{k+1}}(\sqrt{Y_s^{a,b}}-\sqrt{Y_{t_k}^{a,b}})dW_s.
\end{align*}
Next, using equation \eqref{c2eq1}, we write
\begin{align*}
&X_{t_{k+1}}^{a_0,b_0}-X_{t_{k}}^{a_0,b_0}=\sqrt{2\sigma X_{t_k}^{a_0,b_0}}(B_{t_{k+1}}-B_{t_{k}})+(a_0-b_0X_{t_{k}}^{a_0,b_0})\Delta_n\\
&\qquad-b_0\int_{t_k}^{t_{k+1}} (X_s^{a,b}-X_{t_k}^{a_0,b_0})ds +\int_{t_k}^{t_{k+1}} (\sqrt{2\sigma X_s^{a_0,b_0}}-\sqrt{2\sigma X_{t_k}^{a_0,b_0}})dB_s.
\end{align*}
Consequently, using the definition of $\xi_{k,n}$, $\eta_{k,n}$, we get
\begin{align*}
&\sum_{k=0}^{n-1}\left(R_{k,n}+H_{k,n}\right)=\sum_{k=0}^{n-1}\dfrac{u\varphi_{1,n}(a_0,b_0)}{\Delta_n}\int_0^1\Big\{ R^{a_0,b_0}_{6}+R^{a_0,b_0}_{7}+\widetilde{\E}_{t_k,X_{t_{k}}^{a_0,b_0}}^{a(\ell),b_0}\big[-R_4^{a(\ell),b_0}-R_5^{a(\ell),b_0}\\
&\qquad+R^{a(\ell),b_0}\big\vert Y_{t_{k+1}}^{a(\ell),b_0}=X_{t_{k+1}}^{a_0,b_0}\big]\Big\}d\ell+\sum_{k=0}^{n-1}\dfrac{v\varphi_{2,n}(a_0,b_0)}{\Delta_n}\int_0^1\Big\{-X_{t_{k}}^{a_0,b_0}R_6^{a_0,b_0}-X_{t_{k}}^{a_0,b_0}R_7^{a_0,b_0}\\
&\qquad+\widetilde{\E}_{t_k,X_{t_{k}}^{a_0,b_0}}^{a_n,b(\ell)}\left[X_{t_{k}}^{a_0,b_0}R_4^{a_n,b(\ell)}+X_{t_{k}}^{a_0,b_0}R_5^{a_n,b(\ell)}+H^{a_n,b(\ell)}\big\vert Y_{t_{k+1}}^{a_n,b(\ell)}=X_{t_{k+1}}^{a_0,b_0}\right]\Big\}d\ell,
\end{align*}
where 
\begin{align*}
&R_6^{a_0,b_0}=-\frac{\Delta_n}{2\sigma X_{t_k}^{a_0,b_0}}b_0\int_{t_k}^{t_{k+1}}(X_s^{a_0,b_0}-X_{t_k}^{a_0,b_0})ds,\\
&R_7^{a_0,b_0}=\frac{\Delta_n}{\sqrt{2\sigma} X_{t_k}^{a_0,b_0}}\int_{t_k}^{t_{k+1}}(\sqrt{X_s^{a_0,b_0}}-\sqrt{X_{t_k}^{a_0,b_0}})dB_s.
\end{align*}
Remark that in the supercritical case with $\varphi_{1,n}(a_0,b_0)=0$, $\sum_{k=0}^{n-1}(R_{k,n}+H_{k,n})$ contains only the second sum and $a_n=a_0$. This decomposition, together with Lemma \ref{lemma1}-\ref{lemma5} below, finishes the proof of Lemma \ref{negligibleterms}.
\end{proof}
Now, it remains to prove Lemma \ref{lemma1}-\ref{lemma5} below. To do so, we need the following preliminary part. Let $\widehat{\P}_{t_k,x}^{a,b}$ and $\widehat{\E}_{t_k,x}^{a,b}$ denote respectively the probability law of $X^{a,b}$ starting at $x$ at time $t_k$ and the expectation w.r.t. $\widehat{\P}_{t_k,x}^{a,b}$, i.e., $\widehat{\P}_{t_k,x}^{a,b}(A)=\widehat{\E}[{\bf 1}_{A}\vert X_{t_{k}}^{a,b}=x]$ for all $A\in \widehat{\mathcal{F}}$ and $\widehat{\E}_{t_k,x}^{a,b}[V]=\widehat{\E}[V\vert X_{t_{k}}^{a,b}=x]$ for all $\widehat{\mathcal{F}}$-measurable random variables $V$. We introduce change of measures on $I_k:=[t_k, t_{k+1}]$. For all $a, a_1\in\R^{\ast}_+$, $b, b_1\in\R$, $x\in\R^{\ast}_+$ and $k\in\{0,...,n-1\}$, by \cite[Lemma 3.1]{BP16}, for any $a>\sigma$ the probability measures $\widehat{\P}_{t_k,x}^{a_1,b_1}$ and $\widehat{\P}_{t_k,x}^{a,b}$ are absolutely continuous w.r.t. each other and its Radon-Nikodym derivative is given by
\begin{align}
&\dfrac{d\widehat{\P}_{t_k,x}^{a_1,b_1}}{d\widehat{\P}_{t_k,x}^{a,b}}\big((X_t^{a,b})_{t\in I_k}\big)\label{ratio2}\\
&=\exp\Big\{\int_{t_k}^{t_{k+1}}\frac{a_1-a-(b_1-b)X_s^{a,b}}{2\sigma X_s^{a,b}}dX_s^{a,b}-\frac{1}{2}\int_{t_k}^{t_{k+1}}\frac{(a_1-b_1X_s^{a,b})^2-(a-bX_s^{a,b})^2}{2\sigma X_s^{a,b}}ds\Big\}.\notag
\end{align}
Similarly,
\begin{align}
&\dfrac{d\widetilde{\P}_{t_k,x}^{a_1,b_1}}{d\widetilde{\P}_{t_k,x}^{a,b}}\big((Y_t^{a,b})_{t\in I_k}\big)\label{ratio3}\\
&=\exp\Big\{\int_{t_k}^{t_{k+1}}\frac{a_1-a-(b_1-b)Y_s^{a,b}}{2\sigma Y_s^{a,b}}dY_s^{a,b}-\frac{1}{2}\int_{t_k}^{t_{k+1}}\frac{(a_1-b_1Y_s^{a,b})^2-(a-bY_s^{a,b})^2}{2\sigma Y_s^{a,b}}ds\Big\}.\notag
\end{align}
To prove Lemma \ref{negligibleterms}, we need the following useful lemma related to the change of measures.
\begin{lemma}\label{change} Let $V$ denote a $\widetilde{\mathcal{F}}_{t_{k+1}}$-measurable random variable which can be $R^{a,b}$, $(R^{a,b})^2$, $H^{a,b}$, $(H^{a,b})^2$, $R_4^{a,b}$, $(R_4^{a,b})^2$, $R_5^{a,b}$, $(R_5^{a,b})^2$. Assume condition $a>\sigma$. Then, for any $k\in\{0,...,n-1\}$ and $x\in\R^{\ast}_+$,
	\begin{equation}\label{for1}\begin{split}
	&\widehat{\E}_{t_k,x}^{a_0,b_0}\Big[\widetilde{\E}_{t_k,x}^{a,b}\big[V\vert Y_{t_{k+1}}^{a,b}=
	X_{t_{k+1}}^{a_0,b_0}\big]\Big]=\widetilde{\E}_{t_k,x}^{a,b}\left[V\right]+\widetilde{\E}_{t_k,x}^{a,b}\Big[\widetilde{\E}_{t_k,x}^{a,b}\big[V\vert Y_{t_{k+1}}^{a,b}\big]\big(\frac{d\widetilde{\P}_{t_k,x}^{a_0,b_0}}{d\widetilde{\P}_{t_k,x}^{a,b}}((Y_t^{a,b})_{t\in I_k})-1\big)\Big].
	\end{split}
	\end{equation}	
	Similarly, let $\widehat{V}$ be a $\widehat{\mathcal{F}}_{t_{k+1}}$-measurable random variable. Then, we have
	\begin{equation}\label{for2}\begin{split}
	\widehat{\E}_{t_k,x}^{a_0,b_0}[\widehat{V}]=\widehat{\E}_{t_k,x}^{a,b}[\widehat{V}]+\widehat{\E}_{t_k,x}^{a,b}\Big[\widehat{V}\big(\frac{d\widehat{\P}_{t_k,x}^{a_0,b_0}}{d\widehat{\P}_{t_k,x}^{a,b}}((X_t^{a,b})_{t\in I_k})-1\big)\Big].
	\end{split}
	\end{equation}	
\end{lemma}
\begin{proof}
	For simplicity, we set $g(x,y):=\widetilde{\E}_{t_k,x}^{a,b}[V\vert Y_{t_{k+1}}^{a,b}=y]$ for all $y\in\R^{\ast}_+$. Applying change of measures, we have that 
	\begin{align*}
	&\widehat{\E}_{t_k,x}^{a_0,b_0}\Big[\widetilde{\E}_{t_k,x}^{a,b}\big[V\vert Y_{t_{k+1}}^{a,b}=
	X_{t_{k+1}}^{a_0,b_0}\big]\Big]=\widehat{\E}_{t_k,x}^{a_0,b_0}[g(x,X_{t_{k+1}}^{a_0,b_0})]=\widehat{\E}_{t_k,x}^{a,b}\Big[g(x,X_{t_{k+1}}^{a,b})\dfrac{d\widehat{\P}_{t_k,x}^{a_0,b_0}}{d\widehat{\P}_{t_k,x}^{a,b}}((X_t^{a,b})_{t\in I_k})\Big]\\
	&=\widetilde{\E}_{t_k,x}^{a,b}\Big[g(x,Y_{t_{k+1}}^{a,b})\dfrac{d\widetilde{\P}_{t_k,x}^{a_0,b_0}}{d\widetilde{\P}_{t_k,x}^{a,b}}((Y_t^{a,b})_{t\in I_k})\Big]\\
	&=\widetilde{\E}_{t_k,x}^{a,b}[g(x,Y_{t_{k+1}}^{a,b})]+\widetilde{\E}_{t_k,x}^{a,b}\Big[g(x,Y_{t_{k+1}}^{a,b})\big(\dfrac{d\widetilde{\P}_{t_k,x}^{a_0,b_0}}{d\widetilde{\P}_{t_k,x}^{a,b}}((Y_t^{a,b})_{t\in I_k})-1\big)\Big]\\
	&=\widetilde{\E}_{t_k,x}^{a,b}\Big[\widetilde{\E}_{t_k,x}^{a,b}\big[V\vert Y_{t_{k+1}}^{a,b}\big]\Big]+\widetilde{\E}_{t_k,x}^{a,b}\Big[\widetilde{\E}_{t_k,x}^{a,b}\big[V\vert Y_{t_{k+1}}^{a,b}\big]\big(\dfrac{d\widetilde{\P}_{t_k,x}^{a_0,b_0}}{d\widetilde{\P}_{t_k,x}^{a,b}}((Y_t^{a,b})_{t\in I_k})-1\big)\Big]\\
	&=\widetilde{\E}_{t_k,x}^{a,b}[V]+\widetilde{\E}_{t_k,x}^{a,b}\Big[\widetilde{\E}_{t_k,x}^{a,b}\big[V\vert Y_{t_{k+1}}^{a,b}\big]\big(\dfrac{d\widetilde{\P}_{t_k,x}^{a_0,b_0}}{d\widetilde{\P}_{t_k,x}^{a,b}}((Y_t^{a,b})_{t\in I_k})-1\big)\Big],
	\end{align*}
	where we have used the fact that $Y^{a,b}$ is the independent copy of $X^{a,b}$. Thus, \eqref{for1} follows. The equality \eqref{for2} is proven using the same arguments.
\end{proof}	
Next, the following lemma gives the estimate for the second term in Lemma \ref{change}.
\begin{lemma}\label{deviation1} Let $p, q>1$ satisfy that $\frac{1}{p}+\frac{1}{q}=1$, $q_1$ be large enough, $(a,b)=(a(\ell),b_0)$ or $(a,b)=(a_n,b(\ell))$, and $V$, $\widehat{V}$ be the same notation of the previous lemma. Assume that $p<\frac{a}{\sigma}-1$. Then for any $k\in\{0,...,n-1\}$, $x\in\R^{\ast}_+$ and for $n$ large enough, there exist constants $C_1, C_2>0$ which does not depend on $x$ such that 
	\begin{equation}\label{for3}\begin{split}
	&\Big\vert\widetilde{\E}_{t_k,x}^{a,b}\Big[\widetilde{\E}_{t_k,x}^{a,b}\big[V\vert Y_{t_{k+1}}^{a,b}\big]\big(\dfrac{d\widetilde{\P}_{t_k,x}^{a_0,b_0}}{d\widetilde{\P}_{t_k,x}^{a,b}}((Y_t^{a,b})_{t\in I_k})-1\big)\Big]\Big\vert\leq C_1\sqrt{\Delta_n}\big(\widetilde{\E}_{t_k,x}^{a,b}[\vert V\vert^q]\big)^{\frac{1}{q}}e^{C_2(b_0-b)^2\Delta_nx} \\
	&\qquad\times\Big(\vert a-a_0\vert\big(\frac{1}{\sqrt{x}}+\frac{\sqrt{\Delta_n}}{x}\vert a-a_0\vert+\sqrt{\Delta_n}\vert b-b_0\vert\big)   \big(1+\dfrac{1}{x^{\frac{1}{8pq_1}(\frac{a}{\sigma}-1)}}\big)\\
	&\qquad+\vert b-b_0\vert\big(1+\sqrt{x}+\sqrt{\Delta_n}\vert b-b_0\vert x\big) \Big).
	\end{split}
	\end{equation}	
	Moreover, assume that $p<2(\frac{a}{\sigma}-1)$. Then, we have
	\begin{equation}\label{for4}\begin{split}
	&\Big\vert\widehat{\E}_{t_k,x}^{a,b}\Big[\widehat{V}\big(\frac{d\widehat{\P}_{t_k,x}^{a_0,b_0}}{d\widehat{\P}_{t_k,x}^{a,b}}((X_t^{a,b})_{t\in I_k})-1\big)\Big]\Big\vert\leq C_1\dfrac{\sqrt{\Delta_n}}{\sqrt{x}}\big\vert\int_{a}^{a_0}\big(\widehat{\E}_{t_k,x}^{\alpha,b_0}[\vert \widehat{V}\vert^q]\big)^{\frac{1}{q}}d\alpha\big\vert\\
	&\qquad+C_2\sqrt{\Delta_n}(1+\sqrt{x})\big\vert\int_{b}^{b_0}\big(\widehat{\E}_{t_k,x}^{a,\beta}[\vert \widehat{V}\vert^q]\big)^{\frac{1}{q}}d\beta\big\vert.
	\end{split}
	\end{equation}
	\end{lemma}
\begin{proof} 	
	Using \eqref{ratio3}, we have the following decomposition
	\begin{align}\label{decom}
	&\dfrac{d\widetilde{\P}_{t_k,x}^{a_0,b_0}}{d\widetilde{\P}_{t_k,x}^{a,b}}((Y_t^{a,b})_{t\in I_k})-1=\int_{a}^{a_0}\dfrac{\partial}{\partial \alpha}\big(\dfrac{d\widetilde{\P}_{t_k,x}^{\alpha,b_0}}{d\widetilde{\P}_{t_k,x}^{a,b}}\big)((Y_t^{a,b})_{t\in I_k})d\alpha+\int_{b}^{b_0}\dfrac{\partial}{\partial \beta}\big(\dfrac{d\widetilde{\P}_{t_k,x}^{a,\beta}}{d\widetilde{\P}_{t_k,x}^{a,b}}\big)((Y_t^{a,b})_{t\in I_k})d\beta\notag\\
	&=\int_{a}^{a_0}\int_{t_k}^{t_{k+1}}\dfrac{1}{2\sigma Y_s^{a,b}}\big(dY_s^{a,b}-(\alpha-b_0Y_s^{a,b})ds\big)\frac{d\widetilde{\P}_{t_k,x}^{\alpha,b_0}}{d\widetilde{\P}_{t_k,x}^{a,b}}((Y_t^{a,b})_{t\in I_k})d\alpha\notag\\
	&\qquad-\int_{b}^{b_0}\int_{t_k}^{t_{k+1}}\dfrac{1}{2\sigma}\big(dY_s^{a,b}-(a-\beta Y_s^{a,b})ds\big)\dfrac{d\widetilde{\P}_{t_k,x}^{a,\beta}}{d\widetilde{\P}_{t_k,x}^{a,b}}((Y_t^{a,b})_{t\in I_k})d\beta.
	\end{align}	
	Applying equation \eqref{c2eq1rajoute}, H\"older's inequality with $\frac{1}{p}+\frac{1}{q}=1$ and $\frac{1}{p_1}+\frac{1}{q_1}=1$, BDG's and Jensen's inequalities, \eqref{e1} and \eqref{e2} for $Y^{a,b}$, we get that
	\begin{align}
	&\Big\vert\widetilde{\E}_{t_k,x}^{a,b}\Big[\widetilde{\E}_{t_k,x}^{a,b}\big[V\vert Y_{t_{k+1}}^{a,b}\big]\big(\dfrac{d\widetilde{\P}_{t_k,x}^{a_0,b_0}}{d\widetilde{\P}_{t_k,x}^{a,b}}((Y_t^{a,b})_{t\in I_k})-1\big)\Big]\Big\vert\notag\\	
	&=\Big\vert\int_{a}^{a_0}\widetilde{\E}_{t_k,x}^{a,b}\Big[\widetilde{\E}_{t_k,x}^{a,b}\big[V\vert Y_{t_{k+1}}^{a,b}\big]\int_{t_k}^{t_{k+1}}\dfrac{1}{2\sigma Y_s^{a,b}}\big(\sqrt{2\sigma Y_s^{a,b}}dW_s+(a-\alpha-(b-b_0)Y_s^{a,b})ds\big)\notag\\
	&\qquad\times\frac{d\widetilde{\P}_{t_k,x}^{\alpha,b_0}}{d\widetilde{\P}_{t_k,x}^{a,b}}((Y_t^{a,b})_{t\in I_k})\Big]d\alpha-\int_{b}^{b_0}\widetilde{\E}_{t_k,x}^{a,b}\Big[\widetilde{\E}_{t_k,x}^{a,b}\big[V\vert Y_{t_{k+1}}^{a,b}\big]\int_{t_k}^{t_{k+1}}\dfrac{1}{2\sigma}\big(\sqrt{2\sigma Y_s^{a,b}}dW_s\notag\\
	&\qquad-(b-\beta)Y_s^{a,b})ds\big)\dfrac{d\widetilde{\P}_{t_k,x}^{a,\beta}}{d\widetilde{\P}_{t_k,x}^{a,b}}((Y_t^{a,b})_{t\in I_k})\Big]d\beta\Big\vert\leq \Big\vert\int_{a}^{a_0} \big(\widetilde{\E}_{t_k,x}^{a,b}[\widetilde{\E}_{t_k,x}^{a,b}[\vert V\vert^q\vert Y_{t_{k+1}}^{a,b}]]\big)^{\frac{1}{q}}\notag\\
	&\qquad\times  \Big(\widetilde{\E}_{t_k,x}^{a,b}\Big[\big\vert \int_{t_k}^{t_{k+1}}\dfrac{1}{2\sigma Y_s^{a,b}}\big(\sqrt{2\sigma Y_s^{a,b}}dW_s+(a-\alpha-(b-b_0)Y_s^{a,b})ds\big)\big\vert^{pp_1}\Big]\Big)^{\frac{1}{pp_1}} \notag\\
	&\qquad \times \Big(\widetilde{\E}_{t_k,x}^{a,b}\Big[\big( \frac{d\widetilde{\P}_{t_k,x}^{\alpha,b_0}}{d\widetilde{\P}_{t_k,x}^{a,b}}((Y_t^{a,b})_{t\in I_k})\big)^{pq_1}\Big]\Big)^{\frac{1}{pq_1}} d\alpha\Big\vert+\Big\vert\int_{b}^{b_0} \big(\widetilde{\E}_{t_k,x}^{a,b}[\widetilde{\E}_{t_k,x}^{a,b}[\vert V\vert^{q}\vert Y_{t_{k+1}}^{a,b}]]\big)^{\frac{1}{q}}\notag\\
	&\qquad\times  \Big(\widetilde{\E}_{t_k,x}^{a,b}\Big[\big\vert \int_{t_k}^{t_{k+1}}\dfrac{1}{2\sigma}\big(\sqrt{2\sigma Y_s^{a,b}}dW_s-(b-\beta)Y_s^{a,b})ds\big)\big\vert^{pp_1}\Big]\Big)^{\frac{1}{pp_1}}\notag \\
	&\qquad \times \Big(\widetilde{\E}_{t_k,x}^{a,b}\Big[\big( \frac{d\widetilde{\P}_{t_k,x}^{a,\beta}}{d\widetilde{\P}_{t_k,x}^{a,b}}((Y_t^{a,b})_{t\in I_k})\big)^{pq_1}\Big]\Big)^{\frac{1}{pq_1}} d\beta\Big\vert\leq C_1\Big\vert\int_{a}^{a_0} (\widetilde{\E}_{t_k,x}^{a,b}[\vert V\vert^q])^{\frac{1}{q}} \sqrt{\Delta_n}\big(\frac{1}{\sqrt{x}}\notag\\
	&\qquad+\frac{\sqrt{\Delta_n}}{x}\vert a-a_0\vert+\sqrt{\Delta_n}\vert b-b_0\vert \big)   \Big(\widetilde{\E}_{t_k,x}^{a,b}\Big[\big( \frac{d\widetilde{\P}_{t_k,x}^{\alpha,b_0}}{d\widetilde{\P}_{t_k,x}^{a,b}}((Y_t^{a,b})_{t\in I_k})\big)^{pq_1}\Big]\Big)^{\frac{1}{pq_1}} d\alpha\Big\vert\notag\\
	&\qquad+C_2\Big\vert\int_{b}^{b_0} (\widetilde{\E}_{t_k,x}^{a,b}[\vert V\vert^{q}])^{\frac{1}{q}} \sqrt{\Delta_n}\big(1+\sqrt{x}+\sqrt{\Delta_n}\vert b-b_0\vert x\big)\notag\\
	&\qquad\times  \Big(\widetilde{\E}_{t_k,x}^{a,b}\Big[\big( \frac{d\widetilde{\P}_{t_k,x}^{a,\beta}}{d\widetilde{\P}_{t_k,x}^{a,b}}((Y_t^{a,b})_{t\in I_k})\big)^{pq_1}\Big]\Big)^{\frac{1}{pq_1}} d\beta\Big\vert,\label{a1}
	\end{align}
	for some constants $C_1, C_2>0$, where $p_1$ is close to $1$ and $p<\frac{a}{\sigma}-1$. 
	
	Then, using Cauchy-Schwarz inequality twice and Lemma \ref{expmoment}, provided that we have an exponential martingale, for $n$ large enough we get
	\begin{align*}
	&\widetilde{\E}_{t_k,x}^{a,b}\Big[\big( \frac{d\widetilde{\P}_{t_k,x}^{\alpha,b_0}}{d\widetilde{\P}_{t_k,x}^{a,b}}((Y_t^{a,b})_{t\in I_k})\big)^{pq_1}\Big]\\
	&\leq \big(\widetilde{\E}_{t_k,x}^{a,b}\big[\exp\big\{pq_1(2pq_1-1)\frac{1}{\sigma}\big((\alpha-a)^2\int_{t_k}^{t_{k+1}}\frac{ds}{Y_s^{a,b}}+(b_0-b)^2\int_{t_k}^{t_{k+1}}Y_s^{a,b}ds\big)\big\}\big]\big)^{\frac{1}{2}}\\
	&\leq C\big(1+\dfrac{1}{x^{\frac{1}{8}(\frac{a}{\sigma}-1)}}\big)\big(\widetilde{\E}_{t_k,x}^{a,b}\big[\exp\big\{2pq_1(2pq_1-1)\frac{1}{\sigma}(b_0-b)^2\int_{t_k}^{t_{k+1}}Y_s^{a,b}ds\big\}\big]\big)^{\frac{1}{4}},
	\end{align*}
	where we have used the fact that  	$2pq_1(2pq_1-1)\frac{1}{\sigma}(\alpha-a)^2\leq 2pq_1(2pq_1-1)\frac{1}{\sigma}(a_0-a)^2\leq 2pq_1(2pq_1-1)\frac{1}{\sigma}u^2\varphi_{1,n}^2(a_0,b_0)\leq(\frac{a}{\sigma}-1)^2\frac{\sigma}{4}$ for $n$ large enough since $\varphi_{1,n}(a_0,b_0)\to0$.
				
	Next, using Jensen's inequality and $(1.9)$ of \cite[Proposition 1.2.4]{A15}, for $n$ large enough we get	
	\begin{align}
	&\widetilde{\E}_{t_k,x}^{a,b}\Big[\big( \frac{d\widetilde{\P}_{t_k,x}^{\alpha,b_0}}{d\widetilde{\P}_{t_k,x}^{a,b}}((Y_t^{a,b})_{t\in I_k})\big)^{pq_1}\Big]\leq C\big(1+\dfrac{1}{x^{\frac{1}{8}(\frac{a}{\sigma}-1)}}\big)\notag\\
	&\times\big(\frac{1}{\Delta_n}\int_{t_k}^{t_{k+1}}\widetilde{\E}_{t_k,x}^{a,b}\big[\exp\big\{2pq_1(2pq_1-1)\frac{1}{\sigma}(b_0-b)^2\Delta_nY_s^{a,b}\big\}\big]ds\big)^{\frac{1}{4}}\notag\\
	&\leq C\big(1+\dfrac{1}{x^{\frac{1}{8}(\frac{a}{\sigma}-1)}}\big)\big(\sup_{s\in [t_k,t_{k+1}]}\widetilde{\E}_{t_k,x}^{a,b}\big[\exp\big\{2pq_1(2pq_1-1)\frac{1}{\sigma}(b_0-b)^2\Delta_nY_s^{a,b}\big\}\big]\big)^{\frac{1}{4}}\notag\\
	&=C\big(1+\dfrac{1}{x^{\frac{1}{8}(\frac{a}{\sigma}-1)}}\big)\big(\sup_{s\in [0,\Delta_n]}\widetilde{\E}_{0,x}^{a,b}\big[\exp\big\{2pq_1(2pq_1-1)\frac{1}{\sigma}(b_0-b)^2\Delta_nY_s^{a,b}\big\}\big]\big)^{\frac{1}{4}}\notag \\
	&\leq C\big(1+\dfrac{1}{x^{\frac{1}{8}(\frac{a}{\sigma}-1)}}\big)e^{c(b_0-b)^2\Delta_nx},\label{a4}
	\end{align}
		for some constants $C, c>0$. The same arguments can be used to check Novikov's condition and deduce the validity of the exponential martingale property used above.		
			
		Proceeding as for the term $\frac{d\widetilde{\P}_{t_k,x}^{\alpha,b_0}}{d\widetilde{\P}_{t_k,x}^{a,b}}((Y_t^{a,b})_{t\in I_k})$ above, for $n$ large enough we obtain
	    \begin{align}
	    \widetilde{\E}_{t_k,x}^{a,b}\Big[\big( \frac{d\widetilde{\P}_{t_k,x}^{a,\beta}}{d\widetilde{\P}_{t_k,x}^{a,b}}((Y_t^{a,b})_{t\in I_k})\big)^{pq_1}\Big]\leq Ce^{c(b_0-b)^2\Delta_nx},\label{a6}
	    \end{align}
	    for some constants $C, c>0$. Thus, \eqref{for3} follows from \eqref{a1}, \eqref{a4} and \eqref{a6}.		
		
		Next, using \eqref{decom} for $X^{a,b}$, change of measures, equation \eqref{c2eq1}, H\"older's, Jensen's and BDG's inequalities, \eqref{e1} and \eqref{e2} for $X^{a,b}$, we get that
		\begin{align*}
		&\Big\vert\widehat{\E}_{t_k,x}^{a,b}\Big[\widehat{V}\big(\frac{d\widehat{\P}_{t_k,x}^{a_0,b_0}}{d\widehat{\P}_{t_k,x}^{a,b}}((X_t^{a,b})_{t\in I_k})-1\big)\Big]\Big\vert=\Big\vert\int_{a}^{a_0}\widehat{\E}_{t_k,x}^{\alpha,b_0}\big[\widehat{V}\int_{t_k}^{t_{k+1}}\dfrac{dB_s}{\sqrt{2\sigma X_s^{\alpha,b_0}}}\big]d\alpha\\	
		&\qquad-\int_{b}^{b_0}\widehat{\E}_{t_k,x}^{a,\beta}\big[\widehat{V}\int_{t_k}^{t_{k+1}}\dfrac{\sqrt{X_s^{a,\beta}}}{\sqrt{2\sigma}}dB_s\big]d\beta\Big\vert\\
		&\leq C_1\big\vert\int_{a}^{a_0}\big(\widehat{\E}_{t_k,x}^{\alpha,b_0}[\vert \widehat{V}\vert^q]\big)^{\frac{1}{q}}\big(\Delta_n^{\frac{p}{2}-1}\int_{t_k}^{t_{k+1}}\widehat{\E}_{t_k,x}^{\alpha,b_0}[\dfrac{1}{\vert X_s^{\alpha,b_0}\vert^{\frac{p}{2}}}]ds\big)^{\frac{1}{p}}d\alpha\big\vert\\
		&\qquad+C_2\big\vert\int_{b}^{b_0}\big(\widehat{\E}_{t_k,x}^{a,\beta}[ \vert \widehat{V}\vert^q]\big)^{\frac{1}{q}}\big(\Delta_n^{\frac{p}{2}-1}\int_{t_k}^{t_{k+1}}\widehat{\E}_{t_k,x}^{a,\beta}[\vert X_s^{a,\beta}\vert^{\frac{p}{2}}]ds\big)^{\frac{1}{p}}d\beta\big\vert\\
		&\leq C_1\dfrac{\sqrt{\Delta_n}}{\sqrt{x}}\big\vert\int_{a}^{a_0}\big(\widehat{\E}_{t_k,x}^{\alpha,b_0}[\vert \widehat{V}\vert^q]\big)^{\frac{1}{q}}d\alpha\big\vert+C_2\sqrt{\Delta_n}\left(1+\sqrt{x}\right)\big\vert\int_{b}^{b_0}\big(\widehat{\E}_{t_k,x}^{a,\beta}[\vert \widehat{V}\vert^q]\big)^{\frac{1}{q}}d\beta\big\vert,
		\end{align*}
		for some constants $C_1, C_2>0$, where $p,q>1$ and $\frac{1}{p}+\frac{1}{q}=1$ with $p<2(\frac{a}{\sigma}-1)$. Thus, \eqref{for4} follows.	
	\end{proof}

\begin{lemma}\label{lemma1} Assume condition {\bf(A)}. Then, for both subcritical and critical cases as $n\to\infty$, we have
	\begin{align*}
	\sum_{k=0}^{n-1}\dfrac{u\varphi_{1,n}(a_0,b_0)}{\Delta_n}\int_0^1\widetilde{\E}_{t_k,X_{t_{k}}^{a_0,b_0}}^{a(\ell),b_0}[R^{a(\ell),b_0}\vert Y_{t_{k+1}}^{a(\ell),b_0}=X_{t_{k+1}}^{a_0,b_0}]d\ell\overset{\widehat{\P}^{a_0,b_0}}{\longrightarrow}0,
	\end{align*}
	where $R^{a(\ell),b_0}$ is given by \eqref{RH}.
\end{lemma}
\begin{proof} 
	It suffices to show that conditions (i) and (ii) of Lemma \ref{zero} satisfy under the measure $\widehat{\P}^{a_0,b_0}$ applied to the random variable
	$$
	\zeta_{k,n}:=\dfrac{u\varphi_{1,n}(a_0,b_0)}{\Delta_n}\int_0^1\widetilde{\E}_{t_k,X_{t_{k}}^{a_0,b_0}}^{a(\ell),b_0}[R^{a(\ell),b_0}\vert Y_{t_{k+1}}^{a(\ell),b_0}=X_{t_{k+1}}^{a_0,b_0}]d\ell.
	$$ 
	Now, we start showing condition (i) of Lemma \ref{zero}. Using \eqref{for1} of Lemma \ref{change} with $V=R^{a(\ell),b_0}$, the fact that  $\widetilde{\E}_{t_k,X_{t_k}^{a_0,b_0}}^{a(\ell),b_0}[R^{a(\ell),b_0}]=0$ by \eqref{es1}, \eqref{for3} of Lemma \ref{deviation1} with $q=p=2$ and $q_1$ large enough, \eqref{es2} of Lemma \ref{estimate} with $q=1$, and condition {\bf(A)}, we obtain for $n$ large enough,	
	\begin{align*} 
	&\vert\sum_{k=0}^{n-1}\widehat{\E}^{a_0,b_0}[\zeta_{k,n}\vert \widehat{\mathcal{F}}_{t_k}]\vert\leq C\sum_{k=0}^{n-1}\dfrac{\vert u\vert\varphi_{1,n}(a_0,b_0)}{\Delta_n}\sqrt{\Delta_n}\int_0^1\big(\widetilde{\E}_{t_k,X_{t_k}^{a_0,b_0}}^{a(\ell),b_0}[\vert R^{a(\ell),b_0}\vert^2]\big)^{\frac{1}{2}}\\
	&\qquad\times \vert u\vert\varphi_{1,n}(a_0,b_0)\big(\frac{1}{\sqrt{X_{t_k}^{a_0,b_0}}}+\frac{\sqrt{\Delta_n}}{X_{t_k}^{a_0,b_0}}\vert u\vert\varphi_{1,n}(a_0,b_0)\big)   \big(1+\dfrac{1}{(X_{t_k}^{a_0,b_0})^{\frac{1}{16q_1}(\frac{a(\ell)}{\sigma}-1)}}\big)d\ell\\
	&\leq  Cu^2\Delta_n^{\frac{3}{2}}(\varphi_{1,n}(a_0,b_0))^2\sum_{k=0}^{n-1}\int_0^1\big(\frac{1}{\sqrt{X_{t_k}^{a_0,b_0}}} +\frac{1}{(X_{t_k}^{a_0,b_0})^{\frac{1}{2}(\frac{a(\ell)}{\sigma}-1)}}\big) \big(\frac{1}{\sqrt{X_{t_k}^{a_0,b_0}}}\\
	&\qquad+\dfrac{1}{(X_{t_k}^{a_0,b_0})^{\frac{1}{2}+\frac{1}{16q_1}(\frac{a(\ell)}{\sigma}-1)}}+\vert u\vert\dfrac{\varphi_{1,n}(a_0,b_0)\sqrt{\Delta_n}}{(X_{t_k}^{a_0,b_0})^{1+\frac{1}{16q_1}(\frac{a(\ell)}{\sigma}-1)}}\big)d\ell,
	\end{align*}
	for some constant $C>0$. Now, we use the fact 
	$a_0-\vert u\vert\varphi_{1,n}(a_0,b_0)\leq a(\ell)=a_0+\ell u\varphi_{1,n}(a_0,b_0)\leq a_0+\vert u\vert\varphi_{1,n}(a_0,b_0)$ and $a_0-\vert u\vert\varphi_{1,n}(a_0,b_0)\to a_0$, $a_0+\vert u\vert\varphi_{1,n}(a_0,b_0)\to a_0$. Then for any $\epsilon>0$ small enough and $n$ large enough, we have $a_0-\epsilon<a_0-\vert u\vert\varphi_{1,n}(a_0,b_0)<a_0+\epsilon$ and $a_0-\epsilon<a_0+\vert u\vert\varphi_{1,n}(a_0,b_0)<a_0+\epsilon$. This implies that
    \begin{align} 	
    \dfrac{1}{(X_{t_k}^{a_0,b_0})^{a(\ell)}}&=\dfrac{{\bf 1}_{\{X_{t_k}^{a_0,b_0}>1\}}}{(X_{t_k}^{a_0,b_0})^{a(\ell)}}+\dfrac{{\bf 1}_{\{X_{t_k}^{a_0,b_0}\leq 1\}}}{(X_{t_k}^{a_0,b_0})^{a(\ell)}}\leq \dfrac{1}{(X_{t_k}^{a_0,b_0})^{a_0-\epsilon}}+\dfrac{1}{(X_{t_k}^{a_0,b_0})^{a_0+\epsilon}}.\label{major}
    \end{align}	
	For subcritical case $b_0>0$ with $\varphi_{1,n}(a_0,b_0)=\frac{1}{\sqrt{n\Delta_n}}$ (respectively critical case $b_0=0$ with $\varphi_{1,n}(a_0,0)=\frac{1}{\sqrt{\log(n\Delta_n)}}$), taking the expectation in both sides and using \eqref{major}, Lemma \ref{moment2} \textnormal{(i)} (respectively Lemma \ref{moment2} \textnormal{(ii)}) and standard calculations, for $n$ large enough we get $\widehat{\E}^{a_0,b_0}[\vert\sum_{k=0}^{n-1}\widehat{\E}^{a_0,b_0}[\zeta_{k,n}\vert \widehat{\mathcal{F}}_{t_k}]\vert]\leq C\sqrt{\Delta_n}$, which tends to zero, since all the obtained powers are smaller than $\frac{a_0}{\sigma}$ for $\epsilon$ small enough and $q_1$ and $n$ large enough. This completes the proof of condition (i) of Lemma \ref{zero}. 
	
	Similarly, we use Jensen's inequality, \eqref{for1} of Lemma \ref{change} with $V=(R^{a(\ell),b_0})^2$, \eqref{for3} of Lemma \ref{deviation1} with $q=q_0=\frac{13+\sqrt{89}}{20}>1$ and $q_1$ large enough, \eqref{es2} of Lemma \ref{estimate} with $q\in\{1,q_0\}$, and condition {\bf(A)}, for $n$ large enough $\sum_{k=0}^{n-1}\widehat{\E}^{a_0,b_0}[\zeta_{k,n}^2\vert \widehat{\mathcal{F}}_{t_k}]$ is bounded by
	\begin{align*}
	&\sum_{k=0}^{n-1}\frac{u^2(\varphi_{1,n}(a_0,b_0))^2}{\Delta_n^2}\int_0^1\widehat{\E}_{t_k,X_{t_k}^{a_0,b_0}}^{a_0,b_0}[\widetilde{\E}_{t_k,X_{t_k}^{a_0,b_0}}^{a(\ell),b_0}
	[(R^{a(\ell),b_0})^2\vert Y_{t_{k+1}}^{a(\ell),b_0}=X_{t_{k+1}}^{a_0,b_0}]]d\ell\\
	&\leq\sum_{k=0}^{n-1}\frac{u^2(\varphi_{1,n}(a_0,b_0))^2}{\Delta_n^2}\int_0^1\Big\{\widetilde{\E}_{t_k,X_{t_k}^{a_0,b_0}}^{a(\ell),b_0}[(R^{a(\ell),b_0})^2]+C_1\sqrt{\Delta_n}\big(\widetilde{\E}_{t_k,X_{t_k}^{a_0,b_0}}^{a(\ell),b_0}[\vert R^{a(\ell),b_0}\vert^{2q_0}]\big)^{\frac{1}{q_0}} \\
	&\qquad\times \vert u\vert\varphi_{1,n}(a_0,b_0)\big(\frac{1}{\sqrt{X_{t_k}^{a_0,b_0}}}+\frac{\sqrt{\Delta_n}}{X_{t_k}^{a_0,b_0}}\vert u\vert\varphi_{1,n}(a_0,b_0)\big)   \big(1+\dfrac{1}{(X_{t_k}^{a_0,b_0})^{\frac{1}{8pq_1}(\frac{a(\ell)}{\sigma}-1)}}\big)\Big\}d\ell\\
	&\leq Cu^2(\varphi_{1,n}(a_0,b_0))^2\Delta_n^2\sum_{k=0}^{n-1}\int_0^1\big(\frac{1}{X_{t_k}^{a_0,b_0}} +\frac{1}{(X_{t_k}^{a_0,b_0})^{\frac{a(\ell)}{\sigma}-1}}\big)d\ell+Cu^2\vert u\vert \Delta_n^{\frac{5}{2}}(\varphi_{1,n}(a_0,b_0))^3\\
	&\times\sum_{k=0}^{n-1}\int_0^1\big(\frac{1}{X_{t_k}^{a_0,b_0}} +\frac{1}{(X_{t_k}^{a_0,b_0})^{\frac{1}{q_0}(\frac{a(\ell)}{\sigma}-1)}}\big) \big(\frac{1}{\sqrt{X_{t_k}^{a_0,b_0}}}+\dfrac{1}{(X_{t_k}^{a_0,b_0})^{\frac{1}{2}+\frac{1}{8pq_1}(\frac{a(\ell)}{\sigma}-1)}}+\vert u\vert\varphi_{1,n}(a_0,b_0)\\
	&\qquad\times\dfrac{\sqrt{\Delta_n}}{(X_{t_k}^{a_0,b_0})^{1+\frac{1}{8pq_1}(\frac{a(\ell)}{\sigma}-1)}}\big)d\ell,
	\end{align*}
	for some constant $C>0$, where $p=\frac{q_0}{q_0-1}$. It is worth noticing that when using \eqref{for3} of Lemma \ref{deviation1} with $p$, $q=q_0$ satisfying $\frac{1}{p}+\frac{1}{q}=1$, the condition $p=\frac{q_0}{q_0-1}<\frac{a(\ell)}{\sigma}-1$ is valid under condition {\bf(A)}. 
	
	For subcritical case $b_0>0$ with $\varphi_{1,n}(a_0,b_0)=\frac{1}{\sqrt{n\Delta_n}}$ (respectively critical case $b_0=0$ with $\varphi_{1,n}(a_0,0)=\frac{1}{\sqrt{\log(n\Delta_n)}}$), taking the expectation in both sides and using \eqref{major}, Lemma \ref{moment2} \textnormal{(i)} (respectively Lemma \ref{moment2} \textnormal{(ii)}) and standard calculations, for $n$ large enough we get $\widehat{\E}^{a_0,b_0}[\vert\sum_{k=0}^{n-1}\widehat{\E}^{a_0,b_0}[\zeta_{k,n}^2\vert \widehat{\mathcal{F}}_{t_k}]\vert]\leq C\Delta_n$, which tends to zero, since all the obtained powers are smaller than $\frac{a_0}{\sigma}$ for $\epsilon$ small enough and $q_1$ and $n$ large enough. This completes the proof.
\end{proof} 
\begin{lemma}\label{lemma3} Assume condition $\frac{a_0}{\sigma}>4$. Then, for both subcritical and critical cases as $n\to\infty$, we have
	\begin{align*}
	\sum_{k=0}^{n-1}\dfrac{u\varphi_{1,n}(a_0,b_0)}{\Delta_n}\int_0^1\big(R^{a_0,b_0}_{7}-\widetilde{\E}_{t_k,X_{t_{k}}^{a_0,b_0}}^{a(\ell),b_0}[R_5^{a(\ell),b_0}\vert Y_{t_{k+1}}^{a(\ell),b_0}=X_{t_{k+1}}^{a_0,b_0}]\big)d\ell\overset{\widehat{\P}^{a_0,b_0}}{\longrightarrow}0.
	\end{align*}
\end{lemma}
\begin{proof} 
Similarly as above, we apply Lemma \ref{zero} with
	$$
	\zeta_{k,n}:=\dfrac{u\varphi_{1,n}(a_0,b_0)}{\Delta_n}\int_0^1\big(R^{a_0,b_0}_{7}-\widetilde{\E}_{t_k,X_{t_{k}}^{a_0,b_0}}^{a(\ell),b_0}[R_5^{a(\ell),b_0}\vert Y_{t_{k+1}}^{a(\ell),b_0}=X_{t_{k+1}}^{a_0,b_0}]\big)d\ell.
	$$
	Using \eqref{for1} of Lemma \ref{change} with $V=R_5^{a(\ell),b_0}$,  $\widehat{\E}_{t_k,X_{t_k}^{a_0,b_0}}^{a_0,b_0}[R^{a_0,b_0}_{7}]=0$ and $\widetilde{\E}_{t_k,X_{t_{k}}^{a_0,b_0}}^{a(\ell),b_0}[R_5^{a(\ell),b_0}]=0$, \eqref{for3} of Lemma \ref{deviation1} with $q=p=2$ and $q_1$ large enough, and condition $\frac{a_0}{\sigma}>3$ we obtain for $n$ large enough
	\begin{align*} 
	&\vert\sum_{k=0}^{n-1}\widehat{\E}^{a_0,b_0}[\zeta_{k,n}\vert \widehat{\mathcal{F}}_{t_k}]\vert=\big\vert\sum_{k=0}^{n-1}\frac{u\varphi_{1,n}(a_0,b_0)}{\Delta_n}\int_0^1\widehat{\E}_{t_k,X_{t_k}^{a_0,b_0}}^{a_0,b_0}[\widetilde{\E}_{t_k,X_{t_{k}}^{a_0,b_0}}^{a(\ell),b_0}[R_5^{a(\ell),b_0}\vert Y_{t_{k+1}}^{a(\ell),b_0}=X_{t_{k+1}}^{a_0,b_0}]]d\ell\big\vert\\
	&\leq C\dfrac{u^2(\varphi_{1,n}(a_0,b_0))^2 }{\sqrt{\Delta_n}}\sum_{k=0}^{n-1}\int_0^1\big(\widetilde{\E}_{t_k,X_{t_k}^{a_0,b_0}}^{a(\ell),b_0}[\vert R_5^{a(\ell),b_0}\vert^2]\big)^{\frac{1}{2}}\big(\frac{1}{\sqrt{X_{t_k}^{a_0,b_0}}}+\frac{\sqrt{\Delta_n}}{X_{t_k}^{a_0,b_0}}\vert u\vert\varphi_{1,n}(a_0,b_0)\big)  \\
	&\qquad\times  \big(1+\dfrac{1}{(X_{t_k}^{a_0,b_0})^{\frac{1}{16q_1}(\frac{a(\ell)}{\sigma}-1)}}\big)d\ell.
	\end{align*}	
	For $p\geq 1$, using BDG's inequality, It\^o's formula, H\"older's inequality, \eqref{e1}, \eqref{e2}, we get 
	\begin{align}\label{sqrtincre}
	&\widetilde{\E}_{t_k,X_{t_k}^{a_0,b_0}}^{a,b}\big[\vert R_5^{a,b}\vert^{2p}\big]\leq C\dfrac{\Delta_n^{2p}}{(X_{t_k}^{a_0,b_0})^{2p}}\Delta_n^{p-1}\int_{t_k}^{t_{k+1}}\widetilde{\E}_{t_k,X_{t_k}^{a_0,b_0}}^{a,b}\big[\vert\sqrt{Y_s^{a,b}}-\sqrt{Y_{t_k}^{a,b}}\vert^{2p}\big]ds\notag\\
	&\leq C\dfrac{\Delta_n^{3p-1}}{(X_{t_k}^{a_0,b_0})^{2p}}\int_{t_k}^{t_{k+1}}\widetilde{\E}_{t_k,X_{t_k}^{a_0,b_0}}^{a,b}\big[\vert\int_{t_k}^{s}\big((\frac{a}{2}-\frac{\sigma}{4})\frac{1}{\sqrt{Y_u^{a,b}}}-\frac{b}{2}\sqrt{Y_u^{a,b}}\big)du+\int_{t_k}^{s}\sqrt{\frac{\sigma}{2}}dW_u\vert^{2p}\big]ds\notag\\
	&\leq C\frac{\Delta_n^{3p-1}}{(X_{t_k}^{a_0,b_0})^{2p}}\int_{t_k}^{t_{k+1}}\big((s-t_k)^{2p-1}\int_{t_k}^{s}\big(\widetilde{\E}_{t_k,X_{t_k}^{a_0,b_0}}^{a,b}\big[\frac{1}{\vert Y_u^{a,b}\vert^p}\big]+\vert b\vert\widetilde{\E}_{t_k,X_{t_k}^{a_0,b_0}}^{a,b}[\vert Y_u^{a,b}\vert^p]\big)du\notag\\
	&\qquad+(s-t_k)^p\big)ds\leq  C\Delta_n^{4p}\big(\frac{1}{(X_{t_k}^{a_0,b_0})^{3p}}+\frac{\vert b\vert}{(X_{t_k}^{a_0,b_0})^p}+\frac{1}{(X_{t_k}^{a_0,b_0})^{2p}}\big),
	\end{align}
	provided that $p<\frac{a}{\sigma}-1$. Then, applying \eqref{sqrtincre} with $p=1$, condition $\frac{a_0}{\sigma}>3$, we get
	\begin{align*} 
	&\vert\sum_{k=0}^{n-1}\widehat{\E}^{a_0,b_0}[\zeta_{k,n}\vert \widehat{\mathcal{F}}_{t_k}]\vert\leq Cu^2(\varphi_{1,n}(a_0,b_0))^2 \Delta_n^{\frac{3}{2}}\sum_{k=0}^{n-1}\int_0^1\big(\dfrac{1}{(X_{t_k}^{a_0,b_0})^{\frac{3}{2}}}+\dfrac{\vert b_0\vert}{(X_{t_k}^{a_0,b_0})^{\frac{1}{2}}}+\dfrac{1}{X_{t_k}^{a_0,b_0}}\big)\\
	&\qquad\times\big(\frac{1}{\sqrt{X_{t_k}^{a_0,b_0}}}+\dfrac{1}{(X_{t_k}^{a_0,b_0})^{\frac{1}{2}+\frac{1}{16q_1}(\frac{a(\ell)}{\sigma}-1)}}+\vert u\vert\varphi_{1,n}(a_0,b_0)\dfrac{\sqrt{\Delta_n}}{(X_{t_k}^{a_0,b_0})^{1+\frac{1}{16q_1}(\frac{a(\ell)}{\sigma}-1)}}\big)d\ell.
	\end{align*}
	For subcritical case $b_0>0$ with $\varphi_{1,n}(a_0,b_0)=\frac{1}{\sqrt{n\Delta_n}}$ (respectively critical case $b_0=0$ with $\varphi_{1,n}(a_0,0)=\frac{1}{\sqrt{\log(n\Delta_n)}}$), taking the expectation in both sides and using \eqref{major}, Lemma \ref{moment2} \textnormal{(i)} (respectively Lemma \ref{moment2} \textnormal{(ii)}) and standard calculations, for $n$ large enough we get $\widehat{\E}^{a_0,b_0}[\vert\sum_{k=0}^{n-1}\widehat{\E}^{a_0,b_0}[\zeta_{k,n}\vert \widehat{\mathcal{F}}_{t_k}]\vert]\leq C\sqrt{\Delta_n}$, which tends to zero, since all the obtained powers are smaller than $\frac{a_0}{\sigma}$ for $\epsilon$ small enough and $q_1$ and $n$ large enough. 
	
	Similarly, we use Jensen's inequality, \eqref{for1} of Lemma \ref{change} with $V=(R_5^{a(\ell),b_0})^2$, \eqref{for3} of Lemma \ref{deviation1} with $q=p=2$ and $q_1$ large enough, \eqref{sqrtincre} with $p\in\{1,2\}$, and condition $\frac{a_0}{\sigma}>3$ in the same way as above we obtain for $n$ large enough 
	\begin{align*}
	&\sum_{k=0}^{n-1}\widehat{\E}^{a_0,b_0}[\zeta_{k,n}^2\vert \widehat{\mathcal{F}}_{t_k}]\leq 2\frac{u^2(\varphi_{1,n}(a_0,b_0))^2}{\Delta_n^2}\sum_{k=0}^{n-1}\int_0^1\Big\{\widehat{\E}_{t_k,X_{t_k}^{a_0,b_0}}^{a_0,b_0}[(R^{a_0,b_0}_{7})^2]\\
	&\qquad+\widehat{\E}_{t_k,X_{t_k}^{a_0,b_0}}^{a_0,b_0}\big[\widetilde{\E}_{t_k,X_{t_{k}}^{a_0,b_0}}^{a(\ell),b_0}[(R_5^{a(\ell),b_0})^2\vert Y_{t_{k+1}}^{a(\ell),b_0}=X_{t_{k+1}}^{a_0,b_0}]\big]\Big\}d\ell\\	
	&\leq Cu^2\Delta_n^2(\varphi_{1,n}(a_0,b_0))^2\sum_{k=0}^{n-1}\big(\frac{1}{(X_{t_{k}}^{a_0,b_0})^3}+\frac{\vert b_0\vert}{X_{t_{k}}^{a_0,b_0}}+\frac{1}{(X_{t_{k}}^{a_0,b_0})^2}\big)+Cu^2\vert u\vert \Delta_n^{\frac{5}{2}}(\varphi_{1,n}(a_0,b_0))^3\\
	&\qquad\times\sum_{k=0}^{n-1}\int_0^1\big(\frac{1}{(X_{t_k}^{a_0,b_0})^{3}}+\frac{\vert b_0\vert}{X_{t_k}^{a_0,b_0}}+\frac{1}{(X_{t_k}^{a_0,b_0})^{2}}\big)\big(\frac{1}{\sqrt{X_{t_k}^{a_0,b_0}}}+\dfrac{1}{(X_{t_k}^{a_0,b_0})^{\frac{1}{2}+\frac{1}{16q_1}(\frac{a(\ell)}{\sigma}-1)}}\\
	&\qquad+\vert u\vert\varphi_{1,n}(a_0,b_0)\dfrac{\sqrt{\Delta_n}}{(X_{t_k}^{a_0,b_0})^{1+\frac{1}{16q_1}(\frac{a(\ell)}{\sigma}-1)}}\big)d\ell,
	\end{align*}
	where the estimate for $\widehat{\E}_{t_k,X_{t_k}^{a_0,b_0}}^{a_0,b_0}[(R^{a_0,b_0}_{7})^2]$ is proceeded similarly as for \eqref{sqrtincre}.
	
	For subcritical case $b_0>0$ (respectively critical case $b_0=0$), taking the expectation in both sides and using \eqref{major}, Lemma \ref{moment2} \textnormal{(i)} (respectively Lemma \ref{moment2} \textnormal{(ii)}) and standard calculations, under $\frac{a_0}{\sigma}>4$ we get $\widehat{\E}^{a_0,b_0}[\vert\sum_{k=0}^{n-1}\widehat{\E}^{a_0,b_0}[\zeta_{k,n}^2\vert \widehat{\mathcal{F}}_{t_k}]\vert]\leq C\Delta_n$ for $n$ large enough, which tends to zero, since all the obtained powers are smaller than $\frac{a_0}{\sigma}$ for $\epsilon$ small enough and $q_1$ and $n$ large enough. This completes the proof.	 
\end{proof}

\begin{lemma}\label{lemma2} Assume condition $\frac{a_0}{\sigma}>3$. Then, for the subcritical case as $n\to\infty$, we have
	\begin{align*}
	\sum_{k=0}^{n-1}\dfrac{u\varphi_{1,n}(a_0,b_0)}{\Delta_n}\int_0^1\big(R^{a_0,b_0}_{6}-\widetilde{\E}_{t_k,X_{t_{k}}^{a_0,b_0}}^{a(\ell),b_0}[R_4^{a(\ell),b_0}\vert Y_{t_{k+1}}^{a(\ell),b_0}=X_{t_{k+1}}^{a_0,b_0}]\big)d\ell\overset{\widehat{\P}^{a_0,b_0}}{\longrightarrow}0.
	\end{align*}
	For the critical case when $b_0=0$, we have $R_4^{a(\ell),b_0}=R^{a_0,b_0}_{6}=0$. 
\end{lemma}
\begin{proof}
	Similarly as above, we apply Lemma \ref{zero} with 
	$$
	\zeta_{k,n}:=\dfrac{u\varphi_{1,n}(a_0,b_0)}{\Delta_n}\int_0^1\big(R^{a_0,b_0}_{6}-\widetilde{\E}_{t_k,X_{t_{k}}^{a_0,b_0}}^{a(\ell),b_0}[R_4^{a(\ell),b_0}\vert Y_{t_{k+1}}^{a(\ell),b_0}=X_{t_{k+1}}^{a_0,b_0}]\big)d\ell.
	$$
	Using \eqref{for1} of Lemma \ref{change} with $V=R_4^{a(\ell),b_0}$, \eqref{for2} of Lemma \ref{change} with $\widehat{V}=R_6^{a_0,b_0}$,  \eqref{for3} and \eqref{for4} of Lemma \ref{deviation1} with $q=p=2$ and $q_1$ large enough, and condition $\frac{a_0}{\sigma}>3$, for $n$ large enough $\vert\sum_{k=0}^{n-1}\widehat{\E}^{a_0,b_0}[\zeta_{k,n}\vert \widehat{\mathcal{F}}_{t_k}]\vert$ is bounded by
	\begin{align*} 
	&C\dfrac{\vert u\vert\varphi_{1,n}(a_0,b_0) }{\Delta_n}\sum_{k=0}^{n-1}\int_0^1\Big\{\dfrac{\sqrt{\Delta_n}}{\sqrt{X_{t_k}^{a_0,b_0}}}\big\vert\int_{a(\ell)}^{a_0}\big(\widehat{\E}_{t_k,X_{t_k}^{a_0,b_0}}^{\alpha,b_0}\big[(R_6^{\alpha,b_0})^2\big]\big)^{\frac{1}{2}}d\alpha\big\vert +\sqrt{\Delta_n} \vert u\vert\varphi_{1,n}(a_0,b_0)\\
	&\quad\times\big(\widetilde{\E}_{t_k,X_{t_k}^{a_0,b_0}}^{a(\ell),b_0}[\vert R_4^{a(\ell),b_0}\vert^2]\big)^{\frac{1}{2}}\big(\frac{1}{\sqrt{X_{t_k}^{a_0,b_0}}}+\frac{\sqrt{\Delta_n}}{X_{t_k}^{a_0,b_0}}\vert u\vert\varphi_{1,n}(a_0,b_0)\big)  \big(1+\dfrac{1}{(X_{t_k}^{a_0,b_0})^{\frac{1}{16q_1}(\frac{a(\ell)}{\sigma}-1)}}\big)\Big\} d\ell,
	\end{align*}
	where we have used  $\widehat{\E}_{t_k,X_{t_k}^{a_0,b_0}}^{a(\ell),b_0}[R_6^{a(\ell),b_0}]-\widetilde{\E}_{t_k,X_{t_k}^{a_0,b_0}}^{a(\ell),b_0}[R_4^{a(\ell),b_0}]=0$ since $Y^{a(\ell),b_0}$ is the independent copy of $X^{a(\ell),b_0}$.
	
	For $p\geq 1$, using equation \eqref{c2eq1rajoute}, BDG's inequality, H\"older's inequality and \eqref{e1}, we get 
	\begin{align}\label{incrementY}
	&\widetilde{\E}_{t_k,X_{t_k}^{a_0,b_0}}^{a,b}\big[\vert \int_{t_k}^{t_{k+1}}(Y_s^{a,b}-Y_{t_k}^{a,b})ds\vert^{2p}\big]\leq \Delta_n^{2p-1}\int_{t_k}^{t_{k+1}}\widetilde{\E}_{t_k,X_{t_k}^{a_0,b_0}}^{a,b}\big[\vert Y_s^{a,b}-Y_{t_k}^{a,b}\vert^{2p}\big]ds\notag\\
	&\leq C\Delta_n^{2p-1}\int_{t_k}^{t_{k+1}}\Big((s-t_k)^{2p-1}\int_{t_k}^{s}\big(1+\vert b\vert\widetilde{\E}_{t_k,X_{t_k}^{a_0,b_0}}^{a,b}[\vert Y_u^{a,b} \vert^{2p}]\big)du+(s-t_k)^{p-1}\notag\\
	&\qquad\times\int_{t_k}^{s}\widetilde{\E}_{t_k,X_{t_k}^{a_0,b_0}}^{a,b}[\vert Y_u^{a,b} \vert^{p}]du\Big)ds\leq C\Delta_n^{3p}\big(1+\vert b\vert(X_{t_k}^{a_0,b_0})^{2p}+(X_{t_k}^{a_0,b_0})^{p}\big).
	\end{align}
	Then, applying \eqref{incrementY} with $p=1$, we get
	\begin{align*} 
	&\Big\vert\sum_{k=0}^{n-1}\widehat{\E}^{a_0,b_0}[\zeta_{k,n}\vert \widehat{\mathcal{F}}_{t_k}]\Big\vert \leq Cu^2\vert b_0\vert(\varphi_{1,n}(a_0,b_0))^2 \Delta_n^{2}\sum_{k=0}^{n-1}\int_0^1\Big\{\big(\dfrac{1}{(X_{t_k}^{a_0,b_0})^{\frac{3}{2}}}+\dfrac{\vert b_0\vert}{\sqrt{X_{t_k}^{a_0,b_0}}}+\dfrac{1}{X_{t_k}^{a_0,b_0}}\big)\\
	&\qquad+\big(\dfrac{1}{X_{t_k}^{a_0,b_0}}+\vert b_0\vert+\dfrac{1}{(X_{t_k}^{a_0,b_0})^{\frac{1}{2}}}\big)\big(\frac{1}{\sqrt{X_{t_k}^{a_0,b_0}}}+\dfrac{1}{(X_{t_k}^{a_0,b_0})^{\frac{1}{2}+\frac{1}{16q_1}(\frac{a(\ell)}{\sigma}-1)}}\\
	&\qquad+\vert u\vert\varphi_{1,n}(a_0,b_0)\dfrac{\sqrt{\Delta_n}}{(X_{t_k}^{a_0,b_0})^{1+\frac{1}{16q_1}(\frac{a(\ell)}{\sigma}-1)}}\big)\Big\}d\ell.
	\end{align*}
	For subcritical case $b_0>0$, using \eqref{major} and Lemma \ref{moment2} \textnormal{(i)}, $\widehat{\E}^{a_0,b_0}[\vert\sum_{k=0}^{n-1}\widehat{\E}^{a_0,b_0}[\zeta_{k,n}\vert \widehat{\mathcal{F}}_{t_k}]\vert]\leq C\Delta_n$, which tends to zero, since all the obtained powers are smaller than $\frac{a_0}{\sigma}$ for $\epsilon$ small enough and $q_1$ and $n$ large enough. 
	
	Next, using Jensen's inequality, \eqref{for1} of Lemma \ref{change} with $V=(R_4^{a(\ell),b_0})^2$, \eqref{for3} of Lemma \ref{deviation1} with $q=p=2$ and $q_1$ large enough, \eqref{incrementY} with $p\in\{1,2\}$, and condition $\frac{a_0}{\sigma}>3$ in the same way as above we obtain for $n$ large enough, similarly,
	\begin{align*}
	&\sum_{k=0}^{n-1}\widehat{\E}^{a_0,b_0}[\zeta_{k,n}^2\vert \widehat{\mathcal{F}}_{t_k}]\leq Cu^2b_0^2\Delta_n^3(\varphi_{1,n}(a_0,b_0))^2\sum_{k=0}^{n-1}\big(\frac{1}{(X_{t_{k}}^{a_0,b_0})^2}+\vert b_0\vert+\frac{1}{X_{t_{k}}^{a_0,b_0}}\big)\\
	&\qquad+Cu^2\vert u\vert b_0^2\Delta_n^{\frac{7}{2}}(\varphi_{1,n}(a_0,b_0))^3\sum_{k=0}^{n-1}\int_0^1\big(\frac{1}{(X_{t_{k}}^{a_0,b_0})^2}+\vert b_0\vert+\frac{1}{X_{t_{k}}^{a_0,b_0}}\big)\big(\frac{1}{\sqrt{X_{t_k}^{a_0,b_0}}}\\
	&\qquad+\dfrac{1}{(X_{t_k}^{a_0,b_0})^{\frac{1}{2}+\frac{1}{16q_1}(\frac{a(\ell)}{\sigma}-1)}}+\vert u\vert\varphi_{1,n}(a_0,b_0)\dfrac{\sqrt{\Delta_n}}{(X_{t_k}^{a_0,b_0})^{1+\frac{1}{16q_1}(\frac{a(\ell)}{\sigma}-1)}}\big)d\ell ,
	\end{align*}
	where the estimate for $\widehat{\E}_{t_k,X_{t_k}^{a_0,b_0}}^{a_0,b_0}[(R^{a_0,b_0}_{6})^2]$ is proceeded similarly as for \eqref{incrementY}.
	 
	For subcritical case $b_0>0$, using \eqref{major} and Lemma \ref{moment2} \textnormal{(i)},  $\widehat{\E}^{a_0,b_0}[\vert\sum_{k=0}^{n-1}\widehat{\E}^{a_0,b_0}[\zeta_{k,n}^2\vert \widehat{\mathcal{F}}_{t_k}]\vert]\leq C\Delta_n^2$, which tends to zero, since all the obtained powers are smaller than $\frac{a_0}{\sigma}$ for $\epsilon$ small enough and $q_1$ and $n$ large enough. This completes the proof.
\end{proof}

\begin{lemma}\label{lemma4} Assume condition {\bf(A)}. Then, as $n\to\infty$, we have
	\begin{align*} 
	\sum_{k=0}^{n-1}\dfrac{v\varphi_{2,n}(a_0,b_0)}{\Delta_n}\int_0^1\widetilde{\E}_{t_k,X_{t_k}^{a_0,b_0}}^{a_n,b(\ell)}[H^{a_n,b(\ell)}\vert Y^{a_n,b(\ell)}_{t_{k+1}}=X_{t_{k+1}}^{a_0,b_0}]d\ell\overset{\widehat{\P}^{a_0,b_0}}{\longrightarrow}0.
	\end{align*}
\end{lemma}
\begin{proof} 
Similarly as above, we apply Lemma \ref{zero} with
$$
\zeta_{k,n}:=\dfrac{v\varphi_{2,n}(a_0,b_0)}{\Delta_n}\int_0^1\widetilde{\E}_{t_k,X_{t_k}^{a_0,b_0}}^{a_n,b(\ell)}[H^{a_n,b(\ell)}\vert Y^{a_n,b(\ell)}_{t_{k+1}}=X_{t_{k+1}}^{a_0,b_0}]d\ell.
$$ 
Using \eqref{for1} of Lemma \ref{change} with $V=H^{a_n,b(\ell)}$,  $\widetilde{\E}_{t_k,X_{t_k}^{a_0,b_0}}^{a_n,b(\ell)}[H^{a_n,b(\ell)}]=0$ by \eqref{es3}, \eqref{for3} of Lemma \ref{deviation1} with $q=p=2$ and $q_1$ large enough, \eqref{es4} of Lemma \ref{estimate} with $q=1$, and condition {\bf(A)}, we obtain for $n$ large enough
\begin{align*} 
&\vert\sum_{k=0}^{n-1}\widehat{\E}^{a_0,b_0}[\zeta_{k,n}\vert \widehat{\mathcal{F}}_{t_k}]\vert\leq C\sum_{k=0}^{n-1}\dfrac{\vert v\vert\varphi_{2,n}(a_0,b_0)}{\Delta_n}\int_0^1\sqrt{\Delta_n}\big(\widetilde{\E}_{t_k,X_{t_k}^{a_0,b_0}}^{a_n,b(\ell)}[\vert H^{a_n,b(\ell)}\vert^2]\big)^{\frac{1}{2}} \\
&\qquad\times e^{C_2v^2(\varphi_{2,n}(a_0,b_0))^2\Delta_nX_{t_k}^{a_0,b_0}}\Big(\vert u\vert\varphi_{1,n}(a_0,b_0)\big(\frac{1}{\sqrt{X_{t_k}^{a_0,b_0}}}+\frac{\sqrt{\Delta_n}}{X_{t_k}^{a_0,b_0}}\vert u\vert\varphi_{1,n}(a_0,b_0)\\
&\qquad+\sqrt{\Delta_n}\vert v\vert\varphi_{2,n}(a_0,b_0)\big)    (1+\dfrac{1}{(X_{t_k}^{a_0,b_0})^{\frac{1}{16q_1}(\frac{a_n}{\sigma}-1)}})+\vert v\vert\varphi_{2,n}(a_0,b_0)\big(1+\sqrt{X_{t_k}^{a_0,b_0}}\\
&\qquad+\sqrt{\Delta_n}\vert v\vert\varphi_{2,n}(a_0,b_0)X_{t_k}^{a_0,b_0}\big) \Big)d\ell
\leq C\vert v\vert\varphi_{2,n}(a_0,b_0)\Delta_n^{\frac{3}{2}}\sum_{k=0}^{n-1}e^{C_2v^2(\varphi_{2,n}(a_0,b_0))^2\Delta_nX_{t_k}^{a_0,b_0}}\\
&\qquad\times\Big(\vert u\vert\varphi_{1,n}(a_0,b_0)\big(\frac{1}{\sqrt{X_{t_k}^{a_0,b_0}}}+\frac{\sqrt{\Delta_n}}{X_{t_k}^{a_0,b_0}}\vert u\vert\varphi_{1,n}(a_0,b_0)+\sqrt{\Delta_n}\vert v\vert\varphi_{2,n}(a_0,b_0)\big)  \\
&\qquad\times \big(1+\dfrac{1}{(X_{t_k}^{a_0,b_0})^{\frac{1}{16q_1}(\frac{a_n}{\sigma}-1)}}\big)+\vert v\vert\varphi_{2,n}(a_0,b_0)\big(1+\sqrt{X_{t_k}^{a_0,b_0}}+\sqrt{\Delta_n}\vert v\vert\varphi_{2,n}(a_0,b_0)X_{t_k}^{a_0,b_0}\big)\Big)\\
&\qquad\times(1+X_{t_k}^{a_0,b_0})\big(\frac{1}{\sqrt{X_{t_k}^{a_0,b_0}}} +\frac{1}{(X_{t_k}^{a_0,b_0})^{\frac{1}{2}(\frac{a_n}{\sigma}-1)}}\big).
\end{align*}
Then, using Young's inequality for products with $\frac{1}{\overline{p}_0}+\frac{1}{\overline{q}_0}=1$ and $\overline{q}_0$ close to $1$, for $n$ large enough $\vert\sum_{k=0}^{n-1}\widehat{\E}^{a_0,b_0}[\zeta_{k,n}\vert \widehat{\mathcal{F}}_{t_k}]\vert$ is bounded by
\begin{align*} 
&C\vert v\vert\varphi_{2,n}(a_0,b_0)\Delta_n^{\frac{3}{2}}\frac{1}{\overline{p}_0}\big(\vert u\vert\varphi_{1,n}(a_0,b_0)+\vert v\vert\varphi_{2,n}(a_0,b_0)\big)\sum_{k=0}^{n-1}e^{C_2\overline{p}_0v^2(\varphi_{2,n}(a_0,b_0))^2\Delta_nX_{t_k}^{a_0,b_0}}\\
&+C\vert v\vert\varphi_{2,n}(a_0,b_0)\Delta_n^{\frac{3}{2}}\frac{1}{\overline{q}_0}\sum_{k=0}^{n-1}\Big(\vert u\vert\varphi_{1,n}(a_0,b_0)\big((X_{t_k}^{a_0,b_0})^{\frac{\overline{q}_0}{2}}+(\sqrt{\Delta_n}\vert v\vert\varphi_{2,n}(a_0,b_0))^{\overline{q}_0}(X_{t_k}^{a_0,b_0})^{\overline{q}_0}\\
&+\dfrac{1}{(X_{t_k}^{a_0,b_0})^{\overline{q}_0(\frac{1}{2}+\frac{1}{16q_1}(\frac{a_n}{\sigma}-1))}} +\frac{(\sqrt{\Delta_n})^{\overline{q}_0}}{(X_{t_k}^{a_0,b_0})^{\overline{q}_0(1+\frac{1}{16q_1}(\frac{a_n}{\sigma}-1))}}(\vert u\vert\varphi_{1,n}(a_0,b_0))^{\overline{q}_0} \big)+\vert v\vert\varphi_{2,n}(a_0,b_0)\\
& \times\big(1+(X_{t_k}^{a_0,b_0})^{\frac{3}{2}\overline{q}_0}+(\sqrt{\Delta_n}\vert v\vert\varphi_{2,n}(a_0,b_0))^{\overline{q}_0}(X_{t_k}^{a_0,b_0})^{2\overline{q}_0}\big)\Big)\big(\frac{1}{(X_{t_k}^{a_0,b_0})^{\frac{\overline{q}_0}{2}}} +\frac{1}{(X_{t_k}^{a_0,b_0})^{\frac{\overline{q}_0}{2}(\frac{a_n}{\sigma}-1)}}\big).
\end{align*}
Now using $(1.9)$ of \cite[Proposition 1.2.4]{A15}, for $n$ large enough, we get
\begin{align}\label{boundedexp}
M_n:=\widehat{\E}^{a_0,b_0}\Big[e^{C_2\overline{p}_0v^2(\varphi_{2,n}(a_0,b_0))^2\Delta_nX_{t_k}^{a_0,b_0}}\Big]\leq C,
\end{align}
for some constant $C>0$ for three cases. Indeed, we prove the convergence of $M_n$ toward $1$ as $n\to\infty$. For this, for subcritical case $b_0>0$ (respectively critical case $b_0=0$, respectively supercritical case $b_0<0$), we take $\varphi_{2,n}(a_0,b_0)=\frac{1}{\sqrt{n\Delta_n}}$ (respectively $\varphi_{2,n}(a_0,0)=\frac{1}{n\Delta_n}$, respectively   $\varphi_{2,n}(a_0,b_0)=e^{b_0\frac{n\Delta_n}{2}}$) and use that $1-e^{-b_0t_k}\leq b_0t_k$, $\frac{t_k}{n\Delta_n}\leq 1$ and $e^{-b_0t_k}\leq 1$ (respectively $\frac{t_k}{n\Delta_n}\leq 1$, respectively $e^{b_0(n\Delta_n-t_k)}\leq 1$ and $n\Delta_n\to\infty$).

Thus, for subcritical case $b_0>0$ with $(\varphi_{1,n}(a_0,b_0),\varphi_{2,n}(a_0,b_0))=(\frac{1}{\sqrt{n\Delta_n}},\frac{1}{\sqrt{n\Delta_n}})$ (respectively critical case $b_0=0$ with $(\varphi_{1,n}(a_0,0),\varphi_{2,n}(a_0,0))=(\frac{1}{\sqrt{\log(n\Delta_n)}},\frac{1}{n\Delta_n})$, respectively supercritical case $b_0<0$ with $(\varphi_{1,n}(a_0,b_0),\varphi_{2,n}(a_0,b_0))=(0,e^{b_0\frac{n\Delta_n}{2}})$), using \eqref{boundedexp}, Lemma \ref{moment2} \textnormal{(i)} (respectively Lemma \ref{moment2} \textnormal{(ii)}, respectively Lemma \ref{moment2} \textnormal{(iii)}, \eqref{supercriticalconvergence2}) and standard calculations, we get $\widehat{\E}^{a_0,b_0}[\vert\sum_{k=0}^{n-1}\widehat{\E}^{a_0,b_0}[\zeta_{k,n}\vert \widehat{\mathcal{F}}_{t_k}]\vert]\leq C\sqrt{\Delta_n}$ (respectively $\widehat{\E}^{a_0,0}[\vert\sum_{k=0}^{n-1}\widehat{\E}^{a_0,0}[\zeta_{k,n}\vert \widehat{\mathcal{F}}_{t_k}]\vert]\leq C(n\Delta_n^3)^{\frac{1}{4}}$, respectively $\widehat{\E}^{a_0,b_0}[\vert\sum_{k=0}^{n-1}\widehat{\E}^{a_0,b_0}[\zeta_{k,n}\vert \widehat{\mathcal{F}}_{t_k}]\vert]\leq C(e^{b_0n\Delta_n}n\Delta_n+(\Delta_n^{2}e^{-b_0n\Delta_n})^{\frac{1}{4}})$), which tends to zero, since all the obtained powers are smaller than $\frac{a_0}{\sigma}$ for $\overline{q}_0$ close to $1$ and $q_1$ and $n$ large enough.

Next, using Jensen's inequality, \eqref{for1} of Lemma \ref{change} with $V=(H^{a_n,b(\ell)})^2$, \eqref{for3} of Lemma \ref{deviation1} with $q=q_0=\frac{13+\sqrt{89}}{20}$ and $q_1$ large enough, \eqref{es4} of Lemma \ref{estimate} with $q\in\{1,q_0\}$, condition {\bf(A)}, and Young's inequality for products with $\frac{1}{\overline{p}_0}+\frac{1}{\overline{q}_0}=1$ and $\overline{q}_0$ close to $1$, the same arguments as above work here and then for $n$ large enough $\sum_{k=0}^{n-1}\widehat{\E}^{a_0,b_0}[\zeta_{k,n}^2\vert \widehat{\mathcal{F}}_{t_k}]$ is bounded by
\begin{align*}
&\sum_{k=0}^{n-1}\frac{v^2(\varphi_{2,n}(a_0,b_0))^2}{\Delta_n^2}\int_0^1\Big\{\widetilde{\E}_{t_k,X_{t_k}^{a_0,b_0}}^{a_n,b(\ell)}
[(H^{a_n,b(\ell)})^2]+C_1\sqrt{\Delta_n}\big(\widetilde{\E}_{t_k,X_{t_k}^{a_0,b_0}}^{a_n,b(\ell)}[\vert H^{a_n,b(\ell)}\vert^{2q_0}]\big)^{\frac{1}{q_0}} \\
&\qquad\times e^{C_2v^2(\varphi_{2,n}(a_0,b_0))^2\Delta_nX_{t_k}^{a_0,b_0}}\Big(\vert u\vert\varphi_{1,n}(a_0,b_0)\big(\frac{1}{\sqrt{X_{t_k}^{a_0,b_0}}}+\frac{\sqrt{\Delta_n}}{X_{t_k}^{a_0,b_0}}\vert u\vert\varphi_{1,n}(a_0,b_0)   \\
&\qquad+\sqrt{\Delta_n}\vert v\vert\varphi_{2,n}(a_0,b_0)\big) \big(1+\dfrac{1}{(X_{t_k}^{a_0,b_0})^{\frac{1}{8pq_1}(\frac{a_n}{\sigma}-1)}}\big)+\vert v\vert\varphi_{2,n}(a_0,b_0)\big(1+\sqrt{X_{t_k}^{a_0,b_0}}\\
&\qquad+\sqrt{\Delta_n}\vert v\vert\varphi_{2,n}(a_0,b_0)X_{t_k}^{a_0,b_0}\big) \Big)\Big\}d\ell\\
&\leq Cv^2(\varphi_{2,n}(a_0,b_0))^2\Delta_n^2\sum_{k=0}^{n-1}(1+(X_{t_k}^{a_0,b_0})^2)\big(\frac{1}{X_{t_k}^{a_0,b_0}} +\frac{1}{(X_{t_k}^{a_0,b_0})^{\frac{a_n}{\sigma}-1}}\big)+Cv^2(\varphi_{2,n}(a_0,b_0))^2\Delta_n^{\frac{5}{2}}\\
&\qquad\times\frac{1}{\overline{p}_0}\big(\vert u\vert\varphi_{1,n}(a_0,b_0)+\vert v\vert\varphi_{2,n}(a_0,b_0)\big)\sum_{k=0}^{n-1}e^{C_2\overline{p}_0v^2(\varphi_{2,n}(a_0,b_0))^2\Delta_nX_{t_k}^{a_0,b_0}}+Cv^2(\varphi_{2,n}(a_0,b_0))^2\\
&\qquad\times\frac{\Delta_n^{\frac{5}{2}}}{\overline{q}_0}\sum_{k=0}^{n-1}\Big(\vert u\vert\varphi_{1,n}(a_0,b_0)\big((X_{t_k}^{a_0,b_0})^{\frac{3}{2}\overline{q}_0}+(\sqrt{\Delta_n}\vert v\vert\varphi_{2,n}(a_0,b_0))^{\overline{q}_0}(X_{t_k}^{a_0,b_0})^{2\overline{q}_0}\\
&+\dfrac{1}{(X_{t_k}^{a_0,b_0})^{\overline{q}_0(\frac{1}{2}+\frac{1}{8pq_1}(\frac{a_n}{\sigma}-1))}} +\frac{(\sqrt{\Delta_n})^{\overline{q}_0}}{(X_{t_k}^{a_0,b_0})^{\overline{q}_0(1+\frac{1}{8pq_1}(\frac{a_n}{\sigma}-1))}}(\vert u\vert\varphi_{1,n}(a_0,b_0))^{\overline{q}_0} \big)+\vert v\vert\varphi_{2,n}(a_0,b_0)\\
&\times\big(1+(X_{t_k}^{a_0,b_0})^{\frac{5}{2}\overline{q}_0} +(\sqrt{\Delta_n}\vert v\vert\varphi_{2,n}(a_0,b_0))^{\overline{q}_0}(X_{t_k}^{a_0,b_0})^{3\overline{q}_0}\big) \Big)\big(\frac{1}{(X_{t_k}^{a_0,b_0})^{\overline{q}_0}} +\frac{1}{(X_{t_k}^{a_0,b_0})^{\frac{\overline{q}_0}{q_0}(\frac{a_n}{\sigma}-1)}}\big),
\end{align*}
where $p=\frac{q_0}{q_0-1}$. 

Thus, for subcritical case $b_0>0$ (respectively critical case $b_0=0$, respectively supercritical case $b_0<0$), using \eqref{boundedexp}, Lemma \ref{moment2} \textnormal{(i)} (respectively Lemma \ref{moment2} \textnormal{(ii)}, respectively Lemma \ref{moment2} \textnormal{(iii)}, \eqref{supercriticalconvergence2}) and standard calculations, we get $\widehat{\E}^{a_0,b_0}[\vert\sum_{k=0}^{n-1}\widehat{\E}^{a_0,b_0}[\zeta_{k,n}^2\vert \widehat{\mathcal{F}}_{t_k}]\vert]\leq C\Delta_n$ (respectively $\widehat{\E}^{a_0,0}[\vert\sum_{k=0}^{n-1}\widehat{\E}^{a_0,0}[\zeta_{k,n}^2\vert \widehat{\mathcal{F}}_{t_k}]\vert]\leq C\Delta_n$, respectively $\widehat{\E}^{a_0,b_0}[\vert\sum_{k=0}^{n-1}\widehat{\E}^{a_0,b_0}[\zeta_{k,n}^2\vert \widehat{\mathcal{F}}_{t_k}]\vert]\leq C(\Delta_n^{2}e^{-b_0n\Delta_n})^{\frac{3}{4}}$), which tends to zero, since all the obtained powers are smaller than $\frac{a_0}{\sigma}$ for $\overline{q}_0$ close to $1$ and $q_1$ and $n$ large enough. This completes the proof.
\end{proof}

\begin{lemma}\label{lemma6} Assume condition $\frac{a_0}{\sigma}>3$. Then, as $n\to\infty$, we have
	\begin{align*}
	\sum_{k=0}^{n-1}\dfrac{v\varphi_{2,n}(a_0,b_0)}{\Delta_n}\int_0^1\big(-X_{t_{k}}^{a_0,b_0}R_7^{a_0,b_0}+\widetilde{\E}_{t_k,X_{t_{k}}^{a_0,b_0}}^{a_n,b(\ell)}[X_{t_{k}}^{a_0,b_0}R_5^{a_n,b(\ell)}\vert Y_{t_{k+1}}^{a_n,b(\ell)}=X_{t_{k+1}}^{a_0,b_0}]\big)d\ell\overset{\widehat{\P}^{a_0,b_0}}{\longrightarrow}0.
	\end{align*}
\end{lemma}
\begin{proof} 	
	Similarly as above, we apply Lemma \ref{zero} with 
	$$
	\zeta_{k,n}:=\dfrac{v\varphi_{2,n}(a_0,b_0)}{\Delta_n}\int_0^1\big(-X_{t_{k}}^{a_0,b_0}R_7^{a_0,b_0}+\widetilde{\E}_{t_k,X_{t_{k}}^{a_0,b_0}}^{a_n,b(\ell)}[X_{t_{k}}^{a_0,b_0}R_5^{a_n,b(\ell)}\vert Y_{t_{k+1}}^{a_n,b(\ell)}=X_{t_{k+1}}^{a_0,b_0}]\big)d\ell.
	$$
	We use \eqref{for1} of Lemma \ref{change} with $V=X_{t_{k}}^{a_0,b_0}R_5^{a_n,b(\ell)}$, the fact that $\widehat{\E}_{t_k,X_{t_k}^{a_0,b_0}}^{a_0,b_0}[X_{t_{k}}^{a_0,b_0}R_7^{a_0,b_0}]=0$ and $\widetilde{\E}_{t_k,X_{t_{k}}^{a_0,b_0}}^{a_n,b(\ell)}[X_{t_{k}}^{a_0,b_0}R_5^{a_n,b(\ell)}]=0$, \eqref{for3} of Lemma \ref{deviation1} with $q=p=2$ and $q_1$ large enough, \eqref{sqrtincre} with $p=1$, and Young's inequality for products with $\frac{1}{\overline{p}_0}+\frac{1}{\overline{q}_0}=1$ and $\overline{q}_0$ close to $1$, under $\frac{a_0}{\sigma}>3$ for $n$ large enough $\vert\sum_{k=0}^{n-1}\widehat{\E}^{a_0,b_0}[\zeta_{k,n}\vert \widehat{\mathcal{F}}_{t_k}]\vert$ is bounded by
	\begin{align*} 
	&C\dfrac{\vert v\vert\varphi_{2,n}(a_0,b_0) }{\Delta_n}\sum_{k=0}^{n-1}\int_0^1\sqrt{\Delta_n}X_{t_{k}}^{a_0,b_0}\big(\widetilde{\E}_{t_k,X_{t_k}^{a_0,b_0}}^{a_n,b(\ell)}[\vert R_5^{a_n,b(\ell)}\vert^2]\big)^{\frac{1}{2}} e^{C_2v^2(\varphi_{2,n}(a_0,b_0))^2\Delta_nX_{t_k}^{a_0,b_0}}\\
	&\qquad\times \big(\vert u\vert\varphi_{1,n}(a_0,b_0)\big(\frac{1}{\sqrt{X_{t_k}^{a_0,b_0}}}+\frac{\sqrt{\Delta_n}}{X_{t_k}^{a_0,b_0}}\vert u\vert\varphi_{1,n}(a_0,b_0)+\sqrt{\Delta_n}\vert v\vert\varphi_{2,n}(a_0,b_0)\big)   \\
	&\qquad\times (1+\dfrac{1}{(X_{t_k}^{a_0,b_0})^{\frac{1}{16q_1}(\frac{a_n}{\sigma}-1)}})+\vert v\vert\varphi_{2,n}(a_0,b_0)(1+\sqrt{X_{t_k}^{a_0,b_0}}+\sqrt{\Delta_n}\vert v\vert\varphi_{2,n}(a_0,b_0)X_{t_k}^{a_0,b_0}) \big)d\ell\\
	&\leq C\vert v\vert\varphi_{2,n}(a_0,b_0)\Delta_n^{\frac{3}{2}}\frac{1}{\overline{p}_0}\big(\vert u\vert\varphi_{1,n}(a_0,b_0)+\vert v\vert\varphi_{2,n}(a_0,b_0)\big)\sum_{k=0}^{n-1}e^{C_2\overline{p}_0v^2(\varphi_{2,n}(a_0,b_0))^2\Delta_nX_{t_k}^{a_0,b_0}}\\
	&\qquad+C\vert v\vert\varphi_{2,n}(a_0,b_0)\Delta_n^{\frac{3}{2}}\frac{1}{\overline{q}_0}\sum_{k=0}^{n-1}\Big(\vert u\vert\varphi_{1,n}(a_0,b_0)\big(1+(\vert b_0\vert+\vert v\vert\varphi_{2,n}(a_0,b_0))^{\overline{q}_0}\\
	&\qquad\times(\sqrt{\Delta_n}\vert v\vert\varphi_{2,n}(a_0,b_0))^{\overline{q}_0}(X_{t_k}^{a_0,b_0})^{\frac{\overline{q}_0}{2}}+\dfrac{1}{(X_{t_k}^{a_0,b_0})^{\overline{q}_0(1+\frac{1}{16q_1}(\frac{a_n}{\sigma}-1))}}+\frac{(\sqrt{\Delta_n})^{\overline{q}_0}}{(X_{t_k}^{a_0,b_0})^{\overline{q}_0(\frac{3}{2}+\frac{1}{16q_1}(\frac{a_n}{\sigma}-1))}}\\
	&\qquad    \times(\vert u\vert\varphi_{1,n}(a_0,b_0))^{\overline{q}_0}\big) +\vert v\vert\varphi_{2,n}(a_0,b_0)\big(1+\frac{1}{(X_{t_k}^{a_0,b_0})^{\frac{\overline{q}_0}{2}}}+(\vert b_0\vert+\vert v\vert\varphi_{2,n}(a_0,b_0))^{\overline{q}_0}\\
	& \qquad\times(X_{t_k}^{a_0,b_0})^{\overline{q}_0}+(\vert b_0\vert+\vert v\vert\varphi_{2,n}(a_0,b_0))^{\overline{q}_0}(\sqrt{\Delta_n}\vert v\vert\varphi_{2,n}(a_0,b_0))^{\overline{q}_0}(X_{t_k}^{a_0,b_0})^{\frac{3}{2}\overline{q}_0}\big)\Big).
	\end{align*}
	Thus, for subcritical case $b_0>0$ (respectively critical case $b_0=0$, respectively supercritical case $b_0<0$), using \eqref{boundedexp}, Lemma \ref{moment2} \textnormal{(i)} (respectively Lemma \ref{moment2} \textnormal{(ii)}, respectively Lemma \ref{moment2} \textnormal{(iii)}, \eqref{supercriticalconvergence2}) and standard calculations, we get $\widehat{\E}^{a_0,b_0}[\vert\sum_{k=0}^{n-1}\widehat{\E}^{a_0,b_0}[\zeta_{k,n}\vert \widehat{\mathcal{F}}_{t_k}]\vert]\leq C\sqrt{\Delta_n}$ (respectively $\widehat{\E}^{a_0,0}[\vert\sum_{k=0}^{n-1}\widehat{\E}^{a_0,0}[\zeta_{k,n}\vert \widehat{\mathcal{F}}_{t_k}]\vert]\leq C\sqrt{\Delta_n}$, respectively $\widehat{\E}^{a_0,b_0}[\vert\sum_{k=0}^{n-1}\widehat{\E}^{a_0,b_0}[\zeta_{k,n}\vert \widehat{\mathcal{F}}_{t_k}]\vert]\leq C(\Delta_n^{2}e^{-b_0n\Delta_n})^{\frac{1}{4}}$), which tends to zero, since all the obtained powers are smaller than $\frac{a_0}{\sigma}$ for $\overline{q}_0$ close to $1$ and $q_1$ and $n$ large enough.
	
	Next, using Jensen's inequality, \eqref{for1} of Lemma \ref{change} with $V=(X_{t_{k}}^{a_0,b_0}R_5^{a_n,b(\ell)})^2$, \eqref{for3} of Lemma \ref{deviation1} with $q=p=2$ and $q_1$ large enough, \eqref{sqrtincre} with $p\in\{1,2\}$, and Young's inequality for products with $\frac{1}{\overline{p}_0}+\frac{1}{\overline{q}_0}=1$ and $\overline{q}_0$ close to $1$, under $\frac{a_0}{\sigma}>3$ for $n$ large enough $\sum_{k=0}^{n-1}\widehat{\E}^{a_0,b_0}[\zeta_{k,n}^2\vert \widehat{\mathcal{F}}_{t_k}]$ is bounded by
	\begin{align*}
	& 2\dfrac{v^2(\varphi_{2,n}(a_0,b_0))^2}{\Delta_n^2}\sum_{k=0}^{n-1}\int_0^1(X_{t_{k}}^{a_0,b_0})^2\Big\{\widehat{\E}_{t_k,X_{t_k}^{a_0,b_0}}^{a_0,b_0}\big[(R_7^{a_0,b_0})^2\big]+\widetilde{\E}_{t_k,X_{t_k}^{a_0,b_0}}^{a_n,b(\ell)}
	[(R_5^{a_n,b(\ell)})^2]\\
	&\qquad+C_1\sqrt{\Delta_n}(\widetilde{\E}_{t_k,X_{t_k}^{a_0,b_0}}^{a_n,b(\ell)}[\vert R_5^{a_n,b(\ell)}\vert^4])^{\frac{1}{2}} e^{C_2v^2(\varphi_{2,n}(a_0,b_0))^2\Delta_nX_{t_k}^{a_0,b_0}}\Big(\vert u\vert\varphi_{1,n}(a_0,b_0)\big(\frac{1}{\sqrt{X_{t_k}^{a_0,b_0}}}\\
	& \qquad+\frac{\sqrt{\Delta_n}}{X_{t_k}^{a_0,b_0}}\vert u\vert\varphi_{1,n}(a_0,b_0)+\sqrt{\Delta_n}\vert v\vert\varphi_{2,n}(a_0,b_0)\big)\big(1+\dfrac{1}{(X_{t_k}^{a_0,b_0})^{\frac{1}{16q_1}(\frac{a_n}{\sigma}-1)}}\big)   \\
	&\qquad+\vert v\vert\varphi_{2,n}(a_0,b_0)\big(1+\sqrt{X_{t_k}^{a_0,b_0}}+\sqrt{\Delta_n}\vert v\vert\varphi_{2,n}(a_0,b_0)X_{t_k}^{a_0,b_0}\big) \Big)\Big\}d\ell\\
	&\leq Cv^2(\varphi_{2,n}(a_0,b_0))^2\Delta_n^2\sum_{k=0}^{n-1}\big(\dfrac{1}{X_{t_k}^{a_0,b_0}}+X_{t_k}^{a_0,b_0}\big)+C(\varphi_{2,n}(a_0,b_0))^{2}\Delta_n^{\frac{5}{2}}\frac{1}{\overline{p}_0}\big(\vert u\vert\varphi_{1,n}(a_0,b_0)\\
	&\qquad+\vert v\vert\varphi_{2,n}(a_0,b_0)\big)\sum_{k=0}^{n-1}e^{C_2\overline{p}_0v^2(\varphi_{2,n}(a_0,b_0))^2\Delta_nX_{t_k}^{a_0,b_0}}+C(\varphi_{2,n}(a_0,b_0))^{2}\Delta_n^{\frac{5}{2}}\frac{1}{\overline{q}_0}\sum_{k=0}^{n-1}\Big(\varphi_{1,n}(a_0,b_0)\\
	&\qquad\times\big(1+(\vert b_0\vert+\vert v\vert\varphi_{2,n}(a_0,b_0))^{\overline{q}_0}(X_{t_k}^{a_0,b_0})^{\frac{\overline{q}_0}{2}}+(\vert b_0\vert+\vert v\vert\varphi_{2,n}(a_0,b_0))^{\overline{q}_0}(\sqrt{\Delta_n}\vert v\vert\varphi_{2,n}(a_0,b_0))^{\overline{q}_0}\\
	&\qquad  \times (X_{t_k}^{a_0,b_0})^{\overline{q}_0} +\dfrac{1}{(X_{t_k}^{a_0,b_0})^{\overline{q}_0(\frac{3}{2}+\frac{1}{16q_1}(\frac{a_n}{\sigma}-1))}} +\frac{(\sqrt{\Delta_n})^{\overline{q}_0}}{(X_{t_k}^{a_0,b_0})^{\overline{q}_0(2+\frac{1}{16q_1}(\frac{a_n}{\sigma}-1))}}(\vert u\vert\varphi_{1,n}(a_0,b_0))^{\overline{q}_0}\big)\\
	&\qquad+\varphi_{2,n}(a_0,b_0)\big(1+\frac{1}{(X_{t_k}^{a_0,b_0})^{\overline{q}_0}}+(\vert b_0\vert+\vert v\vert\varphi_{2,n}(a_0,b_0))^{\overline{q}_0}(X_{t_k}^{a_0,b_0})^{\frac{3}{2}\overline{q}_0}\\
	&\qquad+(\vert b_0\vert+\vert v\vert\varphi_{2,n}(a_0,b_0))^{\overline{q}_0}(\sqrt{\Delta_n}\varphi_{2,n}(a_0,b_0))^{\overline{q}_0}(X_{t_k}^{a_0,b_0})^{2\overline{q}_0}\big) \Big).
	\end{align*}
	Thus, for subcritical case $b_0>0$ (respectively critical case $b_0=0$, respectively supercritical case $b_0<0$), using \eqref{boundedexp}, Lemma \ref{moment2} \textnormal{(i)} (respectively Lemma \ref{moment2} \textnormal{(ii)}, respectively Lemma \ref{moment2} \textnormal{(iii)}, \eqref{supercriticalconvergence2}) and standard calculations, we get $\widehat{\E}^{a_0,b_0}[\vert\sum_{k=0}^{n-1}\widehat{\E}^{a_0,b_0}[\zeta_{k,n}^2\vert \widehat{\mathcal{F}}_{t_k}]\vert]\leq C\Delta_n$ (respectively $\widehat{\E}^{a_0,0}[\vert\sum_{k=0}^{n-1}\widehat{\E}^{a_0,0}[\zeta_{k,n}^2\vert \widehat{\mathcal{F}}_{t_k}]\vert]\leq C\Delta_n$, respectively $\widehat{\E}^{a_0,b_0}[\vert\sum_{k=0}^{n-1}\widehat{\E}^{a_0,b_0}[\zeta_{k,n}^2\vert \widehat{\mathcal{F}}_{t_k}]\vert]\leq C(\Delta_n^{2}e^{-b_0n\Delta_n})^{\frac{3}{4}}$), which tends to zero, since all the obtained powers are smaller than $\frac{a_0}{\sigma}$ for $\overline{q}_0$ close to $1$ and $q_1$ and $n$ large enough. This completes the proof.	
\end{proof}

\begin{lemma}\label{lemma5} Assume condition $\frac{a_0}{\sigma}>3$. Then, as $n\to\infty$, we have
	\begin{align*}
	\sum_{k=0}^{n-1}\dfrac{v\varphi_{2,n}(a_0,b_0)}{\Delta_n}\int_0^1\big(-X_{t_{k}}^{a_0,b_0}R_6^{a_0,b_0}+\widetilde{\E}_{t_k,X_{t_{k}}^{a_0,b_0}}^{a_n,b(\ell)}[X_{t_{k}}^{a_0,b_0}R_4^{a_n,b(\ell)}\vert Y_{t_{k+1}}^{a_n,b(\ell)}=X_{t_{k+1}}^{a_0,b_0}]\big)d\ell\overset{\widehat{\P}^{a_0,b_0}}{\longrightarrow}0.
	\end{align*}
\end{lemma}
\begin{proof} 	
	We rewrite 
	\begin{align*}
	&\dfrac{v\varphi_{2,n}(a_0,b_0)}{\Delta_n}\int_0^1(-X_{t_{k}}^{a_0,b_0}R_6^{a_0,b_0}+\widetilde{\E}_{t_k,X_{t_{k}}^{a_0,b_0}}^{a_n,b(\ell)}[X_{t_{k}}^{a_0,b_0}R_4^{a_n,b(\ell)}\vert Y_{t_{k+1}}^{a_n,b(\ell)}=X_{t_{k+1}}^{a_0,b_0}])d\ell\\
	&=M_{k,n,1}+M_{k,n,2},
	\end{align*}	
	where	
	\begin{align*}
	M_{k,n,1}&=-\dfrac{v^2}{2\sigma }(\varphi_{2,n}(a_0,b_0))^2\int_0^1\ell\int_{t_k}^{t_{k+1}}(X_s^{a_0,b_0}-X_{t_k}^{a_0,b_0})dsd\ell,\\
	M_{k,n,2}&=\dfrac{v}{2\sigma}\varphi_{2,n}(a_0,b_0)\int_0^1b(\ell)\Big\{\int_{t_k}^{t_{k+1}}(X_s^{a_0,b_0}-X_{t_k}^{a_0,b_0})ds\\
	&\qquad-\widetilde{\E}_{t_k,X_{t_{k}}^{a_0,b_0}}^{a_n,b(\ell)}\big[\int_{t_k}^{t_{k+1}}(Y_s^{a_n,b(\ell)}-Y_{t_k}^{a_n,b(\ell)})ds\vert Y_{t_{k+1}}^{a_n,b(\ell)}=X_{t_{k+1}}^{a_0,b_0}\big]\Big\}d\ell.
	\end{align*}
	First, for subcritical case $b_0>0$ with $\varphi_{2,n}(a_0,b_0)=\frac{1}{\sqrt{n\Delta_n}}$ (respectively critical case $b_0=0$ with $\varphi_{2,n}(a_0,0)=\frac{1}{n\Delta_n}$, respectively supercritical case $b_0<0$ with $\varphi_{2,n}(a_0,b_0)=e^{b_0\frac{n\Delta_n}{2}}$),  applying Lemma \ref{moment} \textnormal{(i)} (respectively Lemma \ref{moment} \textnormal{(ii)}, respectively equation \eqref{c2eq1}), we obtain that $\widehat{\E}^{a_0,b_0}[\vert\sum_{k=0}^{n-1} M_{k,n,1}\vert]\leq C\sqrt{\Delta_n}$, which tends to zero.
	Thus, $\sum_{k=0}^{n-1}M_{k,n,1}\overset{\widehat{\P}^{a_0,b_0}}{\longrightarrow}0$. 

Next, we are going to show that $\sum_{k=0}^{n-1}M_{k,n,2}\overset{\widehat{\P}^{a_0,b_0}}{\longrightarrow}0$ by applying Lemma \ref{zero}. For this, using \eqref{for1} of Lemma \ref{change} with $V=V^{a_n,b(\ell)}=\int_{t_k}^{t_{k+1}}(Y_s^{a_n,b(\ell)}-Y_{t_k}^{a_n,b(\ell)})ds$, \eqref{for2} of Lemma \ref{change} with $\widehat{V}=\widehat{V}^{a_0,b_0}=\int_{t_k}^{t_{k+1}}(X_s^{a_0,b_0}-X_{t_k}^{a_0,b_0})ds$, \eqref{for3} and \eqref{for4} of Lemma \ref{deviation1} with $q=p=2$ and $q_1$ large enough, \eqref{incrementY} with $p=1$, and Young's inequality for products with $\frac{1}{\overline{p}_0}+\frac{1}{\overline{q}_0}=1$ and $\overline{q}_0$ close to $1$, under $\frac{a_0}{\sigma}>3$ for $n$ large enough $\vert\sum_{k=0}^{n-1}\widehat{\E}^{a_0,b_0}[M_{k,n,2}\vert \widehat{\mathcal{F}}_{t_k}]\vert$ is bounded by
\begin{align*}
& \sum_{k=0}^{n-1}C\dfrac{\vert v\vert}{2\sigma}\varphi_{2,n}(a_0,b_0)\int_0^1\vert b(\ell)\vert \Big\{\frac{\sqrt{\Delta_n}}{\sqrt{X_{t_k}^{a_0,b_0}}} \big\vert\int_{a_n}^{a_0}\big(\widehat{\E}_{t_k,X_{t_k}^{a_0,b_0}}^{\alpha,b_0}\big[\vert \widehat{V}^{\alpha,b_0}\vert^2\big]\big)^{\frac{1}{2}}d\alpha\big\vert \\
&\qquad+\sqrt{\Delta_n}(1+\sqrt{X_{t_k}^{a_0,b_0}})\big\vert\int_{b(\ell)}^{b_0}(\widehat{\E}_{t_k,X_{t_k}^{a_0,b_0}}^{a_n,\beta}[\vert \widehat{V}^{a_n,\beta}\vert^2])^{\frac{1}{2}}d\beta\big\vert+C_1\sqrt{\Delta_n}\big(\widetilde{\E}_{t_k,X_{t_k}^{a_0,b_0}}^{a_n,b(\ell)}[\vert V^{a_n,b(\ell)}\vert^2]\big)^{\frac{1}{2}}\\
&\qquad \times e^{C_2v^2(\varphi_{2,n}(a_0,b_0))^2\Delta_nX_{t_k}^{a_0,b_0}}\Big(\vert u\vert\varphi_{1,n}(a_0,b_0)\big(\frac{1}{\sqrt{X_{t_k}^{a_0,b_0}}}+\frac{\sqrt{\Delta_n}}{X_{t_k}^{a_0,b_0}}\vert u\vert\varphi_{1,n}(a_0,b_0)\\
&\qquad+\sqrt{\Delta_n}\vert v\vert\varphi_{2,n}(a_0,b_0)\big)\big(1+\dfrac{1}{(X_{t_k}^{a_0,b_0})^{\frac{1}{16q_1}(\frac{a_n}{\sigma}-1)}}\big) +\vert v\vert\varphi_{2,n}(a_0,b_0)\big(1+\sqrt{X_{t_k}^{a_0,b_0}} \\
& \qquad +\sqrt{\Delta_n}\vert v\vert\varphi_{2,n}(a_0,b_0)X_{t_k}^{a_0,b_0}\big) \Big)\Big\}d\ell\\
&\leq C\varphi_{2,n}(a_0,b_0)\Delta_n^{2}\sum_{k=0}^{n-1} \Big( \vert u\vert\varphi_{1,n}(a_0,b_0)\big(\frac{1}{\sqrt{X_{t_k}^{a_0,b_0}}}+ \sqrt{X_{t_k}^{a_0,b_0}}\big)+\vert v\vert\varphi_{2,n}(a_0,b_0)(1+(X_{t_k}^{a_0,b_0})^{\frac{3}{2}})\Big)\\
&\qquad+C\varphi_{2,n}(a_0,b_0)\Delta_n^{2}\frac{1}{\overline{p}_0}\big(\vert u\vert\varphi_{1,n}(a_0,b_0)+\vert v\vert\varphi_{2,n}(a_0,b_0)\big)\sum_{k=0}^{n-1}e^{C_2\overline{p}_0v^2(\varphi_{2,n}(a_0,b_0))^2\Delta_nX_{t_k}^{a_0,b_0}}\\
&\qquad+C\varphi_{2,n}(a_0,b_0)\Delta_n^{2}\frac{1}{\overline{q}_0}\sum_{k=0}^{n-1}\Big(\varphi_{1,n}(a_0,b_0)\big(1+(\vert b_0\vert+\vert v\vert\varphi_{2,n}(a_0,b_0))^{\overline{q}_0}(X_{t_k}^{a_0,b_0})^{\frac{\overline{q}_0}{2}}\\
&\qquad+(\vert b_0\vert+\vert v\vert\varphi_{2,n}(a_0,b_0))^{\overline{q}_0}(\sqrt{\Delta_n}\vert v\vert\varphi_{2,n}(a_0,b_0))^{\overline{q}_0}(X_{t_k}^{a_0,b_0})^{\overline{q}_0} +\dfrac{1}{(X_{t_k}^{a_0,b_0})^{\overline{q}_0(\frac{1}{2}+\frac{1}{16q_1}(\frac{a_n}{\sigma}-1))}}\\
&\qquad    +\frac{(\sqrt{\Delta_n})^{\overline{q}_0}}{(X_{t_k}^{a_0,b_0})^{\overline{q}_0(1+\frac{1}{16q_1}(\frac{a_n}{\sigma}-1))}}(\vert u\vert\varphi_{1,n}(a_0,b_0))^{\overline{q}_0}\big)+\varphi_{2,n}(a_0,b_0)\big(1+(\vert b_0\vert+\vert v\vert\varphi_{2,n}(a_0,b_0))^{\overline{q}_0}\\
&\qquad\times (X_{t_k}^{a_0,b_0})^{\frac{3}{2}\overline{q}_0}+(\vert b_0\vert+\vert v\vert\varphi_{2,n}(a_0,b_0))^{\overline{q}_0}(\sqrt{\Delta_n}\varphi_{2,n}(a_0,b_0))^{\overline{q}_0}(X_{t_k}^{a_0,b_0})^{2\overline{q}_0}\big) \Big),
\end{align*}
where we use $\widehat{\E}_{t_k,X_{t_k}^{a_0,b_0}}^{a_n,b(\ell)}[\widehat{V}^{a_n,b(\ell)}]-\widetilde{\E}_{t_k,X_{t_k}^{a_0,b_0}}^{a_n,b(\ell)}[V^{a_n,b(\ell)}]=0$ since $Y^{a_n,b(\ell)}$ is the independent copy of $X^{a_n,b(\ell)}$. 

Thus, for subcritical case $b_0>0$ (respectively critical case $b_0=0$, respectively supercritical case $b_0<0$), using \eqref{boundedexp}, Lemma \ref{moment2} \textnormal{(i)} (respectively Lemma \ref{moment2} \textnormal{(ii)}, respectively Lemma \ref{moment2} \textnormal{(iii)}, \eqref{supercriticalconvergence2}) and standard calculations,  $\widehat{\E}^{a_0,b_0}[\vert\sum_{k=0}^{n-1}\widehat{\E}^{a_0,b_0}[M_{k,n,2}\vert \widehat{\mathcal{F}}_{t_k}]\vert]\leq C\Delta_n$ (respectively $\widehat{\E}^{a_0,0}[\vert\sum_{k=0}^{n-1}\widehat{\E}^{a_0,0}[M_{k,n,2}\vert \widehat{\mathcal{F}}_{t_k}]\vert]\leq C\sqrt{n\Delta_n^3}$, respectively $\widehat{\E}^{a_0,b_0}[\vert\sum_{k=0}^{n-1}\widehat{\E}^{a_0,b_0}[M_{k,n,2}\vert \widehat{\mathcal{F}}_{t_k}]\vert]\leq C(\Delta_n^{2}e^{-b_0n\Delta_n})^{\frac{1}{2}}$), which tends to zero, since all the obtained powers are smaller than $\frac{a_0}{\sigma}$ for $\overline{q}_0$ close to $1$ and $q_1$ and $n$ large enough.

Next, using Jensen's inequality, \eqref{for1} of Lemma \ref{change} with $(V^{a_n,b(\ell)})^2$, \eqref{for3} of Lemma \ref{deviation1} with $q=p=2$ and $q_1$ large enough, \eqref{incrementY} with $p\in\{1,2\}$, and the Young's inequality for products with $\frac{1}{\overline{p}_0}+\frac{1}{\overline{q}_0}=1$ and $\overline{q}_0$ close to $1$, under $\frac{a_0}{\sigma}>3$  for $n$ large enough $\sum_{k=0}^{n-1}\widehat{\E}^{a_0,b_0}[M_{k,n,2}^2\vert \widehat{\mathcal{F}}_{t_k}]$ is bounded by
\begin{align*}
& Cv^2(\varphi_{2,n}(a_0,b_0))^2\sum_{k=0}^{n-1}\int_0^1b(\ell)^2\Big\{\widehat{\E}_{t_k,X_{t_k}^{a_0,b_0}}^{a_0,b_0}[(\widehat{V}^{a_0,b_0})^2]+\widetilde{\E}_{t_k,X_{t_k}^{a_0,b_0}}^{a_n,b(\ell)}[(V^{a_n,b(\ell)})^2]\\
&\qquad+C_1\sqrt{\Delta_n}\big(\widetilde{\E}_{t_k,X_{t_k}^{a_0,b_0}}^{a_n,b(\ell)}[\vert V^{a_n,b(\ell)}\vert^4]\big)^{\frac{1}{2}} e^{C_2v^2(\varphi_{2,n}(a_0,b_0))^2\Delta_nX_{t_k}^{a_0,b_0}}\Big(\vert u\vert\varphi_{1,n}(a_0,b_0)\big(\frac{1}{\sqrt{X_{t_k}^{a_0,b_0}}}\\
&\qquad+\frac{\sqrt{\Delta_n}}{X_{t_k}^{a_0,b_0}}\vert u\vert\varphi_{1,n}(a_0,b_0)+\sqrt{\Delta_n}\vert v\vert\varphi_{2,n}(a_0,b_0)\big)\big(1+\dfrac{1}{(X_{t_k}^{a_0,b_0})^{\frac{1}{16q_1}(\frac{a_n}{\sigma}-1)}}\big)   \\
&\qquad +\vert v\vert\varphi_{2,n}(a_0,b_0)\big(1+\sqrt{X_{t_k}^{a_0,b_0}}+\sqrt{\Delta_n}\vert v\vert\varphi_{2,n}(a_0,b_0)X_{t_k}^{a_0,b_0}\big) \Big)\Big\}d\ell\\
&\leq Cv^2(\varphi_{2,n}(a_0,b_0))^2\Delta_n^3\sum_{k=0}^{n-1}\big(1+(X_{t_k}^{a_0,b_0})^{2}\big)\\
&\qquad+C(\varphi_{2,n}(a_0,b_0))^{2}\Delta_n^{\frac{7}{2}}\frac{1}{\overline{p}_0}\big(\vert u\vert\varphi_{1,n}(a_0,b_0)+\vert v\vert\varphi_{2,n}(a_0,b_0)\big)\sum_{k=0}^{n-1}e^{C_2\overline{p}_0v^2(\varphi_{2,n}(a_0,b_0))^2\Delta_nX_{t_k}^{a_0,b_0}}\\
&\qquad+C(\varphi_{2,n}(a_0,b_0))^{2}\Delta_n^{\frac{7}{2}}\frac{1}{\overline{q}_0}\sum_{k=0}^{n-1}\Big(\varphi_{1,n}(a_0,b_0)\big(1+(\vert b_0\vert+\vert v\vert\varphi_{2,n}(a_0,b_0))^{\overline{q}_0}(X_{t_k}^{a_0,b_0})^{\frac{3}{2}\overline{q}_0}\\
&\qquad+(\vert b_0\vert+\vert v\vert\varphi_{2,n}(a_0,b_0))^{\overline{q}_0}(\sqrt{\Delta_n}\vert v\vert\varphi_{2,n}(a_0,b_0))^{\overline{q}_0}(X_{t_k}^{a_0,b_0})^{2\overline{q}_0}+\dfrac{1}{(X_{t_k}^{a_0,b_0})^{\overline{q}_0(\frac{1}{2}+\frac{1}{16q_1}(\frac{a_n}{\sigma}-1))}}\\
&\qquad    +\frac{(\sqrt{\Delta_n})^{\overline{q}_0}}{(X_{t_k}^{a_0,b_0})^{\overline{q}_0(1+\frac{1}{16q_1}(\frac{a_n}{\sigma}-1))}}(\vert u\vert\varphi_{1,n}(a_0,b_0))^{\overline{q}_0}\big) +\varphi_{2,n}(a_0,b_0)\big(1+(\vert b_0\vert+\vert v\vert\varphi_{2,n}(a_0,b_0))^{\overline{q}_0}\\
&\qquad\times(X_{t_k}^{a_0,b_0})^{\frac{5}{2}\overline{q}_0}+(\vert b_0\vert+\vert v\vert\varphi_{2,n}(a_0,b_0))^{\overline{q}_0}(\sqrt{\Delta_n}\varphi_{2,n}(a_0,b_0))^{\overline{q}_0}(X_{t_k}^{a_0,b_0})^{3\overline{q}_0}\big) \Big).
\end{align*}
Thus, for subcritical case $b_0>0$ (respectively critical case $b_0=0$, respectively supercritical case $b_0<0$), using \eqref{boundedexp}, Lemma \ref{moment2} \textnormal{(i)} (respectively Lemma \ref{moment2} \textnormal{(ii)}, respectively Lemma \ref{moment2} \textnormal{(iii)}, \eqref{supercriticalconvergence2}) and standard calculations,  $\widehat{\E}^{a_0,b_0}[\vert\sum_{k=0}^{n-1}\widehat{\E}^{a_0,b_0}[M_{k,n,2}^2\vert \widehat{\mathcal{F}}_{t_k}]\vert]\leq C\Delta_n^2$ (respectively $\widehat{\E}^{a_0,0}[\vert\sum_{k=0}^{n-1}\widehat{\E}^{a_0,0}[M_{k,n,2}^2\vert \widehat{\mathcal{F}}_{t_k}]\vert]\leq Cn\Delta_n^3$ and respectively $\widehat{\E}^{a_0,b_0}[\vert\sum_{k=0}^{n-1}\widehat{\E}^{a_0,b_0}[M_{k,n,2}^2\vert \widehat{\mathcal{F}}_{t_k}]\vert]\leq C\Delta_n^{2}e^{-b_0n\Delta_n}$), which tends to zero, since all the obtained powers are smaller than $\frac{a_0}{\sigma}$ for $\overline{q}_0$ close to $1$ and $q_1$ and $n$ large enough. This completes the proof.
\end{proof}

\section{Appendix A: Proof of technical results}
\label{prooftechnical}
\subsection{Proof of Lemma \ref{estimates}}
\begin{proof}
	First, recall that the estimates \eqref{e1} and \eqref{e2} can be found in \cite[Lemma 2.1 and 3.1]{BD07} or \cite[Lemma 3.1]{DM11}. Next, we treat \eqref{e4}.
	
	For all $a, a_1\in\R^{\ast}_+$, $b, b_1\in\R$, $x\in\R^{\ast}_+$ and $k\in\{0,...,n-1\}$, by \cite[Lemma 3.1]{BP16}, for any $a>\sigma$ the probability measures $\widetilde{\P}_{t_k,x}^{a_1,b_1}$ and $\widetilde{\P}_{t_k,x}^{a,b}$ are absolutely continuous w.r.t. each other and its Radon-Nikodym derivative is given by
	\begin{align*}
	&\dfrac{d\widetilde{\P}_{t_k,x}^{a_1,b_1}}{d\widetilde{\P}_{t_k,x}^{a,b}}\big((Y_s^{a,b}(t_k,x))_{s\in I_k}\big)=\exp\Big\{\int_{t_k}^{t_{k+1}}\frac{a_1-a-(b_1-b)Y_s^{a,b}(t_k,x)}{2\sigma Y_s^{a,b}(t_k,x)}dY_s^{a,b}(t_k,x)\\
	&\qquad-\frac{1}{2}\int_{t_k}^{t_{k+1}}\frac{(a_1-b_1Y_s^{a,b}(t_k,x))^2-(a-bY_s^{a,b}(t_k,x))^2}{2\sigma Y_s^{a,b}(t_k,x)}ds\Big\}\\
	&=\exp\Big\{\int_{t_k}^{t_{k+1}}\frac{a_1-a-(b_1-b)Y_s^{a,b}(t_k,x)}{\sqrt{2\sigma Y_s^{a,b}(t_k,x)}}dW_s-\frac{1}{2}\int_{t_k}^{t_{k+1}}\frac{(a_1-a-(b_1-b)Y_s^{a,b}(t_k,x))^2}{2\sigma Y_s^{a,b}(t_k,x)}ds\Big\}.
	\end{align*}	
	Using \eqref{dxe} and the change of measures, we have that for any $p\in\R$,
	\begin{align}
	&\widetilde{\E}_{t_k,x}^{a,b}\big[\vert \partial_xY_t^{a,b}(t_k,x)\vert^p\big]=e^{-bp(t-t_k)}\widetilde{\E}_{t_k,x}^{a,b}\Big[\exp{\Big\{\frac{-p\sigma}{4}\int_{t_k}^t\frac{du}{Y_u^{a,b}(t_k,x)}+p\sqrt{\frac{\sigma}{2}}\int_{t_k}^t\dfrac{dW_u}{\sqrt{Y_u^{a,b}(t_k,x)}}\Big\}}\Big]\notag\\
	&=e^{-bp(t-t_k)}\widetilde{\E}_{t_k,x}^{a,b}\Big[\exp{\Big\{\dfrac{p(p-1)\sigma}{4}\int_{t_k}^t\dfrac{du}{Y_u^{a,b}(t_k,x)}\Big\}}\notag	\\
	&\qquad\times\exp{\Big\{p\sqrt{\dfrac{\sigma}{2}}\int_{t_k}^t\dfrac{dW_u}{\sqrt{Y_u^{a,b}(t_k,x)}}-\dfrac{p^2\sigma}{4}\int_{t_k}^t\dfrac{du}{Y_u^{a,b}(t_k,x)}\Big\}}\Big]\notag\\
	&=e^{-bp(t-t_k)}\widetilde{\E}_{t_k,x}^{a,b}\Big[\exp{\Big\{\dfrac{p(p-1)\sigma}{4}\int_{t_k}^t\dfrac{du}{Y_u^{a,b}(t_k,x)}\Big\}}\dfrac{d\widetilde{\P}_{t_k,x}^{a+p\sigma,b}}{d\widetilde{\P}_{t_k,x}^{a,b}}\big((Y_s^{a,b}(t_k,x))_{s\in [t_k,t]}\big)\Big]\notag\\
	&=e^{-bp(t-t_k)}\widetilde{\E}_{t_k,x}^{a+p\sigma,b}\Big[\exp{\Big\{\dfrac{p(p-1)\sigma}{4}\int_{t_k}^t\dfrac{du}{Y_u^{a+p\sigma,b}(t_k,x)}\Big\}}\Big].\label{measureq}
	\end{align}
	Then, applying Lemma \ref{expmoment} to the probability measure $\widetilde{\P}_{t_k,x}^{a+p\sigma,b}$ with $\mu=\frac{p(p-1)\sigma}{4}$, we get that for any $p\geq -\frac{(\frac{a}{\sigma}-1)^2}{2(\frac{a}{\sigma}-\frac{1}{2})}$ and $t\in[t_k,t_{k+1}]$, 
	\begin{align*}
	\widetilde{\E}_{t_k,x}^{a+p\sigma,b}\Big[\exp{\Big\{\dfrac{p(p-1)\sigma}{4}\int_{t_k}^t\dfrac{du}{Y_u^{a+p\sigma,b}(t_k,x)}\Big\}}\Big]\leq C_p\big(1+\dfrac{1}{x^{\frac{\frac{a}{\sigma}-1+p}{2}}}\big).
	\end{align*}
	This, together with \eqref{measureq}, gives the desired estimate \eqref{e4}.	
\end{proof}

\subsection{Proof of Lemma \ref{Malliderivable}}
\begin{proof}
	\textnormal{(i)} For $t_k\leq s\leq t \leq t_{k+1}$, using \eqref{expression2}, we have
	\begin{align*}
	&\partial_xY_t^{a,b}(t_k,x)\Big(\sqrt{\dfrac{\sigma}{2}}\dfrac{1}{\sqrt{Y_s^{a,b}(t_k,x)}}+\dfrac{\sigma}{4}\int_{s}^t\dfrac{1}{(Y_u^{a,b}(t_k,x))^2}D_sY_u^{a,b}(t_k,x)du\\
	&-\dfrac{1}{2}\sqrt{\dfrac{\sigma}{2}}\int_{s}^t\dfrac{1}{(Y_u^{a,b}(t_k,x))^{\frac{3}{2}}}D_sY_u^{a,b}(t_k,x)dW_u\Big){\bf 1}_{[t_k,t]}(s)=\partial_xY_t^{a,b}(t_k,x)\Big(\sqrt{\dfrac{\sigma}{2}}\dfrac{1}{\sqrt{Y_s^{a,b}(t_k,x)}}\\
	&+\dfrac{\sigma}{4}\int_{s}^t\dfrac{\sqrt{2\sigma}}{(Y_u^{a,b}(t_k,x))^{\frac{3}{2}}}\exp\Big\{\int_s^u\Big(-\dfrac{b}{2}-\big(\dfrac{a}{2}-\dfrac{\sigma}{4}\big)\dfrac{1}{Y_{\xi}^{a,b}(t_k,x)}\Big)d\xi\Big\}du\\
	&-\dfrac{\sigma}{2}\int_{s}^t\dfrac{1}{Y_u^{a,b}(t_k,x)}\exp\Big\{\int_s^u\Big(-\dfrac{b}{2}-\big(\dfrac{a}{2}-\dfrac{\sigma}{4}\big)\dfrac{1}{Y_{\xi}^{a,b}(t_k,x)}\Big)d\xi\Big\}dW_u\Big){\bf 1}_{[t_k,t]}(s).
	\end{align*}
	We will show that under condition $\frac{a}{\sigma}>4$, this expression is contained in $L^2(\widetilde{\Omega}\times [t_k,t_{k+1}])$. In fact, using the fact that the exponential terms can be bounded by a positive constant since $a>\sigma$, BDG's and H\"older's inequalities with $\frac{1}{p}+\frac{1}{q}=1$, \eqref{e2} and \eqref{e4}, we get that
	\begin{align*}
	&\widetilde{\E}_{t_k,x}^{a,b}\Big[\Big\vert \partial_xY_t^{a,b}(t_k,x)\Big(\sqrt{\dfrac{\sigma}{2}}\dfrac{1}{\sqrt{Y_s^{a,b}(t_k,x)}}+\dfrac{\sigma}{4}\int_{s}^t\dfrac{1}{(Y_u^{a,b}(t_k,x))^2}D_sY_u^{a,b}(t_k,x)du\\
	&\qquad-\dfrac{1}{2}\sqrt{\dfrac{\sigma}{2}}\int_{s}^t\dfrac{1}{(Y_u^{a,b}(t_k,x))^{\frac{3}{2}}}D_sY_u^{a,b}(t_k,x)dW_u\Big){\bf 1}_{[t_k,t]}(s)\Big\vert^2\Big]\\
	&\leq C\widetilde{\E}_{t_k,x}^{a,b}\Big[\Big\vert \frac{\partial_xY_t^{a,b}(t_k,x)}{\sqrt{Y_s^{a,b}(t_k,x)}}\Big\vert^2\Big]+C\widetilde{\E}_{t_k,x}^{a,b}\Big[\Big\vert \partial_xY_t^{a,b}(t_k,x)\int_{s}^t\frac{du}{(Y_u^{a,b}(t_k,x))^{\frac{3}{2}}}\Big\vert^2\Big]\\
	&\qquad+C\widetilde{\E}_{t_k,x}^{a,b}\Big[\Big\vert (\partial_xY_t^{a,b}(t_k,x))^2\int_{s}^t\frac{1}{(Y_u^{a,b}(t_k,x))^2}du\Big\vert\Big]\\
	&\leq C\Big(\widetilde{\E}_{t_k,x}^{a,b}\Big[\dfrac{1}{\vert Y_s^{a,b}(t_k,x)\vert^p}\Big]\Big)^{\frac{1}{p}}\Big(\widetilde{\E}_{t_k,x}^{a,b}\Big[\big\vert \partial_xY_t^{a,b}(t_k,x)\big\vert^{2q}\Big]\Big)^{\frac{1}{q}}\\
	&\qquad+C\Big(\widetilde{\E}_{t_k,x}^{a,b}\Big[\Big\vert \int_{s}^t\dfrac{du}{(Y_u^{a,b}(t_k,x))^{\frac{3}{2}}}\Big\vert^{2p}\Big]\Big)^{\frac{1}{p}}\Big(\widetilde{\E}_{t_k,x}^{a,b}\Big[\big\vert \partial_xY_t^{a,b}(t_k,x)\big\vert^{2q}\Big]\Big)^{\frac{1}{q}}\\
	&\qquad+C\Big(\widetilde{\E}_{t_k,x}^{a,b}\Big[\Big\vert \int_{s}^t\dfrac{du}{(Y_u^{a,b}(t_k,x))^2}\Big\vert^{p}\Big]\Big)^{\frac{1}{p}}\Big(\widetilde{\E}_{t_k,x}^{a,b}\Big[\big\vert \partial_xY_t^{a,b}(t_k,x)\big\vert^{2q}\Big]\Big)^{\frac{1}{q}}\\
	&\leq \dfrac{C}{x}\Big(1+\dfrac{1}{x^{\frac{\frac{a}{\sigma}-1}{2q}+1}}\Big)+C\Big(\Delta_n^{2p-1} \int_{s}^t\widetilde{\E}_{t_k,x}^{a,b}\Big[\dfrac{1}{\vert Y_u^{a,b}(t_k,x)\vert^{3p}}\Big]du\Big)^{\frac{1}{p}}\Big(1+\dfrac{1}{x^{\frac{\frac{a}{\sigma}-1}{2q}+1}}\Big)\\
	&\qquad+C\Big(\Delta_n^{p-1} \int_{s}^t\widetilde{\E}_{t_k,x}^{a,b}\Big[\dfrac{1}{\vert Y_u^{a,b}(t_k,x)\vert^{2p}}\Big]du\Big)^{\frac{1}{p}}\Big(1+\dfrac{1}{x^{\frac{\frac{a}{\sigma}-1}{2q}+1}}\Big)\leq \dfrac{C}{x}\Big(1+\dfrac{1}{x^{\frac{\frac{a}{\sigma}-1}{2q}+1}}\Big)\\
	&\qquad+C\Delta_n^2\dfrac{1}{x^3}\Big(1+\dfrac{1}{x^{\frac{\frac{a}{\sigma}-1}{2q}+1}}\Big)+C\Delta_n\dfrac{1}{x^2}\Big(1+\dfrac{1}{x^{\frac{\frac{a}{\sigma}-1}{2q}+1}}\Big)\leq C\Big(1+\dfrac{1}{x^{\frac{\frac{a}{\sigma}-1}{2q}+1}}\Big)\Big(\dfrac{1}{x}+\dfrac{1}{x^3}+\dfrac{1}{x^2}\Big),
	\end{align*}
	where $p>1$ and $p$ is close to $1$. Indeed, the condition required here is $3p<\frac{a}{\sigma}-1$ and $\frac{a}{\sigma}>4$, we only need to choose $p\in(1,\frac{1}{3}(\frac{a}{\sigma}-1))$. Thus, under $\frac{a}{\sigma}>4$,	
	\begin{align*}
	&\widetilde{\E}_{t_k,x}^{a,b}\Big[\int_{t_k}^{t_{k+1}}\Big\vert \partial_xY_t^{a,b}(t_k,x)\Big(\sqrt{\dfrac{\sigma}{2}}\dfrac{1}{\sqrt{Y_s^{a,b}(t_k,x)}}+\dfrac{\sigma}{4}\int_{s}^t\dfrac{1}{(Y_u^{a,b}(t_k,x))^2}D_sY_u^{a,b}(t_k,x)du\\
	&-\dfrac{1}{2}\sqrt{\dfrac{\sigma}{2}}\int_{s}^t\dfrac{1}{(Y_u^{a,b}(t_k,x))^{\frac{3}{2}}}D_sY_u^{a,b}(t_k,x)dW_u\Big)\Big\vert^2ds\Big]\\
	&\leq C\Delta_n\Big(1+\dfrac{1}{x^{\frac{\frac{a}{\sigma}-1}{2q}+1}}\Big)\Big(\dfrac{1}{x}+\dfrac{1}{x^3}+\dfrac{1}{x^2}\Big)<+\infty.
	\end{align*}	
	This combined with \eqref{e4} with $p=2$ guarantees that $\partial_xY_t^{a,b}(t_k,x)\in \mathbb{D}^{1,2}$ under $\frac{a}{\sigma}>4$ thanks to Theorem \ref{existenceMallideri}, and furthermore, its Malliavin derivative is given by \eqref{dmpx}. 
	
	Next, to prove \textnormal{(ii)} the following lemma will be useful.
	\begin{lemma}\label{Malliflow} Let $p>1$. Assume condition $\frac{a}{\sigma}>\frac{7p+4+\sqrt{p(49p+16)}}{4}$ for \eqref{dmif1} and \eqref{dmif2}, and condition $\frac{a}{\sigma}>p+1+\sqrt{p(p+1)}$ for \eqref{dmif3}. Then for any $(a,b)\in\Theta\times\Sigma$, $k \in \{0,...,n-1\}$, $t_k\leq s\leq t\leq t_{k+1}$, and $x\in\R^{\ast}_+$, there exists a constant $C>0$ which does not depend on $x$ such that 
		\begin{align}
		&\widetilde{\E}_{t_k,x}^{a,b}\Big[\Big\vert \frac{D_s(\partial_xY_t^{a,b}(t_k,x))}{(\partial_xY_t^{a,b}(t_k,x))^2}\Big\vert^p\Big]\leq C\Big(1+\dfrac{1}{x^{\frac{\frac{a}{\sigma}-1}{2p_0}-\frac{p}{2}}}\Big)\Big(\dfrac{1}{x^{\frac{p}{2}}}+\dfrac{1}{x^{\frac{3}{2}p}}+\dfrac{1}{x^{p}}\Big),\label{dmif1}\\
		&\widetilde{\E}_{t_k,x}^{a,b}\Big[\Big\vert \dfrac{D_sY_{t}^{a,b}(t_k,x)}{\partial_xY_{t}^{a,b}(t_k,x)}\Big\vert^p\Big]\leq C\Big(1+\dfrac{1}{x^{\frac{\frac{a}{\sigma}-1}{2p_2}-\frac{p}{2}}}\Big)\left(1+x^{\frac{p}{2}}\right),\label{dmif3}\\
		&\widetilde{\E}_{t_k,x}^{a,b}\Big[\Big\vert Y_{t}^{a,b}(t_k,x)D_s\Big(\dfrac{1}{\partial_xY_{t}^{a,b}(t_k,x)}\Big)\Big\vert^p\Big]\leq C\Big(1+\frac{1}{x^{\frac{\frac{a}{\sigma}-1}{2p_2p_0}-\frac{p}{2}}}\Big)(1+x^p)\Big(\frac{1}{x^{\frac{p}{2}}}+\frac{1}{x^{\frac{3}{2}p}}+\frac{1}{x^{p}}\Big),\label{dmif2}
		\end{align}
		where $p_0=\frac{21p+8+\sqrt{9p(49p+16)}}{8(3p+1)}$,  $p_2>1$ and $p_2$ is close to $1$.
	\end{lemma}
	\textnormal{(ii)} In the same way 
	we show that the expression $\frac{-1}{(\partial_xY_t^{a,b}(t_k,x))^2}D(\partial_xY_t^{a,b}(t_k,x))$ is contained in $L^2(\widetilde{\Omega}\times [t_k,t_{k+1}])$ under condition $\frac{a}{\sigma}>\frac{9+\sqrt{57}}{2}$. Indeed, applying \eqref{dmif1} of Lemma \ref{Malliflow} with $p=2$, condition $\frac{a}{\sigma}>\frac{9+\sqrt{57}}{2}$ ensures that
	\begin{align*}
	\widetilde{\E}_{t_k,x}^{a,b}\Big[\Big\vert \frac{D_s(\partial_xY_t^{a,b}(t_k,x))}{(\partial_xY_t^{a,b}(t_k,x))^2}\Big\vert^2\Big]&\leq C\Big(1+\dfrac{1}{x^{\frac{\frac{a}{\sigma}-1}{2p_0}-1}}\Big)\Big(\dfrac{1}{x}+\dfrac{1}{x^{3}}+\dfrac{1}{x^{2}}\Big),
	\end{align*}
	where $p_0=\frac{25+\sqrt{513}}{28}$. This implies that
	\begin{align*}
	&\widetilde{\E}_{t_k,x}^{a,b}\Big[\int_{t_k}^{t_{k+1}}\Big\vert \frac{D_s(\partial_xY_t^{a,b}(t_k,x))}{(\partial_xY_t^{a,b}(t_k,x))^2}\Big\vert^2ds\Big]\leq C\Delta_n\Big(1+\dfrac{1}{x^{\frac{\frac{a}{\sigma}-1}{2p_0}-1}}\Big)\Big(\dfrac{1}{x}+\dfrac{1}{x^{3}}+\dfrac{1}{x^{2}}\Big)<+\infty.
	\end{align*}
	Moreover, applying Lemma \ref{estimates} with $p=-2$, \eqref{e4} is satisfied under condition $\frac{a}{\sigma}>3+\sqrt{6}$. This guarantees that  $(\partial_xY_t^{a,b}(t_k,x))^{-1}\in \mathbb{D}^{1,2}$ under condition $\frac{a}{\sigma}>\frac{9+\sqrt{57}}{2}$ thanks to Lemma \ref{existenceMallideri} and, furthermore, its Malliavin derivative is given by \eqref{Mallinveflow1}. 
	
	Finally, in the same way 
	we show that under condition $\frac{a}{\sigma}>\frac{9+\sqrt{57}}{2}$,
	$$
	\dfrac{1}{\partial_xY_{t}^{a,b}(t_k,x)}DY_{t}^{a,b}(t_k,x)+Y_{t}^{a,b}(t_k,x)D\Big(\dfrac{1}{\partial_xY_{t}^{a,b}(t_k,x)}\Big)
	$$
	is contained in $L^2(\widetilde{\Omega}\times [t_k,t_{k+1}])$. Indeed, applying \eqref{dmif3} and \eqref{dmif2} of Lemma \ref{Malliflow} with $p=2$, condition $\frac{a}{\sigma}>\frac{9+\sqrt{57}}{2}$ ensures that
	\begin{align*}
	&\widetilde{\E}_{t_k,x}^{a,b}\Big[\Big\vert \dfrac{1}{\partial_xY_{t}^{a,b}(t_k,x)}D_sY_{t}^{a,b}(t_k,x)+Y_{t}^{a,b}(t_k,x)D_s\Big(\dfrac{1}{\partial_xY_{t}^{a,b}(t_k,x)}\Big)\Big\vert^2\Big]\\
	&\leq 2\widetilde{\E}_{t_k,x}^{a,b}\Big[\Big\vert \dfrac{1}{\partial_xY_{t}^{a,b}(t_k,x)}D_sY_{t}^{a,b}(t_k,x)\Big\vert^2\Big]+2\widetilde{\E}_{t_k,x}^{a,b}\Big[\Big\vert Y_{t}^{a,b}(t_k,x)D_s\Big(\dfrac{1}{\partial_xY_{t}^{a,b}(t_k,x)}\Big)\Big\vert^2\Big]\\
	&\leq C\Big(1+\dfrac{1}{x^{\frac{\frac{a}{\sigma}-1}{2p_2}-1}}\Big)(1+x)+C\Big(1+\frac{1}{x^{\frac{\frac{a}{\sigma}-1}{2p_2p_0}-1}}\Big)(1+x^2)\Big(\frac{1}{x}+\frac{1}{x^{3}}+\frac{1}{x^{2}}\Big),
	\end{align*}
	where $p_0=\frac{25+\sqrt{513}}{28}$, $p_2>1$ and $p_2$ is close to $1$. This implies that under condition $\frac{a}{\sigma}>\frac{9+\sqrt{57}}{2}$,
	\begin{align*}
	&\widetilde{\E}_{t_k,x}^{a,b}\Big[\int_{t_k}^{t_{k+1}}\Big\vert \dfrac{1}{\partial_xY_{t}^{a,b}(t_k,x)}D_sY_{t}^{a,b}(t_k,x)+Y_{t}^{a,b}(t_k,x)D_s\Big(\dfrac{1}{\partial_xY_{t}^{a,b}(t_k,x)}\Big)\Big\vert^2ds\Big]\\
	&\leq C\Delta_n\Big(1+\dfrac{1}{x^{\frac{\frac{a}{\sigma}-1}{2p_2}-1}}\Big)(1+x)+C\Delta_n\Big(1+\frac{1}{x^{\frac{\frac{a}{\sigma}-1}{2p_2p_0}-1}}\Big)(1+x^2)\Big(\frac{1}{x}+\frac{1}{x^{3}}+\frac{1}{x^{2}}\Big)<+\infty.
	\end{align*}
	On the other hand, using H\"older's inequality with $\frac{1}{p_1}+\frac{1}{q_1}=1$, we get
	\begin{align*} \widetilde{\E}_{t_k,x}^{a,b}\Big[\Big\vert\dfrac{Y_{t}^{a,b}(t_k,x)}{\partial_xY_{t}^{a,b}(t_k,x)}\Big\vert^2\Big]&\leq \Big(\widetilde{\E}_{t_k,x}^{a,b}\Big[\dfrac{1}{\vert\partial_xY_{t}^{a,b}(t_k,x)\vert^{2p_1}}\Big]\Big)^{\frac{1}{p_1}}\Big(\widetilde{\E}_{t_k,x}^{a,b}\Big[\vert Y_{t}^{a,b}(t_k,x)\vert^{2q_1}\Big]\Big)^{\frac{1}{q_1}}\\
	&\leq C\Big(1+\dfrac{1}{x^{\frac{\frac{a}{\sigma}-1}{2p_1}-1}}\Big)(1+x^2)<+\infty,
\end{align*}
where $p_1>1$ and $p_1$ is close to $1$. Indeed, the condition required here is $-2p_1\geq -\frac{(\frac{a}{\sigma}-1)^2}{2(\frac{a}{\sigma}-\frac{1}{2})}$ and $\frac{a}{\sigma}>3+\sqrt{6}$, thus we only need to choose $p_1\in(1,\frac{(\frac{a}{\sigma}-1)^2}{4(\frac{a}{\sigma}-\frac{1}{2})})$. Hence $\frac{Y_{t}^{a,b}(t_k,x)}{\partial_xY_{t}^{a,b}(t_k,x)}\in L^2(\widetilde{\Omega})$ under $\frac{a}{\sigma}>3+\sqrt{6}$. This guarantees thanks to Theorem \ref{existenceMallideri} that  $\frac{Y_{t}^{a,b}(t_k,x)}{\partial_xY_{t}^{a,b}(t_k,x)}\in \mathbb{D}^{1,2}$ under condition $\frac{a}{\sigma}>\frac{9+\sqrt{57}}{2}$ and, furthermore, its Malliavin derivative is given by \eqref{Mallinveflow2}. Thus, the result follows.
\end{proof}

\subsection{Proof of Lemma \ref{Malliflow}}
\begin{proof}
	First, we treat \eqref{dmif1}. For $t_k\leq s\leq t \leq t_{k+1}$, from \eqref{dmpx} and \eqref{expression2}, under $\frac{a}{\sigma}>4$ we have 
	\begin{align}\label{deriinver}
	&\frac{-1}{(\partial_xY_t^{a,b}(t_k,x))^2}D_s(\partial_xY_t^{a,b}(t_k,x))=-\frac{1}{\partial_xY_t^{a,b}(t_k,x)}\Big\{\sqrt{\frac{\sigma}{2}}\frac{1}{\sqrt{Y_s^{a,b}(t_k,x)}}\notag\\
	&\qquad+\dfrac{\sigma}{4}\int_{s}^{t}\dfrac{\sqrt{2\sigma}}{(Y_u^{a,b}(t_k,x))^{\frac{3}{2}}}\exp\Big\{\int_s^u\Big(-\dfrac{b}{2}-\big(\dfrac{a}{2}-\dfrac{\sigma}{4}\big)\dfrac{1}{Y_{\xi}^{a,b}(t_k,x)}\Big)d\xi\Big\}du\notag\\
	&\qquad-\dfrac{\sigma}{2}\int_{s}^{t}\dfrac{1}{Y_u^{a,b}(t_k,x)}\exp\Big\{\int_s^u\Big(-\dfrac{b}{2}-\big(\dfrac{a}{2}-\dfrac{\sigma}{4}\big)\dfrac{1}{Y_{\xi}^{a,b}(t_k,x)}\Big)d\xi\Big\}dW_u\Big\}.
	\end{align}	
	Then, using the fact that the exponential terms can be bounded by a positive constant since $a>\sigma$, BDG's and H\"older's inequalities with $\frac{1}{p_4}+\frac{1}{q_4}=1$, \eqref{e2} and \eqref{e4}, we get that
	\begin{align*}
	&\widetilde{\E}_{t_k,x}^{a,b}\Big[\Big\vert \frac{D_s(\partial_xY_t^{a,b}(t_k,x))}{(\partial_xY_t^{a,b}(t_k,x))^2}\Big\vert^{p}\Big]\leq C\widetilde{\E}_{t_k,x}^{a,b}\Big[\Big\vert \frac{1}{\partial_xY_t^{a,b}(t_k,x)}\dfrac{1}{\sqrt{Y_s^{a,b}(t_k,x)}}\Big\vert^{p}\Big]\\
	&+C\widetilde{\E}_{t_k,x}^{a,b}\Big[\Big\vert \frac{1}{\partial_xY_t^{a,b}(t_k,x)}\int_{s}^{t}\dfrac{du}{(Y_u^{a,b}(t_k,x))^{\frac{3}{2}}}\Big\vert^{p}\Big]+C\widetilde{\E}_{t_k,x}^{a,b}\Big[\Big\vert \frac{1}{(\partial_xY_t^{a,b}(t_k,x))^2}\int_{s}^{t}\dfrac{du}{(Y_u^{a,b}(t_k,x))^2}\Big\vert^{\frac{p}{2}}\Big]\\
	&\leq C\Big(\widetilde{\E}_{t_k,x}^{a,b}\Big[\frac{1}{\vert\partial_xY_t^{a,b}(t_k,x)\vert^{pp_4}}\Big]\Big)^{\frac{1}{p_4}}\Big(\widetilde{\E}_{t_k,x}^{a,b}\Big[\dfrac{1}{\vert Y_s^{a,b}(t_k,x)\vert^{\frac{1}{2}pq_4}}\Big]\Big)^{\frac{1}{q_4}}\\
	&\qquad+C\Big(\widetilde{\E}_{t_k,x}^{a,b}\Big[\frac{1}{\vert\partial_xY_t^{a,b}(t_k,x)\vert^{pp_4}}\Big]\Big)^{\frac{1}{p_4}}\Big(\widetilde{\E}_{t_k,x}^{a,b}\Big[\Big\vert\int_{s}^{t}\dfrac{du}{(Y_u^{a,b}(t_k,x))^{\frac{3}{2}}}\Big\vert^{pq_4}\Big]\Big)^{\frac{1}{q_4}}\\
	&\qquad+C\Big(\widetilde{\E}_{t_k,x}^{a,b}\Big[\frac{1}{\vert\partial_xY_t^{a,b}(t_k,x)\vert^{pp_4}}\Big]\Big)^{\frac{1}{p_4}}\Big(\widetilde{\E}_{t_k,x}^{a,b}\Big[\Big\vert\int_{s}^{t}\dfrac{du}{(Y_u^{a,b}(t_k,x))^2}\Big\vert^{\frac{1}{2}pq_4}\Big]\Big)^{\frac{1}{q_4}}\\
	&\leq C\Big(1+\dfrac{1}{x^{\frac{\frac{a}{\sigma}-1}{2p_4}-\frac{p}{2}}}\Big)\dfrac{1}{x^{\frac{p}{2}}}+C\Big(1+\dfrac{1}{x^{\frac{\frac{a}{\sigma}-1}{2p_4}-\frac{p}{2}}}\Big)\Big(\Delta_n^{pq_4-1}\int_{s}^{t}\widetilde{\E}_{t_k,x}^{a,b}\Big[\dfrac{1}{\vert Y_u^{a,b}(t_k,x)\vert^{\frac{3}{2}pq_4}}\Big]du\Big)^{\frac{1}{q_4}}\\
	&\qquad+C\Big(1+\dfrac{1}{x^{\frac{\frac{a}{\sigma}-1}{2p_4}-\frac{p}{2}}}\Big)\Big(\Delta_n^{\frac{1}{2}pq_4-1}\int_{s}^{t}\widetilde{\E}_{t_k,x}^{a,b}\Big[\dfrac{1}{\vert Y_u^{a,b}(t_k,x)\vert^{pq_4}}\Big]du\Big)^{\frac{1}{q_4}}\\
	&\leq C\Big(1+\dfrac{1}{x^{\frac{\frac{a}{\sigma}-1}{2p_4}-\frac{p}{2}}}\Big)\dfrac{1}{x^{\frac{p}{2}}}+C\Delta_n^{p}\Big(1+\dfrac{1}{x^{\frac{\frac{a}{\sigma}-1}{2p_4}-\frac{p}{2}}}\Big)\dfrac{1}{x^{\frac{3}{2}p}}+C\Delta_n^{\frac{p}{2}}\Big(1+\dfrac{1}{x^{\frac{\frac{a}{\sigma}-1}{2p_4}-\frac{p}{2}}}\Big)\dfrac{1}{x^{p}}\\
	&\leq C\Big(1+\dfrac{1}{x^{\frac{\frac{a}{\sigma}-1}{2p_4}-\frac{p}{2}}}\Big)\Big(\dfrac{1}{x^{\frac{p}{2}}}+\dfrac{1}{x^{\frac{3}{2}p}}+\dfrac{1}{x^{p}}\Big).
	\end{align*}
	Here, conditions are required as follows
	\begin{align*}
	-pp_4\geq -\frac{(\frac{a}{\sigma}-1)^2}{2(\frac{a}{\sigma}-\frac{1}{2})},\;
	\frac{3}{2}pq_4<\dfrac{a}{\sigma}-1.
	\end{align*}
	This implies that
	\begin{align*}
	\frac{a}{\sigma}\geq pp_4+\sqrt{pp_4\left(pp_4+1\right)}+1,\;
	\frac{a}{\sigma}>\frac{3}{2}pq_4+1=\frac{3}{2}\frac{pp_4}{p_4-1}+1.
	\end{align*}
	Hence, the optimal choice for $p_4$ is solution to
	$$
	pp_4+\sqrt{pp_4\left(pp_4+1\right)}=\frac{3}{2}\frac{pp_4}{p_4-1}.
	$$	
	The unique positive solution is given by $p_4=\frac{21p+8+\sqrt{9p(49p+16)}}{8(3p+1)}>1$. Thus, \eqref{dmif1} is valid under condition $\frac{a}{\sigma}>\frac{7p+4+\sqrt{p(49p+16)}}{4}$.
	
	Next, we treat \eqref{dmif3}. For $t_k\leq s\leq t \leq t_{k+1}$, using \eqref{expression2}, under condition $a\geq \sigma$ we have
	\begin{align*}
	\dfrac{1}{\partial_xY_{t}^{a,b}(t_k,x)}D_sY_{t}^{a,b}(t_k,x)=\frac{\sqrt{2\sigma Y_t^{a,b}(t_k,x)}}{\partial_xY_{t}^{a,b}(t_k,x)}\exp\Big\{\int_s^t\Big(-\dfrac{b}{2}-\big(\dfrac{a}{2}-\dfrac{\sigma}{4}\big)\dfrac{1}{Y_u^{a,b}(t_k,x)}\Big)du\Big\}
	\end{align*}	
	Then, using the fact that the exponential terms can be bounded by a positive constant since $a>\sigma$, H\"older's inequality with $\frac{1}{p_2}+\frac{1}{q_2}=1$, \eqref{e1} and \eqref{e4}, we get that
	\begin{align*}
	&\widetilde{\E}_{t_k,x}^{a,b}\Big[\Big\vert \dfrac{1}{\partial_xY_{t}^{a,b}(t_k,x)}D_sY_{t}^{a,b}(t_k,x)\Big\vert^p\Big]\leq C \widetilde{\E}_{t_k,x}^{a,b}\Big[\Big\vert \dfrac{\sqrt{Y_t^{a,b}(t_k,x)}}{\partial_xY_{t}^{a,b}(t_k,x)} \Big\vert^p\Big]\\
	&\leq C\Big(\widetilde{\E}_{t_k,x}^{a,b}\Big[\dfrac{1}{\vert\partial_xY_{t}^{a,b}(t_k,x)\vert^{pp_2}}\Big]\Big)^{\frac{1}{p_2}}\Big(\widetilde{\E}_{t_k,x}^{a,b}\Big[\big\vert\sqrt{Y_t^{a,b}(t_k,x)}\big\vert^{pq_2}\Big]\Big)^{\frac{1}{q_2}}\leq C\Big(1+\dfrac{1}{x^{\frac{\frac{a}{\sigma}-1}{2p_2}-\frac{p}{2}}}\Big)(1+x^{\frac{p}{2}}),
	\end{align*}
	where $p_2>1$ and $p_2$ is close to $1$. Indeed, the condition required here is $-pp_2\geq -\frac{(\frac{a}{\sigma}-1)^2}{2(\frac{a}{\sigma}-\frac{1}{2})}$ and $\frac{a}{\sigma}>p+1+\sqrt{p(p+1)}$, thus we only need to choose $p_2\in(1,\frac{(\frac{a}{\sigma}-1)^2}{2p(\frac{a}{\sigma}-\frac{1}{2})})$. Hence, \eqref{dmif3} is valid under condition $\frac{a}{\sigma}>p+1+\sqrt{p(p+1)}$.	
	
	Finally, we treat \eqref{dmif2}. For $t_k\leq s\leq t \leq t_{k+1}$, using \eqref{Mallinveflow1} and \eqref{deriinver}, under $\frac{a}{\sigma}>\frac{9+\sqrt{57}}{2}$, we have
	\begin{align}\label{deriinver2}
	&Y_{t}^{a,b}(t_k,x)D_s\Big(\dfrac{1}{\partial_xY_{t}^{a,b}(t_k,x)}\Big)=-\frac{Y_{t}^{a,b}(t_k,x)}{\partial_xY_t^{a,b}(t_k,x)}\bigg\{\sqrt{\dfrac{\sigma}{2}}\dfrac{1}{\sqrt{Y_s^{a,b}(t_k,x)}}\notag\\
	&\qquad+\dfrac{\sigma}{4}\int_{s}^{t}\dfrac{\sqrt{2\sigma}}{(Y_u^{a,b}(t_k,x))^{\frac{3}{2}}}\exp\Big\{\int_s^u\Big(-\dfrac{b}{2}-\big(\dfrac{a}{2}-\dfrac{\sigma}{4}\big)\dfrac{1}{Y_{\xi}^{a,b}(t_k,x)}\Big)d\xi\Big\}du\notag\\
	&\qquad-\dfrac{\sigma}{2}\int_{s}^{t}\dfrac{1}{Y_u^{a,b}(t_k,x)}\exp\Big\{\int_s^u\Big(-\dfrac{b}{2}-\big(\dfrac{a}{2}-\dfrac{\sigma}{4}\big)\dfrac{1}{Y_{\xi}^{a,b}(t_k,x)}\Big)d\xi\Big\}dW_u\bigg\}.
	\end{align}
	Then, using the fact that the exponential terms can be bounded by a positive constant since $a>\sigma$, BDG's and H\"older's inequalities with $\frac{1}{p_2}+\frac{1}{q_2}=1$ and $\frac{1}{p_3}+\frac{1}{q_3}=1$, \eqref{e1}, \eqref{e2} and \eqref{e4}, we get that
	\begin{align*}
	&\widetilde{\E}_{t_k,x}^{a,b}\Big[\Big\vert Y_{t}^{a,b}(t_k,x)D_s\Big(\dfrac{1}{\partial_xY_{t}^{a,b}(t_k,x)}\Big)\Big\vert^p\Big]\leq C\widetilde{\E}_{t_k,x}^{a,b}\Big[\Big\vert \frac{Y_{t}^{a,b}(t_k,x)}{\partial_xY_t^{a,b}(t_k,x)}\dfrac{1}{\sqrt{Y_s^{a,b}(t_k,x)}}\Big\vert^{p}\Big]\\
	&+C\widetilde{\E}_{t_k,x}^{a,b}\Big[\Big\vert \frac{Y_{t}^{a,b}(t_k,x)}{\partial_xY_t^{a,b}(t_k,x)}\int_{s}^{t}\dfrac{du}{(Y_u^{a,b}(t_k,x))^{\frac{3}{2}}}\Big\vert^{p}\Big]+C\widetilde{\E}_{t_k,x}^{a,b}\Big[\Big\vert \frac{(Y_{t}^{a,b}(t_k,x))^2}{(\partial_xY_t^{a,b}(t_k,x))^2}\int_{s}^{t}\dfrac{du}{(Y_u^{a,b}(t_k,x))^2}\Big\vert^{\frac{p}{2}}\Big]\\
	&\leq C\Big(\widetilde{\E}_{t_k,x}^{a,b}\Big[\Big\vert \frac{1}{\partial_xY_t^{a,b}(t_k,x)}\dfrac{1}{\sqrt{Y_s^{a,b}(t_k,x)}}\Big\vert^{pp_2}\Big]\Big)^{\frac{1}{p_2}}\Big(\widetilde{\E}_{t_k,x}^{a,b}\Big[\Big\vert Y_{t}^{a,b}(t_k,x)\Big\vert^{pq_2}\Big]\Big)^{\frac{1}{q_2}}\\
	&\qquad+C\Big(\widetilde{\E}_{t_k,x}^{a,b}\Big[\Big\vert \frac{1}{\partial_xY_t^{a,b}(t_k,x)}\int_{s}^{t}\dfrac{du}{(Y_u^{a,b}(t_k,x))^{\frac{3}{2}}}\Big\vert^{pp_2}\Big]\Big)^{\frac{1}{p_2}}\Big(\widetilde{\E}_{t_k,x}^{a,b}\Big[\Big\vert Y_{t}^{a,b}(t_k,x)\Big\vert^{pq_2}\Big]\Big)^{\frac{1}{q_2}}\\
	&\qquad+C\Big(\widetilde{\E}_{t_k,x}^{a,b}\Big[\Big\vert \frac{1}{(\partial_xY_t^{a,b}(t_k,x))^2}\int_{s}^{t}\dfrac{du}{(Y_u^{a,b}(t_k,x))^2}\Big\vert^{\frac{p}{2}p_2}\Big]\Big)^{\frac{1}{p_2}}\Big(\widetilde{\E}_{t_k,x}^{a,b}\Big[\Big\vert Y_{t}^{a,b}(t_k,x)\Big\vert^{pq_2}\Big]\Big)^{\frac{1}{q_2}}\\
	&\leq C\Big(\widetilde{\E}_{t_k,x}^{a,b}\Big[\frac{1}{\vert\partial_xY_t^{a,b}(t_k,x)\vert^{pp_2p_3}}\Big]\Big)^{\frac{1}{p_2p_3}}\Big(\widetilde{\E}_{t_k,x}^{a,b}\Big[\dfrac{1}{\vert Y_s^{a,b}(t_k,x)\vert^{\frac{1}{2}pp_2q_3}}\Big]\Big)^{\frac{1}{p_2q_3}}(1+x^p)\\
	&\qquad+C\Big(\widetilde{\E}_{t_k,x}^{a,b}\Big[\frac{1}{\vert\partial_xY_t^{a,b}(t_k,x)\vert^{pp_2p_3}}\Big]\Big)^{\frac{1}{p_2p_3}}\Big(\widetilde{\E}_{t_k,x}^{a,b}\Big[\Big\vert\int_{s}^{t}\dfrac{du}{(Y_u^{a,b}(t_k,x))^{\frac{3}{2}}}\Big\vert^{pp_2q_3}\Big]\Big)^{\frac{1}{p_2q_3}}(1+x^p)\\
	&\qquad+C\Big(\widetilde{\E}_{t_k,x}^{a,b}\Big[\frac{1}{\vert\partial_xY_t^{a,b}(t_k,x)\vert^{pp_2p_3}}\Big]\Big)^{\frac{1}{p_2p_3}}\Big(\widetilde{\E}_{t_k,x}^{a,b}\Big[\Big\vert\int_{s}^{t}\dfrac{du}{(Y_u^{a,b}(t_k,x))^2}\Big\vert^{\frac{p}{2}p_2q_3}\Big]\Big)^{\frac{1}{p_2q_3}}(1+x^p)\\
	&\leq C\Big(1+\dfrac{1}{x^{\frac{\frac{a}{\sigma}-1}{2p_2p_3}-\frac{p}{2}}}\Big)\dfrac{1}{x^{\frac{p}{2}}}(1+x^p)+C\Delta_n^{p}\Big(1+\dfrac{1}{x^{\frac{\frac{a}{\sigma}-1}{2p_2p_3}-\frac{p}{2}}}\Big)\dfrac{1}{x^{\frac{3}{2}p}}(1+x^p)\\
	&\qquad+C\Delta_n^{\frac{p}{2}}\Big(1+\dfrac{1}{x^{\frac{\frac{a}{\sigma}-1}{2p_2p_3}-\frac{p}{2}}}\Big)\dfrac{1}{x^{p}}(1+x^p)\leq C\Big(1+\frac{1}{x^{\frac{\frac{a}{\sigma}-1}{2p_2p_3}-\frac{p}{2}}}\Big)(1+x^p)\Big(\frac{1}{x^{\frac{p}{2}}}+\frac{1}{x^{\frac{3}{2}p}}+\frac{1}{x^{p}}\Big),
	\end{align*}
	where $p_2>1$ and $p_2$ is close to $1$. Here, conditions are required as follows
	\begin{align*}
	-pp_2p_3\geq -\frac{(\frac{a}{\sigma}-1)^2}{2(\frac{a}{\sigma}-\frac{1}{2})},\;
	\frac{3}{2}pp_2q_3<\dfrac{a}{\sigma}-1.
	\end{align*}
	This implies that
	\begin{align*}
	pp_3<\frac{(\frac{a}{\sigma}-1)^2}{2(\frac{a}{\sigma}-\frac{1}{2})},\;
	\frac{3}{2}pq_3<\dfrac{a}{\sigma}-1.
	\end{align*}
	Thus,
	\begin{align*}
	\frac{a}{\sigma}> pp_3+\sqrt{pp_3\left(pp_3+1\right)}+1,\;
	\frac{a}{\sigma}>\frac{3}{2}pq_3+1=\frac{3}{2}\frac{pp_3}{p_3-1}+1.
	\end{align*}
	Hence, the optimal choice for $p_3$ is solution to
	$$
	pp_3+\sqrt{pp_3\left(pp_3+1\right)}=\frac{3}{2}\frac{pp_3}{p_3-1}.
	$$	
	The unique positive solution is given by $p_3=\frac{21p+8+\sqrt{9p(49p+16)}}{8(3p+1)}>1$. Thus, \eqref{dmif2} is valid under condition $\frac{a}{\sigma}>\frac{7p+4+\sqrt{p(49p+16)}}{4}$.
\end{proof}	 

\subsection{Proof of Proposition \ref{c2prop1}}
\begin{proof}
	We are going to apply Theorem \ref{ruleproNualart}. First, we wish to show  $\partial_{a}Y_{t_{k+1}}^{a,b}(t_k,x)U^{a,b}(t_k,x)\in \textnormal{Dom}\ \delta$ under condition $\frac{a}{\sigma}>\frac{9+\sqrt{57}}{2}$. For this, we write 
	\begin{align}\label{Fu}
	\partial_{a}Y_{t_{k+1}}^{a,b}(t_k,x)U_{\cdot}^{a,b}(t_k,x)&=\frac{\partial_{a}Y_{t_{k+1}}^{a,b}(t_k,x)}{\sqrt{2\sigma Y_{\cdot}^{a,b}(t_k,x)}}(\partial_xY_{t_{k+1}}^{a,b}(t_k,x))^{-1}\partial_xY_{\cdot}^{a,b}(t_k,x)=Fu_{\cdot},
	\end{align}
	where $F=\partial_{a}Y_{t_{k+1}}^{a,b}(t_k,x)(\partial_xY_{t_{k+1}}^{a,b}(t_k,x))^{-1}$ and $u_{\cdot}=\frac{1}{\sqrt{2\sigma Y_{\cdot}^{a,b}(t_k,x)}}\partial_xY_{\cdot}^{a,b}(t_k,x)$. Here $u=(u_t, t\in [t_k,t_{k+1}])$ is an adapted process then it belongs to $\textnormal{Dom}\ \delta$. By Theorem \ref{ruleproNualart}, it suffices to show that $F\in \mathbb{D}^{1,2}$ and $Fu\in L^2(\widetilde{\Omega};H)\cong L^2([t_k,t_{k+1}]\times \widetilde{\Omega},\R)$, where $H=L^2([t_k,t_{k+1}],\R)$.
	
	From \eqref{dxa2}, we write
	\begin{align*}
	F=\dfrac{\partial_{a}Y_{t_{k+1}}^{a,b}(t_k,x)}{\partial_xY_{t_{k+1}}^{a,b}(t_k,x)}=\int_{t_k}^{t_{k+1}}\frac{dr}{\partial_xY_r^{a,b}(t_k,x)}.
	\end{align*}
	By assertion (ii) of Lemma \ref{Malliderivable}, under condition $\frac{a}{\sigma}>\frac{9+\sqrt{57}}{2}$, $(\partial_xY_t^{a,b}(t_k,x))^{-1}\in \mathbb{D}^{1,2}$ for any $t\in [t_k,t_{k+1}]$. Thus, it is straightforward that
	$F\in \mathbb{D}^{1,2}$ under condition $\frac{a}{\sigma}>\frac{9+\sqrt{57}}{2}$. Furthermore, for $t_k\leq s\leq r \leq t_{k+1}$,
	\begin{align*}
	D_sF=\int_{s}^{t_{k+1}}D_s\Big(\frac{1}{\partial_xY_r^{a,b}(t_k,x)}\Big)dr.
	\end{align*}	
	Next, we check $Fu\in L^2([t_k,t_{k+1}]\times \widetilde{\Omega},\R)$. For this, using H\"older's inequality repeatedly with $\frac{1}{p}+\frac{1}{q}=1$ and $\frac{1}{p_1}+\frac{1}{q_1}=1$, together with \eqref{e2}-\eqref{e4}, we get that
	\begin{align*}
	&\widetilde{\E}_{t_k,x}^{a,b}\big[(Fu_t)^2\big]=\widetilde{\E}_{t_k,x}^{a,b}\Big[\Big(\int_{t_k}^{t_{k+1}}\frac{dr}{\partial_xY_r^{a,b}(t_k,x)}\frac{1}{\sqrt{2\sigma Y_{t}^{a,b}(t_k,x)}}\partial_xY_{t}^{a,b}(t_k,x)\Big)^{2}\Big]\\
	&\leq\dfrac{1}{2\sigma}\Big(\widetilde{\E}_{t_k,x}^{a,b}\Big[\Big\vert\int_{t_k}^{t_{k+1}}\frac{dr}{\partial_xY_r^{a,b}(t_k,x)}\frac{1}{\sqrt{ Y_{t}^{a,b}(t_k,x)}}\Big\vert^{2p}\Big]\Big)^{\frac{1}{p}}\Big(\widetilde{\E}_{t_k,x}^{a,b}\Big[\big\vert \partial_xY_{t}^{a,b}(t_k,x)\big\vert^{2q}\Big]\Big)^{\frac{1}{q}}\\
	&\leq C\Big(\widetilde{\E}_{t_k,x}^{a,b}\Big[\Big\vert\int_{t_k}^{t_{k+1}}\frac{dr}{\partial_xY_r^{a,b}(t_k,x)}\Big\vert^{2pp_1}\Big]\Big)^{\frac{1}{pp_1}}\Big(\widetilde{\E}_{t_k,x}^{a,b}\Big[\frac{1}{\vert Y_{t}^{a,b}(t_k,x)\vert^{pq_1}}\Big]\Big)^{\frac{1}{pq_1}}\Big(1+\dfrac{1}{x^{\frac{\frac{a}{\sigma}-1+2q}{2}}}\Big)^{\frac{1}{q}}\\
	&\leq C\Big(\Delta_n^{2pp_1-1}\int_{t_k}^{t_{k+1}}\widetilde{\E}_{t_k,x}^{a,b}\Big[\frac{1}{\vert\partial_xY_r^{a,b}(t_k,x)\vert^{2pp_1}}\Big]dr\Big)^{\frac{1}{pp_1}}\dfrac{1}{x}\Big(1+\dfrac{1}{x^{\frac{\frac{a}{\sigma}-1}{2q}+1}}\Big)\\
	&\leq C\Delta_n^2\Big(1+\dfrac{1}{x^{\frac{\frac{a}{\sigma}-1}{2pp_1}-1}}\Big)\dfrac{1}{x}\Big(1+\dfrac{1}{x^{\frac{\frac{a}{\sigma}-1}{2q}+1}}\Big),
	\end{align*}
	for some constant $C>0$, where $p>1$ and $p$ is close to $1$. This shows that
	\begin{align*}
	\widetilde{\E}_{t_k,x}^{a,b}\Big[\int_{t_k}^{t_{k+1}}\left(Fu_t\right)^2dt\Big]&=\int_{t_k}^{t_{k+1}}\widetilde{\E}_{t_k,x}^{a,b}\Big[(Fu_t)^2\Big]dt<+\infty.
	\end{align*}
	All conditions required here are as follows
	\begin{align*}
	 -2pp_1\geq -\frac{(\frac{a}{\sigma}-1)^2}{2(\frac{a}{\sigma}-\frac{1}{2})},\;  pq_1<\dfrac{a}{\sigma}-1.
	\end{align*}
	That is,
	\begin{align*}
	-2p_1>-\frac{(\frac{a}{\sigma}-1)^2}{2(\frac{a}{\sigma}-\frac{1}{2})},\;  q_1<\dfrac{a}{\sigma}-1.
	\end{align*}
	This implies that
	\begin{align*}
	\frac{a}{\sigma}> 2p_1+\sqrt{2p_1\left(2p_1+1\right)}+1,\;
	\frac{a}{\sigma}>q_1+1=\frac{p_1}{p_1-1}+1.
	\end{align*}	
	Hence, the optimal choice for $p_1$ is solution to
	$$
	2p_1+\sqrt{2p_1\left(2p_1+1\right)}=\frac{p_1}{p_1-1}.
	$$	
	The unique positive solution is given by $p_1=\frac{9+\sqrt{33}}{12}$. Thus, $Fu\in L^2([t_k,t_{k+1}]\times \widetilde{\Omega},\R)$ under $\frac{a}{\sigma}>\frac{7+\sqrt{33}}{2}$. Hence, we have 
	shown that $\partial_{a}Y_{t_{k+1}}^{a,b}(t_k,x)U^{a,b}(t_k,x)\in \textnormal{Dom}\ \delta$ under condition $\frac{a}{\sigma}>\frac{9+\sqrt{57}}{2}$. 	
	
	Next, under condition $\frac{a}{\sigma}>\frac{9+\sqrt{57}}{2}$ we proceed as in the proof of Proposition 3.1 of \cite{KNT15} with $\beta=a$ (see pages 441 and 442) to get the following representation of the score function
	\begin{align*}
	\dfrac{\partial_{a}p^{a,b}}{p^{a,b}}\left(\Delta_n,x,y\right)=\dfrac{1}{\Delta_n}\widetilde{\E}_{t_k,x}^{a,b}\left[\delta\left(\partial_{a}Y_{t_{k+1}}^{a,b}(t_k,x)U^{a,b}(t_k,x)\right)\big\vert Y_{t_{k+1}}^{a,b}=y\right].
	\end{align*}	
	Now, we wish to show \eqref{deria}. In fact, using condition $\frac{a}{\sigma}>\frac{9+\sqrt{57}}{2}$ and the fact that the Skorohod integral and the It\^o integral of an adapted process coincide $\delta(u)=\int_{t_k}^{t_{k+1}}\frac{\partial_{x}Y_{s}^{a,b}(t_k,x)}{\sqrt{2\sigma Y_s^{a,b}(t_k,x)}}dW_s$, we have		
		\begin{equation}\label{decom1}\begin{split}
		&F\delta(u)-\left<DF,u\right>_H=\int_{t_k}^{t_{k+1}}\frac{dr}{\partial_xY_r^{a,b}(t_k,x)}\int_{t_k}^{t_{k+1}}\frac{\partial_{x}Y_{s}^{a,b}(t_k,x)}{\sqrt{2\sigma Y_s^{a,b}(t_k,x)}}dW_s\\
		&\qquad-\int_{t_k}^{t_{k+1}}\int_{s}^{t_{k+1}}D_s\Big(\dfrac{1}{\partial_xY_{r}^{a,b}(t_k,x)}\Big)dr\frac{\partial_{x}Y_{s}^{a,b}(t_k,x)}{\sqrt{2\sigma Y_s^{a,b}(t_k,x)}}ds.
		\end{split}
		\end{equation}
		We next add and subtract the term $\frac{\partial_{x}Y_{t_k}^{a,b}(t_k,x)}{\sqrt{2\sigma Y_{t_k}^{a,b}(t_k,x)}}$ in the second integral, and the term $\frac{1}{\partial_xY_{t_k}^{a,b}(t_k,x)}$ in the first integral. This, together with $Y_{t_k}^{a,b}(t_k,x)=x$, implies that 
		\begin{equation}\label{e0}
		F\delta(u)-\left<DF,u\right>_H
		=\frac{\Delta_n}{\sqrt{2\sigma x}} \left(W_{t_{k+1}}-W_{t_{k}}\right)+R_1^{a,b}+R_2^{a,b}+R_3^{a,b}.
		\end{equation}	
		Then, applying \eqref{es2} of Lemma \ref{estimate} with $q=1$, under condition $\frac{a}{\sigma}>5+3\sqrt{2}$, we get 
		\begin{align*}
		\widetilde{\E}_{t_k,x}^{a,b}\Big[\big(F\delta(u)-\left<DF,u\right>_H\big)^2\Big]&\leq \frac{\Delta_n^2}{\sigma x}\widetilde{\E}_{t_k,x}^{a,b}\Big[\big(W_{t_{k+1}}-W_{t_{k}}\big)^2\Big]+2\widetilde{\E}_{t_k,x}^{a,b}\Big[\big(R_1^{a,b}+R_2^{a,b}+R_3^{a,b}\big)^2\Big]\\
		&\leq \frac{\Delta_n^3}{\sigma x}+C	\Delta_n^{4}\sum_{\alpha(1)\in I}\dfrac{1}{x^{\alpha(1)}}<+\infty,
		\end{align*}
		where $I$ is a finite set. Thus, we have shown that $F\delta(u)-\left<DF,u\right>_H$ is square integrable under condition $\frac{a}{\sigma}>5+3\sqrt{2}$. Consequently, by Theorem \ref{ruleproNualart}, under condition $\frac{a}{\sigma}>5+3\sqrt{2}$ we have 
		\begin{align*}
		\delta(Fu)=F\delta(u)-\left<DF,u\right>_H.
		\end{align*}
		This, together with \eqref{Fu} and \eqref{e0}, gives \eqref{deria} under condition $\frac{a}{\sigma}>5+3\sqrt{2}$.	
			
		Finally, we write 
		\begin{align}\label{Fu2}
		\partial_{b}Y_{t_{k+1}}^{a,b}(t_k,x)U_{\cdot}^{a,b}(t_k,x)&=\frac{\partial_{b}Y_{t_{k+1}}^{a,b}(t_k,x)}{\sqrt{2\sigma Y_{\cdot}^{a,b}(t_k,x)}}(\partial_xY_{t_{k+1}}^{a,b}(t_k,x))^{-1}\partial_xY_{\cdot}^{a,b}(t_k,x)=\overline{F}u_{\cdot},
		\end{align}
		where $\overline{F}=\partial_{b}Y_{t_{k+1}}^{a,b}(t_k,x)(\partial_xY_{t_{k+1}}^{a,b}(t_k,x))^{-1}$. From \eqref{dxb2}, we have
		\begin{align*}
		\overline{F}=\dfrac{\partial_{b}Y_{t_{k+1}}^{a,b}(t_k,x)}{\partial_xY_{t_{k+1}}^{a,b}(t_k,x)}=-\int_{t_k}^{t_{k+1}}\dfrac{Y_{r}^{a,b}(t_k,x)}{\partial_xY_{r}^{a,b}(t_k,x)}dr.
		\end{align*}
		Then, by assertion (ii) of Lemma \ref{Malliderivable} and proceeding as above, we get that under condition $\frac{a}{\sigma}>\frac{9+\sqrt{57}}{2}$, $\partial_{b}Y_{t_{k+1}}^{a,b}(t_k,x)U^{a,b}(t_k,x)\in \textnormal{Dom}\ \delta$ and
			\begin{align*}
			\dfrac{\partial_{b}p^{a,b}}{p^{a,b}}\left(\Delta_n,x,y\right)=\dfrac{1}{\Delta_n}\widetilde{\E}_{t_k,x}^{a,b}\left[\delta\left(\partial_{b}Y_{t_{k+1}}^{a,b}(t_k,x)U^{a,b}(t_k,x)\right)\big\vert Y_{t_{k+1}}^{a,b}=y\right].
			\end{align*}
			Furthermore, using the same arguments as for \eqref{decom1}, we can easily check that under condition $\frac{a}{\sigma}>\frac{9+\sqrt{57}}{2}$,
			\begin{align*}
			&\overline{F}\delta(u)-\left<D\overline{F},u\right>_H=-\int_{t_k}^{t_{k+1}}\frac{Y_r^{a,b}(t_k,x)}{\partial_xY_r^{a,b}(t_k,x)}dr\int_{t_k}^{t_{k+1}}\frac{\partial_{x}Y_{s}^{a,b}(t_k,x)}{\sqrt{2\sigma Y_s^{a,b}(t_k,x)}}dW_s\\
			&\qquad+\int_{t_k}^{t_{k+1}}\int_{s}^{t_{k+1}}D_s\Big(\dfrac{Y_{r}^{a,b}(t_k,x)}{\partial_xY_{r}^{a,b}(t_k,x)}\Big)dr\frac{\partial_{x}Y_{s}^{a,b}(t_k,x)}{\sqrt{2\sigma Y_s^{a,b}(t_k,x)}}ds.
			\end{align*}
			We next add and subtract the term $
			\frac{Y_{t_k}^{a,b}(t_k,x)}{\partial_{x}Y_{t_k}^{a,b}(t_k,x)}$ in the first integral, and the term $\frac{\partial_{x}Y_{t_k}^{a,b}(t_k,x)}{\sqrt{2\sigma Y_{t_k}^{a,b}(t_k,x)}}$ in the second integral. This, together with $Y_{t_k}^{a,b}(t_k,x)=x$, shows that
			\begin{equation}\label{H} \begin{split}
			\overline{F}\delta(u)-\left<D\overline{F},u\right>_H=-\frac{\Delta_n}{\sqrt{2\sigma }}\sqrt{x} \left(W_{t_{k+1}}-W_{t_{k}}\right)-xR_1^{a,b}+H_2^{a,b}+H_3^{a,b}.
			\end{split}
			\end{equation}
			Then, applying \eqref{es4} of Lemma \ref{estimate} with $q=1$, we get that $\overline{F}\delta(u)-\left<D\overline{F},u\right>_H$ is square integrable under condition $\frac{a}{\sigma}>5+3\sqrt{2}$. Consequently, from \eqref{ruleproduct} of Theorem \ref{ruleproNualart}, \eqref{Fu2} and \eqref{H} we conclude \eqref{derib} under condition $\frac{a}{\sigma}>5+3\sqrt{2}$. Thus, the result follows.		
		\end{proof}

\subsection{Proof of Lemma \ref{estimate}}
\label{Alowerbound}
\begin{proof}
	{\it Proof of \eqref{es1} and \eqref{es3}.} These facts follow easily from \eqref{deria}, \eqref{derib}, and properties of the moment of the Skorohod integral and the Brownian motion.
	\vskip 5pt
	
	{\it Proof of \eqref{es2}.} Observe that
	\begin{equation}\label{r}
	\begin{split}
	&\widetilde{\E}_{t_k,x}^{a,b}\Big[\vert R_1^{a,b}+R_2^{a,b}+R_3^{a,b}\vert^{2q}\Big]\leq 3^{2q-1}\Big(\widetilde{\E}_{t_k,x}^{a,b}[\vert R_1^{a,b}\vert^{2q}]+\widetilde{\E}_{t_k,x}^{a,b}[\vert R_2^{a,b}\vert^{2q}]+\widetilde{\E}_{t_k,x}^{a,b}[\vert R_3^{a,b}\vert^{2q}]\Big).
	\end{split}
	\end{equation}
	First, we treat the term $R_1^{a,b}$. Using BDG's inequality, we have that
	\begin{align*}
	\widetilde{\E}_{t_k,x}^{a,b}[\vert R_1^{a,b}\vert^{2q}]&\leq  C\Delta_n^{2q} \widetilde{\E}_{t_k,x}^{a,b}\Big[\Big\vert\int_{t_k}^{t_{k+1}}\Big(\frac{\partial_{x}Y_{s}^{a,b}(t_k,x)}{\sqrt{Y_s^{a,b}(t_k,x)}}-\frac{\partial_{x}Y_{t_k}^{a,b}(t_k,x)}{\sqrt{Y_{t_k}^{a,b}(t_k,x)}}\Big)^2ds\Big\vert^q\Big]\\
	&\leq C\Delta_n^{2q} \Delta_n^{q-1}\int_{t_k}^{t_{k+1}}R_{11}^{a,b}ds,
	\end{align*}
	where 
	$$
	R_{11}^{a,b}=\widetilde{\E}_{t_k,x}^{a,b}\Big[\Big\vert\frac{\partial_{x}Y_{s}^{a,b}(t_k,x)}{\sqrt{Y_s^{a,b}(t_k,x)}}-\frac{\partial_{x}Y_{t_k}^{a,b}(t_k,x)}{\sqrt{Y_{t_k}^{a,b}(t_k,x)}}\Big\vert^{2q}\Big].
	$$
	By It\^o's formula, it can be checked that
	\begin{align*}
	&\frac{\partial_{x}Y_{s}^{a,b}(t_k,x)}{\sqrt{Y_s^{a,b}(t_k,x)}}-\frac{\partial_{x}Y_{t_k}^{a,b}(t_k,x)}{\sqrt{Y_{t_k}^{a,b}(t_k,x)}}=\int_{t_k}^{s}\partial_{x}Y_{u}^{a,b}(t_k,x)\Big(\frac{-\frac{a}{2}+\frac{\sigma}{4}}{(Y_{u}^{a,b}(t_k,x))^{\frac{3}{2}}}-\frac{b}{2\sqrt{Y_{u}^{a,b}(t_k,x)}}\Big)du,
	\end{align*}
	which, together with H\"older's inequality with $\frac{1}{p_0}+\frac{1}{q_0}=1$, \eqref{e2} and \eqref{e4}, implies that
	\begin{align*}
	R_{11}^{a,b}&\leq C\Delta_n^{2q-1}\int_{t_k}^{s}\Big\{\widetilde{\E}_{t_k,x}^{a,b}\Big[\Big\vert\frac{\partial_{x}Y_{u}^{a,b}(t_k,x)}{(Y_{u}^{a,b}(t_k,x))^{\frac{3}{2}}}\Big\vert^{2q}\Big]+\widetilde{\E}_{t_k,x}^{a,b}\Big[\Big\vert\frac{\partial_{x}Y_{u}^{a,b}(t_k,x)}{\sqrt{Y_{u}^{a,b}(t_k,x)}}\Big\vert^{2q}\Big]\Big\}du\\
	&\leq C\Delta_n^{2q-1}\int_{t_k}^{s}\Big\{\Big(\widetilde{\E}_{t_k,x}^{a,b}\left[\vert\partial_{x}Y_{u}^{a,b}(t_k,x)\vert^{2qp_0}\right]\Big)^{\frac{1}{p_0}}\Big(\widetilde{\E}_{t_k,x}^{a,b}\Big[\frac{1}{\vert Y_{u}^{a,b}(t_k,x)\vert^{3qq_0}}\Big]\Big)^{\frac{1}{q_0}}\\
	&\qquad+\Big(\widetilde{\E}_{t_k,x}^{a,b}\Big[\vert\partial_{x}Y_{u}^{a,b}(t_k,x)\vert^{2qp_0}\Big]\Big)^{\frac{1}{p_0}}\Big(\widetilde{\E}_{t_k,x}^{a,b}\Big[\frac{1}{\vert Y_{u}^{a,b}(t_k,x)\vert^{qq_0}}\Big]\Big)^{\frac{1}{q_0}}\Big\}du\\
	&\leq C\Delta_n^{2q-1}\int_{t_k}^{s}\Big\{\Big(1+\dfrac{1}{x^{\frac{\frac{a}{\sigma}-1+2qp_0}{2}}}\Big)^{\frac{1}{p_0}}\big(\dfrac{1}{x^{3qq_0}}\big)^{\frac{1}{q_0}}+\Big(1+\dfrac{1}{x^{\frac{\frac{a}{\sigma}-1+2qp_0}{2}}}\Big)^{\frac{1}{p_0}}\big(\dfrac{1}{x^{qq_0}}\big)^{\frac{1}{q_0}}\Big\}du\\
	&\leq C\Delta_n^{2q}\Big\{\Big(1+\dfrac{1}{x^{\frac{\frac{a}{\sigma}-1}{2p_0}+q}}\Big)\dfrac{1}{x^{3q}}+\Big(1+\dfrac{1}{x^{\frac{\frac{a}{\sigma}-1}{2p_0}+q}}\Big)\dfrac{1}{x^q}\Big\}.
	\end{align*}
	Here, $q_0$ should be chosen close to $1$ in order that $3qq_0<\frac{a}{\sigma}-1$. Thus, by choosing $q_0\in(1,\frac{1}{3q}(\frac{a}{\sigma}-1))$, under condition $\frac{a}{\sigma}>3q+1$, we have 
	\begin{equation}\label{r1}
	\begin{split}
	\widetilde{\E}_{t_k,x}^{a,b}[\vert R_1^{a,b}\vert^{2q}]&\leq C\Delta_n^{5q}\Big(1+\dfrac{1}{x^{\frac{\frac{a}{\sigma}-1}{2p_0}+q}}\Big)\Big(\dfrac{1}{x^{3q}}+\dfrac{1}{x^q}\Big)\leq C\Delta_n^{5q}\big(\dfrac{1}{x^{q}}+\dfrac{1}{x^{\frac{\frac{a}{\sigma}-1}{2p_0}+4q}}\big),
	\end{split}
	\end{equation}
	where $p_0>1$ with $q_0=\frac{p_0}{p_0-1}$ close to $1$. Then, when $q\in [1,\frac{13+\sqrt{89}}{20}]$ and $\frac{a}{\sigma}>\frac{11+\sqrt{89}}{2}$ it suffices to choose $p_0$ large enough to get $\frac{\frac{a}{\sigma}-1}{2p_0}+4q<\frac{a}{\sigma}-1$.
	
	Next, we treat the term $R_2^{a,b}$. Using H\"older's inequality with $\frac{1}{\overline{p}}+\frac{1}{\overline{q}}=1$, we have
	\begin{align*}
	\widetilde{\E}_{t_k,x}^{a,b}[\vert R_2^{a,b}\vert^{2q}]\leq\big(R_{21}^{a,b}\big)^{\frac{1}{\overline{p}}}\big(R_{22}^{a,b}\big)^{\frac{1}{\overline{q}}},
	\end{align*}
	where
	\begin{align*}
	R_{21}^{a,b}&=\widetilde{\E}_{t_k,x}^{a,b}\Big[\Big\vert \int_{t_k}^{t_{k+1}}\Big(\frac{1}{\partial_{x}Y_{s}^{a,b}(t_k,x)}-\frac{1}{\partial_{x}Y_{t_k}^{a,b}(t_k,x)}\Big)ds\Big\vert^{2q\overline{p}}\Big],\\
	R_{22}^{a,b}&=\widetilde{\E}_{t_k,x}^{a,b}\Big[\Big\vert \int_{t_k}^{t_{k+1}}\frac{\partial_{x}Y_{s}^{a,b}(t_k,x)}{\sqrt{2\sigma Y_s^{a,b}(t_k,x)}}dW_s\Big\vert^{2q\overline{q}}\Big].
	\end{align*}
	First, observe that $R_{21}^{a,b}\leq  C\Delta_n^{2q\overline{p}-1}\int_{t_k}^{t_{k+1}}R_{212}^{a,b}ds$,
	where
	\begin{align*}
	R_{212}^{a,b}&=\widetilde{\E}_{t_k,x}^{a,b}\Big[\Big\vert\frac{1}{\partial_{x}Y_{s}^{a,b}(t_k,x)}-\frac{1}{\partial_{x}Y_{t_k}^{a,b}(t_k,x)}\Big\vert^{2q\overline{p}}\Big].
	\end{align*}
	From \eqref{px}, It\^o's formula and  $\partial_xY_{t_{k}}^{a,b}(t_k,x)=1$, for any $t\in [t_k,t_{k+1}]$, we have
	\begin{equation}\label{increa}\begin{split}
	\frac{1}{\partial_xY_{s}^{a,b}(t_k,x)}-\frac{1}{\partial_xY_{t_{k}}^{a,b}(t_k,x)}&=\int_{t_k}^s\Big(\frac{b}{\partial_{x}Y_{u}^{a,b}(t_k,x)}+\frac{\sigma}{Y_{u}^{a,b}(t_k,x)\partial_{x}Y_{u}^{a,b}(t_k,x)}\Big)du\\
	&\qquad-\int_{t_k}^s\frac{\sqrt{\sigma}}{\sqrt{2Y_{u}^{a,b}(t_k,x)}\partial_{x}Y_{u}^{a,b}(t_k,x)}dW_u.
	\end{split}
	\end{equation}
	Therefore, $R_{212}^{a,b}\leq C(R_{2121}^{a,b}+R_{2122}^{a,b}+R_{2123}^{a,b})$,
	where
	\begin{align*}
	R_{2121}^{a,b}&=\widetilde{\E}_{t_k,x}^{a,b}\Big[\Big\vert\int_{t_k}^s\frac{du}{\partial_{x}Y_{u}^{a,b}(t_k,x)}\Big\vert^{2q\overline{p}}\Big],\; R_{2122}^{a,b}=\widetilde{\E}_{t_k,x}^{a,b}\Big[\Big\vert\int_{t_k}^s\frac{du}{Y_{u}^{a,b}(t_k,x)\partial_{x}Y_{u}^{a,b}(t_k,x)}\Big\vert^{2q\overline{p}}\Big],\\
	R_{2123}^{a,b}&=\widetilde{\E}_{t_k,x}^{a,b}\Big[\Big\vert\int_{t_k}^s\frac{1}{\sqrt{Y_{u}^{a,b}(t_k,x)}\partial_{x}Y_{u}^{a,b}(t_k,x)}dW_u\Big\vert^{2q\overline{p}}\Big].
	\end{align*}
	Using \eqref{e4},
	\begin{align*}
	R_{2121}^{a,b}\leq \Delta_n^{2q\overline{p}-1}\int_{t_k}^s\widetilde{\E}_{t_k,x}^{a,b}\Big[\frac{1}{\vert\partial_{x}Y_{u}^{a,b}(t_k,x)\vert^{2q\overline{p}}}\Big]du\leq  C\Delta_n^{2q\overline{p}}\Big(1+\dfrac{1}{x^{\frac{\frac{a}{\sigma}-1}{2}-q\overline{p}}}\Big).
	\end{align*}		
	Next, using H\"older's inequality with $\frac{1}{p_1}+\frac{1}{q_1}=1$, \eqref{e2} and \eqref{e4},
	\begin{align*}
	R_{2122}^{a,b}&\leq \Delta_n^{2q\overline{p}-1}\int_{t_k}^s\widetilde{\E}_{t_k,x}^{a,b}\Big[\frac{1}{\vert Y_{u}^{a,b}(t_k,x)\partial_{x}Y_{u}^{a,b}(t_k,x)\vert^{2q\overline{p}}}\Big]du\\
	&\leq \Delta_n^{2q\overline{p}-1}\int_{t_k}^s\Big(\widetilde{\E}_{t_k,x}^{a,b}\Big[\frac{1}{\vert Y_{u}^{a,b}(t_k,x)\vert^{2p_1q\overline{p}}}\Big]\Big)^{\frac{1}{p_1}}\Big(\widetilde{\E}_{t_k,x}^{a,b}\Big[\frac{1}{\vert \partial_{x}Y_{u}^{a,b}(t_k,x)\vert^{2q_1q\overline{p}}}\Big]\Big)^{\frac{1}{q_1}}du\\
	&\leq C\Delta_n^{2q\overline{p}}\frac{1}{x^{2q\overline{p}}}\Big(1+\dfrac{1}{x^{\frac{\frac{a}{\sigma}-1}{2q_1}-q\overline{p}}}\Big).
	\end{align*}		
	Finally, using BDG's and H\"older's inequalities with $\frac{1}{p_1}+\frac{1}{q_1}=1$, \eqref{e2} and \eqref{e4},
	\begin{align*}
	R_{2123}^{a,b}&\leq C\Delta_n^{q\overline{p}-1}\int_{t_k}^s\widetilde{\E}_{t_k,x}^{a,b}\Big[\frac{1}{\vert\sqrt{Y_{u}^{a,b}(t_k,x)}\partial_{x}Y_{u}^{a,b}(t_k,x)\vert^{2q\overline{p}}}\Big]du\\
	&\leq C\Delta_n^{q\overline{p}-1}\int_{t_k}^s\Big(\widetilde{\E}_{t_k,x}^{a,b}\Big[\frac{1}{\vert Y_{u}^{a,b}(t_k,x)\vert^{p_1q\overline{p}}}\Big]\Big)^{\frac{1}{p_1}}\Big(\widetilde{\E}_{t_k,x}^{a,b}\Big[\frac{1}{\vert\partial_{x}Y_{u}^{a,b}(t_k,x)\vert^{2q_1q\overline{p}}}\Big]\Big)^{\frac{1}{q_1}}du\\
	&\leq C\Delta_n^{q\overline{p}}\frac{1}{x^{q\overline{p}}}\Big(1+\dfrac{1}{x^{\frac{\frac{a}{\sigma}-1}{2q_1}-q\overline{p}}}\Big).
	\end{align*}		
	Thus, we have shown that
	\begin{align*}
	R_{212}^{a,b}&\leq C\Delta_n^{2q\overline{p}}\Big(1+\dfrac{1}{x^{\frac{\frac{a}{\sigma}-1}{2}-q\overline{p}}}\Big)+C\Delta_n^{2q\overline{p}}\frac{1}{x^{2q\overline{p}}}\Big(1+\dfrac{1}{x^{\frac{\frac{a}{\sigma}-1}{2q_1}-q\overline{p}}}\Big)+C\Delta_n^{q\overline{p}}\frac{1}{x^{q\overline{p}}}\Big(1+\dfrac{1}{x^{\frac{\frac{a}{\sigma}-1}{2q_1}-q\overline{p}}}\Big),
	\end{align*}
	which implies that
	\begin{align*}
	R_{21}^{a,b}&\leq C\Delta_n^{2q\overline{p}}\Big\{\Delta_n^{2q\overline{p}}\Big(1+\dfrac{1}{x^{\frac{\frac{a}{\sigma}-1}{2}-q\overline{p}}}\Big)+\Delta_n^{2q\overline{p}}\frac{1}{x^{2q\overline{p}}}\Big(1+\dfrac{1}{x^{\frac{\frac{a}{\sigma}-1}{2q_1}-q\overline{p}}}\Big)+\Delta_n^{q\overline{p}}\frac{1}{x^{q\overline{p}}}\Big(1+\dfrac{1}{x^{\frac{\frac{a}{\sigma}-1}{2q_1}-q\overline{p}}}\Big)\Big\}.
	\end{align*}
	Next, using BDG's and H\"older's inequalities with $\frac{1}{p_2}+\frac{1}{q_2}=1$,  \eqref{e2} and \eqref{e4}, 
	\begin{align*}
	R_{22}^{a,b}&\leq C\widetilde{\E}_{t_k,x}^{a,b}\Big[\Big\vert \int_{t_k}^{t_{k+1}}\frac{(\partial_{x}Y_{s}^{a,b}(t_k,x))^2}{Y_s^{a,b}(t_k,x)}ds\Big\vert^{q\overline{q}}\Big]\leq C\Delta_n^{q\overline{q}-1}\int_{t_k}^{t_{k+1}}\widetilde{\E}_{t_k,x}^{a,b}\Big[\Big\vert \frac{\partial_{x}Y_{s}^{a,b}(t_k,x)}{\sqrt{Y_s^{a,b}(t_k,x)}}\Big\vert^{2q\overline{q}}\Big]ds\\
	&\leq C\Delta_n^{q\overline{q}-1}\int_{t_k}^{t_{k+1}}\Big(\widetilde{\E}_{t_k,x}^{a,b}\Big[\frac{1}{\vert Y_s^{a,b}(t_k,x)\vert^{p_2q\overline{q}}}\Big]\Big)^{\frac{1}{p_2}}\Big(\widetilde{\E}_{t_k,x}^{a,b}\Big[\big\vert \partial_{x}Y_{s}^{a,b}(t_k,x)\big\vert^{2q_2q\overline{q}}\Big]\Big)^{\frac{1}{q_2}}ds\\
	&\leq C\Delta_n^{q\overline{q}}\dfrac{1}{x^{q\overline{q}}}\Big(1+\dfrac{1}{x^{\frac{\frac{a}{\sigma}-1}{2q_2}+q\overline{q}}}\Big).
	\end{align*}
	Here, $p_2$ should be chosen close to $1$ in order that $p_2q\overline{q}<\dfrac{a}{\sigma}-1$. In order to apply \eqref{e2} and \eqref{e4} to estimate two terms above $R_{21}^{a,b}$ and $R_{22}^{a,b}$, all conditions required here are as follows
	\begin{align*}
	-2q_1q\overline{p}\geq -\frac{(\frac{a}{\sigma}-1)^2}{2(\frac{a}{\sigma}-\frac{1}{2})},\;
	2p_1q\overline{p}<\dfrac{a}{\sigma}-1,\;
	q\overline{q}<\dfrac{a}{\sigma}-1.
	\end{align*}
	This implies that
	\begin{align*}
	\frac{a}{\sigma}\geq 2q_1q\overline{p}+\sqrt{2q_1q\overline{p}\left(2q_1q\overline{p}+1\right)}+1,\;
	\frac{a}{\sigma}>\frac{2q_1q\overline{p}}{q_1-1}+1,\;
	\frac{a}{\sigma}>\frac{q\overline{p}}{\overline{p}-1}+1.
	\end{align*}	
	Here, the optimal choice for $\overline{p}$ and $q_1$ corresponds to choose them in a way which gives minimal restrictions on the ratio $\frac{a}{\sigma}$. That is,
	$$
	2q_1q\overline{p}+\sqrt{2q_1q\overline{p}\left(2q_1q\overline{p}+1\right)}=\frac{2q_1q\overline{p}}{q_1-1}=\frac{q\overline{p}}{\overline{p}-1}.
	$$	
	Thus, the unique solution is given by $\overline{p}=\frac{21q+4+3\sqrt{q(49q+8)}}{2(11q+2+\sqrt{q(49q+8)})}$ and $q_1=\frac{11q+2+\sqrt{q(49q+8)}}{2(6q+1)}$, which implies that $\frac{a}{\sigma}>\frac{7}{2}q+\frac{1}{2}\sqrt{q(49q+8)}+1$.	
	Therefore, under $\frac{a}{\sigma}>\frac{7}{2}q+\frac{1}{2}\sqrt{q(49q+8)}+1$, we have shown that
	\begin{align}\label{r2}
	\widetilde{\E}_{t_k,x}^{a,b}[\vert R_2^{a,b}\vert^{2q}]&\leq  C\dfrac{\Delta_n^{4q}}{x^{q}}\Big(1+\dfrac{1}{x^{\frac{\frac{a}{\sigma}-1}{2q_2\overline{q}}+q}}\Big)\Big\{\Big(1+\dfrac{1}{x^{\frac{\frac{a}{\sigma}-1}{2\overline{p}}-q}}\Big)+\Big(1+\dfrac{1}{x^{\frac{\frac{a}{\sigma}-1}{2q_1\overline{p}}-q}}\Big)\Big(\frac{1}{x^{2q}}+\frac{1}{x^q}\Big)\Big\}\notag\\
	&\leq C\Delta_n^{4q}\big(\dfrac{1}{x^{q}}+\dfrac{1}{x^{(\frac{a}{\sigma}-1)(\frac{1}{2\overline{p}}+\frac{1}{2q_2\overline{q}})+3q}}\big),
	\end{align}
	where $\overline{p}=\frac{21q+4+3\sqrt{q(49q+8)}}{2(11q+2+\sqrt{q(49q+8)})}$, $\overline{q}=\frac{\overline{p}}{\overline{p}-1}=\frac{21q+4+3\sqrt{q(49q+8)}}{-q+\sqrt{q(49q+8)}}$, $q_1=\frac{11q+2+\sqrt{q(49q+8)}}{2(6q+1)}$, $q_1\overline{p}=\frac{21q+4+3\sqrt{q(49q+8)}}{4(6q+1)}$, $q_2>1$ with $\frac{q_2}{q_2-1}$ close to $1$. When $q\in [1,\frac{13+\sqrt{89}}{20}]$ and $\frac{a}{\sigma}>\frac{11+\sqrt{89}}{2}$, it suffices to choose $q_2$ large enough to get $(\frac{a}{\sigma}-1)(\frac{1}{2\overline{p}}+\frac{1}{2q_2\overline{q}})+3q<\frac{a}{\sigma}-1$ since $\overline{p}>1.04$. 	
	
	Finally, we treat the term $R_3^{a,b}$. Using H\"older's inequality with $\frac{1}{p_3}+\frac{1}{q_3}=1$,
	\begin{align*}
	&\widetilde{\E}_{t_k,x}^{a,b}[\vert R_3^{a,b}\vert^{2q}]\leq \Delta_n^{2q-1}\int_{t_k}^{t_{k+1}}\widetilde{\E}_{t_k,x}^{a,b}\Big[\Big\vert \int_{s}^{t_{k+1}}D_s\Big(\dfrac{1}{\partial_xY_{r}^{a,b}(t_k,x)}\Big)dr\frac{\partial_{x}Y_{s}^{a,b}(t_k,x)}{\sqrt{2\sigma Y_s^{a,b}(t_k,x)}}\Big\vert^{2q}\Big]ds\\
	&\leq \Delta_n^{2q-1}\int_{t_k}^{t_{k+1}}\Big(\widetilde{\E}_{t_k,x}^{a,b}\Big[\Big\vert \int_{s}^{t_{k+1}}D_s\Big(\dfrac{1}{\partial_xY_{r}^{a,b}(t_k,x)}\Big)dr\Big\vert^{2qp_3}\Big]\Big)^{\frac{1}{p_3}}\\
	&\qquad\times\Big(\widetilde{\E}_{t_k,x}^{a,b}\Big[\Big\vert \frac{\partial_{x}Y_{s}^{a,b}(t_k,x)}{\sqrt{2\sigma Y_s^{a,b}(t_k,x)}}\Big\vert^{2qq_3}\Big]\Big)^{\frac{1}{q_3}}ds\\
	&\leq \Delta_n^{2q-1}\int_{t_k}^{t_{k+1}}\Big(\Delta_n^{2qp_3-1} \int_{s}^{t_{k+1}}\widetilde{\E}_{t_k,x}^{a,b}\Big[\Big\vert D_s\Big(\dfrac{1}{\partial_xY_{r}^{a,b}(t_k,x)}\Big)\Big\vert^{2qp_3}\Big]dr\Big)^{\frac{1}{p_3}}\\
	&\qquad\times\Big(\widetilde{\E}_{t_k,x}^{a,b}\Big[\Big\vert \frac{\partial_{x}Y_{s}^{a,b}(t_k,x)}{\sqrt{2\sigma Y_s^{a,b}(t_k,x)}}\Big\vert^{2qq_3}\Big]\Big)^{\frac{1}{q_3}}ds.
	\end{align*}
	By using \eqref{Mallinveflow1} and the same computations as in the proof of \eqref{dmif1} with $p=2qp_3$, we get
	\begin{align*}
	&\widetilde{\E}_{t_k,x}^{a,b}\Big[\Big\vert D_s\Big(\dfrac{1}{\partial_xY_{r}^{a,b}(t_k,x)}\Big)\Big\vert^{2qp_3}\Big]=\widetilde{\E}_{t_k,x}^{a,b}\Big[\Big\vert \frac{D_s(\partial_xY_r^{a,b}(t_k,x))}{(\partial_xY_r^{a,b}(t_k,x))^2}\Big\vert^{2qp_3}\Big]\leq C\Big(1+\dfrac{1}{x^{\frac{\frac{a}{\sigma}-1}{2p_4}-qp_3}}\Big)\dfrac{1}{x^{qp_3}}\\
	&\qquad+C\Delta_n^{2qp_3}\Big(1+\dfrac{1}{x^{\frac{\frac{a}{\sigma}-1}{2p_4}-qp_3}}\Big)\dfrac{1}{x^{3qp_3}}+C\Delta_n^{qp_3}\Big(1+\dfrac{1}{x^{\frac{\frac{a}{\sigma}-1}{2p_4}-qp_3}}\Big)\dfrac{1}{x^{2qp_3}},
	\end{align*}
	where $p_4$ and $q_4$ satisfying $\frac{1}{p_4}+\frac{1}{q_4}=1$ are given in the proof of \eqref{dmif1}. Next, using H\"older's inequality with $\frac{1}{p_5}+\frac{1}{q_5}=1$, \eqref{e2} and \eqref{e4},
	\begin{align*}
	\widetilde{\E}_{t_k,x}^{a,b}\Big[\Big\vert \frac{\partial_{x}Y_{s}^{a,b}(t_k,x)}{\sqrt{2\sigma Y_s^{a,b}(t_k,x)}}\Big\vert^{2qq_3}\Big]&\leq C\Big(\widetilde{\E}_{t_k,x}^{a,b}\Big[\vert \partial_{x}Y_{s}^{a,b}(t_k,x)\vert^{2qq_3p_5}\Big]\Big)^{\frac{1}{p_5}}\Big(\widetilde{\E}_{t_k,x}^{a,b}\Big[\frac{1}{\vert Y_s^{a,b}(t_k,x)\vert^{qq_3q_5}}\Big]\Big)^{\frac{1}{q_5}}\\
	&\leq C\Big(1+\dfrac{1}{x^{\frac{\frac{a}{\sigma}-1}{2p_5}+qq_3}}\Big)\frac{1}{x^{qq_3}}.
	\end{align*}
	Here, $q_5$ should be chosen close to $1$ in order that $qq_3q_5<\dfrac{a}{\sigma}-1$. In order to apply \eqref{e2} and \eqref{e4} to estimate the term $R_{3}^{a,b}$, all conditions required here are as follows
	\begin{align*}
	-2qp_3p_4\geq -\frac{(\frac{a}{\sigma}-1)^2}{2(\frac{a}{\sigma}-\frac{1}{2})},\;
	3qp_3q_4<\dfrac{a}{\sigma}-1,\;
	qq_3<\dfrac{a}{\sigma}-1.
	\end{align*}
	This implies that
	\begin{align*}
	\frac{a}{\sigma}\geq 2qp_3p_4+\sqrt{2qp_3p_4\left(2qp_3p_4+1\right)}+1,\;
	\frac{a}{\sigma}>\frac{3qp_3p_4}{p_4-1}+1,\;
	\frac{a}{\sigma}>\frac{qp_3}{p_3-1}+1.
	\end{align*}	
	Here, the optimal choice for $p_3$ and $p_4$ corresponds to choose them in a way which gives minimal restrictions on the ratio $\frac{a}{\sigma}$. That is,
	$$
	2qp_3p_4+\sqrt{2qp_3p_4\left(2qp_3p_4+1\right)}=\frac{3qp_3p_4}{p_4-1}=\frac{qp_3}{p_3-1}.
	$$	
	Thus, the unique solution is given by $p_3=\frac{2\sqrt{1+8q}+4\sqrt{2q}}{2\sqrt{1+8q}+3\sqrt{2q}}$ and $p_4=1+\frac{3}{2}\sqrt{\frac{2q}{1+8q}}$, which implies that
	$\frac{a}{\sigma}>4q+1+\sqrt{2q(8q+1)}$. Therefore, under $\frac{a}{\sigma}>4q+1+\sqrt{2q(8q+1)}$, we obtain
	\begin{align}\label{r3}
	&\widetilde{\E}_{t_k,x}^{a,b}[\vert R_3^{a,b}\vert^{2q}]\leq  C\Delta_n^{2q}\Big\{\Delta_n^{2q}\Big(1+\dfrac{1}{x^{\frac{\frac{a}{\sigma}-1}{2p_3p_4}-q}}\Big)\dfrac{1}{x^q}+\Delta_n^{4q}\Big(1+\dfrac{1}{x^{\frac{\frac{a}{\sigma}-1}{2p_3p_4}-q}}\Big)\dfrac{1}{x^{3q}}\notag\\
	&\qquad+\Delta_n^{3q}\Big(1+\dfrac{1}{x^{\frac{\frac{a}{\sigma}-1}{2p_3p_4}-q}}\Big)\dfrac{1}{x^{2q}}\Big\}\Big(1+\dfrac{1}{x^{\frac{\frac{a}{\sigma}-1}{2p_5q_3}+q}}\Big)\frac{1}{x^q}\leq C\Delta_n^{4q}\Big(1+\dfrac{1}{x^{\frac{\frac{a}{\sigma}-1}{2p_5q_3}+q}}\Big)\Big(1+\dfrac{1}{x^{\frac{\frac{a}{\sigma}-1}{2p_3p_4}-q}}\Big)\notag\\
	&\qquad\times\Big(\dfrac{1}{x^{2q}}+\dfrac{1}{x^{4q}}+\dfrac{1}{x^{3q}}\Big)\leq C\Delta_n^{4q}\big(\dfrac{1}{x^{2q}}+\dfrac{1}{x^{(\frac{a}{\sigma}-1)(\frac{1}{2p_3p_4}+\frac{1}{2p_5q_3})+4q}}\big),
	\end{align}
	where $p_3=\frac{2\sqrt{1+8q}+4\sqrt{2q}}{2\sqrt{1+8q}+3\sqrt{2q}}$, $q_3=\frac{p_3}{p_3-1}=4+\sqrt{2(8+\frac{1}{q})}$, $p_4=1+\frac{3}{2}\sqrt{\frac{2q}{1+8q}}$, $p_3p_4=1+2\sqrt{\frac{2q}{1+8q}}$, $p_5>1$ with $\frac{p_5}{p_5-1}$ close to $1$. Then, when $q\in [1,\frac{13+\sqrt{89}}{20}]$ and $\frac{a}{\sigma}>\frac{11+\sqrt{89}}{2}$, it suffices to choose $p_5$ is large enough to get $(\frac{a}{\sigma}-1)(\frac{1}{2p_3p_4}+\frac{1}{2p_5q_3})+4q<\frac{a}{\sigma}-1$ since $p_3p_4>1.89$. 	
	
	From \eqref{r}, \eqref{r1}, \eqref{r2} and \eqref{r3}, under condition $\frac{a}{\sigma}>4q+1+\sqrt{2q(8q+1)}$, we obtain 
	\begin{align*}
	&\widetilde{\E}_{t_k,x}^{a,b}[\vert R_1^{a,b}+R_2^{a,b}+R_3^{a,b}\vert^{2q}]\leq C\Delta_n^{4q}\big(\dfrac{1}{x^{q}}+\dfrac{1}{x^{\frac{\frac{a}{\sigma}-1}{2p_0}+4q}}+\dfrac{1}{x^{(\frac{a}{\sigma}-1)(\frac{1}{2\overline{p}}+\frac{1}{2q_2\overline{q}})+3q}}\\
	&\qquad+\dfrac{1}{x^{(\frac{a}{\sigma}-1)(\frac{1}{2p_3p_4}+\frac{1}{2p_5q_3})+4q}}\big).
	\end{align*}
	Moreover, when $q\in [1,\frac{13+\sqrt{89}}{20}]$ and condition {\bf(A)} holds, we obtain
    \begin{align*}
	\widetilde{\E}_{t_k,x}^{a,b}\big[\big\vert R_1^{a,b}+R_2^{a,b}+R_3^{a,b}\big\vert^{2q}\big]\leq C\Delta_n^{4q}\big(\frac{1}{x^q}+\frac{1}{x^{\frac{a}{\sigma}-1}}\big).
    \end{align*}	
    Thus, we conclude the desired estimate \eqref{es2}.
	
	\vskip 5pt
	{\it Proof of \eqref{es4}.} We proceed in the same way as in the proof of \eqref{es2} where we use 
	\begin{equation*}\begin{split}
	\frac{Y_{s}^{a,b}(t_k,x)}{\partial_{x}Y_{s}^{a,b}(t_k,x)}-\frac{Y_{t_k}^{a,b}(t_k,x)}{\partial_{x}Y_{t_k}^{a,b}(t_k,x)}&=a\int_{t_k}^s\frac{du}{\partial_{x}Y_{u}^{a,b}(t_k,x)}+\sqrt{\frac{\sigma }{2}}\int_{t_k}^s\frac{\sqrt{Y_{u}^{a,b}(t_k,x)}}{\partial_{x}Y_{u}^{a,b}(t_k,x)}dW_u
	\end{split}
	\end{equation*}	
	instead of \eqref{increa}.
\end{proof}
\begin{remark}\label{minimal}	
	When we use Cauchy-Schwarz's inequality instead of H\"older's inequality to estimate $\vert R_1^{a,b}+R_2^{a,b}+R_3^{a,b}\vert^{2q}$, the required condition will be $\frac{a}{\sigma}>8q+1+\sqrt{8q(8q+1)}$ which is actually bigger than $4q+1+\sqrt{2q(8q+1)}$.
\end{remark}

\section{Appendix B: Useful results}
We recall a classical convergence result and a central limit theorem on triangular arrays of random variables. For each $n\in\mathbb{N}$, let $(\zeta_{k,n})_{k\geq 1}$ be a sequence of random variables defined on $(\Omega, \mathcal{F}, \{\mathcal{F}_t\}_{t\geq 0}, \P)$, and we assume that $\zeta_{k,n}$ are $\mathcal{F}_{t_{k+1}}$-measurable for all $k$.
\begin{lemma}\label{zero} \textnormal{\cite[Lemma 3.4]{J12}} Assume that as $n  \rightarrow \infty$,  
	\begin{equation*} 
	\textnormal{(i)}\;  \sum_{k=0}^{n-1}\E\left[\zeta_{k,n}\vert \mathcal{F}_{t_k}\right] \overset{\P}{\longrightarrow} 0, \quad \text{ and } \quad \textnormal{(ii)} \,  \sum_{k=0}^{n-1}\E\left[\zeta_{k,n}^2\vert \mathcal{F}_{t_k} \right]\overset{\P}{\longrightarrow} 0.
	\end{equation*}
	Then as $n  \rightarrow \infty$, 
	$
	\sum_{k=0}^{n-1}\zeta_{k,n}\overset{\P}{\longrightarrow} 0.
	$
\end{lemma}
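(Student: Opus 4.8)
The plan is to reduce the statement to a classical martingale-array argument. First I would use the orthogonal decomposition $\zeta_{k,n}=\chi_{k,n}+\E[\zeta_{k,n}\mid\mathcal{F}_{t_k}]$ with $\chi_{k,n}:=\zeta_{k,n}-\E[\zeta_{k,n}\mid\mathcal{F}_{t_k}]$. By hypothesis (i), $\sum_{k=0}^{n-1}\E[\zeta_{k,n}\mid\mathcal{F}_{t_k}]\overset{\P}{\longrightarrow}0$, so it suffices to prove $\sum_{k=0}^{n-1}\chi_{k,n}\overset{\P}{\longrightarrow}0$. Since $\E[\chi_{k,n}\mid\mathcal{F}_{t_k}]=0$ and $\chi_{k,n}$ is $\mathcal{F}_{t_{k+1}}$-measurable, the partial sums $N^n_j:=\sum_{k=0}^{j}\chi_{k,n}$ ($j\in\{0,\dots,n-1\}$) form a discrete-time martingale relative to $(\mathcal{F}_{t_{j+1}})_j$, with predictable quadratic variation $\langle N^n\rangle_j:=\sum_{k=0}^{j}\E[\chi_{k,n}^2\mid\mathcal{F}_{t_k}]$. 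Because $\E[\chi_{k,n}^2\mid\mathcal{F}_{t_k}]=\E[\zeta_{k,n}^2\mid\mathcal{F}_{t_k}]-(\E[\zeta_{k,n}\mid\mathcal{F}_{t_k}])^2\le\E[\zeta_{k,n}^2\mid\mathcal{F}_{t_k}]$, hypothesis (ii) forces $\langle N^n\rangle_{n-1}\overset{\P}{\longrightarrow}0$.

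Next I would invoke the Lenglart--Rebolledo domination inequality for the nonnegative submartingale $((N^n_j)^2)_j$, which is dominated by the predictable increasing process $\langle N^n\rangle$ in the sense that $\E[(N^n_\tau)^2]=\E[\langle N^n\rangle_\tau]$ for every bounded stopping time $\tau$ (apply optional stopping to the martingale $(N^n_j)^2-\langle N^n\rangle_j$). This gives, for all $\varepsilon>0$ and $\eta>0$,
$$
\P\!\left(\Big|\sum_{k=0}^{n-1}\chi_{k,n}\Big|\ge\varepsilon\right)\le\P\!\left(\max_{0\le j\le n-1}|N^n_j|\ge\varepsilon\right)\le\frac{\eta}{\varepsilon^2}+\P\!\left(\langle N^n\rangle_{n-1}\ge\eta\right).
$$
Given $\varepsilon>0$, one first chooses $\eta$ small enough that $\eta/\varepsilon^2$ falls below a prescribed level, and then lets $n\to\infty$ so that $\P(\langle N^n\rangle_{n-1}\ge\eta)\to0$ by the previous paragraph; hence $\sum_{k=0}^{n-1}\chi_{k,n}\overset{\P}{\longrightarrow}0$, and adding back the conditional-expectation term yields $\sum_{k=0}^{n-1}\zeta_{k,n}\overset{\P}{\longrightarrow}0$.

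I do not anticipate a genuine obstacle: the only delicate point is passing from ``(ii) holds merely in probability'' to convergence in probability of the martingale $N^n$, and this is exactly what the Lenglart inequality is designed to handle. An elementary alternative is to truncate at the stopping time $\inf\{j:\langle N^n\rangle_j>\eta\}$ and combine Chebyshev's inequality with $\E[(N^n_{\cdot\wedge\tau})^2]=\E[\langle N^n\rangle_{\cdot\wedge\tau}]$, but one then has to control the overshoot of the last increment of $\langle N^n\rangle$, which Lenglart's formulation absorbs automatically thanks to the predictability of $\langle N^n\rangle$. The measurability bookkeeping (that $(\chi_{k,n})_k$ is a martingale-difference array and $\langle N^n\rangle$ is predictable for the shifted filtration) is routine, and since the result is quoted from \cite[Lemma~3.4]{J12}, it suffices in the paper to record this short argument or to refer to that reference.
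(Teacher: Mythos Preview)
Your argument is correct and is the standard proof of this fact. Note, however, that the paper does not give its own proof of this lemma: it is simply quoted from \cite[Lemma~3.4]{J12} and used as a black box, so there is no paper-proof to compare against. Your decomposition into the predictable part $\sum_k\E[\zeta_{k,n}\mid\mathcal{F}_{t_k}]$ plus the martingale-difference array $\sum_k\chi_{k,n}$, followed by Lenglart's domination inequality applied to $(N^n)^2$ and its predictable compensator $\langle N^n\rangle$, is exactly the classical route (and is essentially what lies behind the cited reference). Your final remark that one could equally well refer to \cite{J12} is precisely what the paper does.
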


\begin{theorem}\label{clt} \textnormal{\cite[Lemma 3.6]{J12}} 
	Assume that there exist real numbers $M$ and $V>0$ such that as $n  \rightarrow \infty$, 
	\begin{equation*}\begin{split}
	&\textnormal{(i)}\; \sum_{k=0}^{n-1}\E\left[\zeta_{k,n}\vert \mathcal{F}_{t_k}\right] \overset{\P}{\longrightarrow} M, \qquad \textnormal{(ii)}\; \sum_{k=0}^{n-1}\left(\E\left[\zeta_{k,n}^2\vert \mathcal{F}_{t_k} \right]-\left(\E\left[\zeta_{k,n}\vert \mathcal{F}_{t_k}\right]\right)^2\right)\overset{\P}{\longrightarrow} V, \text{ and }\\
	&\textnormal{(iii)}\; \sum_{k=0}^{n-1}\E\left[\zeta_{k,n}^4\vert \mathcal{F}_{t_k}\right] \overset{\P}{\longrightarrow} 0.
	\end{split}
	\end{equation*}
	Then as $n  \rightarrow \infty$,  $\sum_{k=0}^{n-1}\zeta_{k,n}\overset{\mathcal{L}(\P)}{\longrightarrow} \mathcal{N}+M$, where $\mathcal{N}$ is a centered  Gaussian random variable with variance $V$.
\end{theorem}

\begin{theorem}\label{ruleproNualart}\textnormal{\cite[Proposition 1.3.3]{N}} Let $F\in \mathbb{D}^{1,2}$ and $u$ be in the domain of $\delta$ such that $Fu\in L^2(\Omega;H)$. Then $Fu$ belongs to the domain of $\delta$ and the following equality is true 
\begin{equation}\label{ruleproduct}
\delta(Fu)=F\delta(u)-\left<DF,u\right>_H,
\end{equation} 
provided the right-hand side of \eqref{ruleproduct}	is square integrable.
\end{theorem}

\begin{theorem}\label{existenceMallideri}\textnormal{\cite[Lemma 2.1]{LNN03}} Let $\varphi\in \mathcal{C}^1(\mathbb{R})$ be a continuously differentiable function and let $F\in \mathbb{D}^{1,2}$. Then $\varphi(F)\in \mathbb{D}^{1,2}$ if and only if $\varphi(F)\in L^2(\Omega)$ and $\varphi'(F)DF\in L^2(\Omega\times [0,T])$, and under these hypotheses 
	$$
	D(\varphi(F))=\varphi'(F)DF.
	$$
\end{theorem}

\noindent{\bf Acknowledgements.} We would like to thank anonymous referees and the editor for their valuable comments that helped us to improve the paper.

\end{document}